\newcommand{\ptd}{\mathcal{P}^2(\td)}
\title{Lattice approximations of the first-order mean field type differential games}
\author{Yurii Averboukh\footnote{Krasovskii Institute of Mathematics and Mechanics\ \ \email{ayv@imm.uran.ru}}{ }\footnote{Ural Federal University}}
\date{}
\begin{document}
	\maketitle
\begin{abstract}
The theory of first-order mean field type differential games examines the systems of infinitely many identical agents interacting via some external media under assumption that each agent is controlled by two players. We study the approximations of the value function of the first-order mean field type differential game using solutions of model finite-dimensional differential games.
The model game appears as a mean field type continuous time Markov game, i.e., the game theoretical problem with the infinitely many agents and dynamics of each agent determined by a controlled finite state nonlinear Markov chain. Given a supersolution (resp. subsolution) of the Hamilton-Jacobi equation for the model game, we construct a suboptimal strategy of the first (resp. second) player and evaluate the approximation accuracy using the modulus of continuity of the reward function and the distance between the original and model games. This gives the approximations of the value function of the mean field type differential game by  values  of the finite-dimensional differential games. Furthermore, we present the way to build a finite-dimensional differential game that approximates the original game with a given accuracy.
\msccode{49L25, 49J45, 49J53, 49N70}
\keywords{mean field type differential games, approximate solutions, suboptimal strategies, viscosity solutions, extremal shift rule}
\end{abstract}
\section{Introduction}
The mean field type differential games are mathematical models of systems consisting of many agents those are influenced by two player. The main feature of the mean field approach is the assumption that the agents interact via some external media, i.e., the dynamics of each agent depends only on time, his/her state,  controls produced by the players and the distribution of all agents. In the paper, we study the zero-sum first-order mean field type differential games. This means that the agents' dynamics is given by the ordinary differential equation, when the players' purposes are opposite. 

Notice that the theory of mean field type differential games comprises the mean field type control problems. In fact, one can regard the  mean field type control problem  as the mean field type differential games with the fictions second player. It is worth to mention that the mean filed type control theory is a counterpart of the mean field game theory introduced by Lasry and Lions~\cite{Lions01},~\cite{Lions02} and (independently) by Huang, Malham\'{e},  Caines~\cite{Huang5}. However, the mean field game theory implies that each agent is a player and tries to optimize his/her own payoff.

The mean field type  control theory started with  paper~\cite{ahmed_ding_controlld}. Nowadays, for the mean field type control system  such methods as dynamic programming and analogs of Pontryagin maxim principle are developed~\cite{Andersson_Djehiche_2011},~\cite{Bayraktar_Cosso_Pham_randomized},~\cite{Bensoussan_Frehse_Yam_book},~\cite{Buckdahn_Boualem_Li_PMP_SDE},~\cite{Carmona_Delarue_PMP},~\cite{Lauriere_Pironneau_DPP_MF_control},~\cite{Pham_Wei_2015_DPP_2016},~\cite{Pham_Wei_2015_Bellman}. Notice that the first-order mean field type control problems are more challenging due to the possible crowding of the agents. They were examined from the dynamic programming point of view in~\cite{Marigonda_Cavagnari},~\cite{Marigonda_et_al_2015},~\cite{Jimenez_Marigonda_Quincampoix}. 

First, the mean field type differential games was analyzed in~\cite{Djehiche_Hamdine}. In that the paper it was assumed that the dynamics of each agent is given by a  non-degenerate stochastic differential equation. The main result is the existence of the value function in the class of progressively measurable strategies. Papers ~\cite{Cosso_Pham},~\cite{Moon_Basar} examine the mean field type differential games using nonaticipative strategies. There is proved the existence  theorem for the value function. The feedback approach to the mean field type differential games is developed in~\cite{Averbokh_mfdg}. That paper extends the Krasovskii-Subbotin extremal shift rule   to the mean-field type control system. Furthermore, the existence of the value function is obtained using so called programmed iteration method. The infinitesimal characterization of the value function is derived in~\cite{Averboukh_dirderivative} and~\cite{Cosso_Pham}. Paper~\cite{Cosso_Pham} shows that the value function of the mean field type differential game is the viscosity solution of the corresponding Hamilton-Jacobi equation in the space of probability measures. The notion of viscosity solution used there is based on embedding of the set of probability measures into the space of square integrable random variables. The characterization of the value function of the mean field type differential game via minimax inequalities on  direction derivatives was obtained in~\cite{Averboukh_dirderivative}.

The aforementioned  results are counterparts of the finite-dimensional differential games theory. However, the mean field type control theory raises its own questions. One of these questions is a finite dimensional approximations. Recall that the mean field type control theory implies that the phase space of the whole system is  the Wasserstein space that is only metric, when the corresponding Hamilton-Jacobi equation involves the derivatives with respect to probability (various approaches to this problem can be found in \cite{Cosso_Pham}, \cite{Marigonda_Quincampoix}, \cite{Pham_Wei_2015_DPP_2016}, \cite{Pham_Wei_2015_Bellman}). 

At the same time, the  finite-dimensional differential games are  simpler and well-studied~\cite{bardi},\cite{NN_PDG_en},~\cite{Subb_book}. In particular,  they are  reduced to the  finite dimensional Hamilton-Jacobi equations. Thus, it is tempting to approximate a mean field type control system by finite dimensional systems and construct finite-dimensional approximations of the value function. Since the mean field type control system implies the infinitely many agents, the most natural approximation arises if we consider finite agent system assuming that the probability in the right-hand side of the dynamics is determined by the empirical measure.  This approach was realized in~\cite{Gangbo_Mayorga_Swiek},~\cite{Lacker_McKean_Vlasov}.  There, it is shown that solutions of the control problems with finite  number of agents converge to the solution of the limiting mean field control problem when the number of agents tends to infinity. This result, in particular, shows that the mean field type control theory is an appropriate model for systems with large number of agents. However, the aforementioned papers do not provide the approximation rate.

We develop a quite different approach. We examine a mean field type differential game assuming, for simplicity, that the phase space of each agent is the flat torus and approximate it  by the finite-state mean field type Markov game, i.e.,  the approximative system consists now of infinitely many identical agents with dynamics of each agent given by the controlled continuous-time Markov chain defined on some lattice.  It is well known that the dynamics of the distribution of agents in the Markov chain obeys the Kolmogorov equation which, for the mean field type case, is a system of nonlinear ODEs. Thus, the approximative problem turns to be a finite-dimensional differential game. Notice that, if the agents' state space is  Euclidean, such approximation can be performed using a lattice contained in a sufficiently large ball. 

The main result of the paper (see Theorem~\ref{th:existence_strategy}) states that, if the dynamics of each agent according to the Markov chain approximates the original deterministic evolution, then the value function of the finite-state  mean field type Markov game provides the approximation of the value function of the original mean field type differential game. Moreover, the approximation accuracy can be estimated by the modulus of continuity of the reward function and distance between the original and approximative systems. The proof is constructive, i.e., given a supersolution (respectively, subsolution) of the Hamilton-Jacobi equation corresponding to the mean field type Markov game, we construct a suboptimal strategy of the first (respectively, second) player in the original mean field type differential game providing the desired approximation accuracy. Furthermore,  one can always build at least one approximative system defined on a regular lattice (see Section~\ref{sect:example_construction}).

The paper is organized as follows. In Section~\ref{sect:preliminaries}, we introduce the general notation used throughout the paper (see Subsection~\ref{sub:sect:notation}) and recall  the feedback approach to the mean field type differential games first developed in \cite{Averbokh_mfdg} (see Subsection~\ref{sub:sect:MFTDG}). Section~\ref{sect:finite_state_games} is concerned with the finite-state mean field type Markov games. Here, we introduce this object and, utilizing the relaxed feedback strategies, reduce such  game theoretical problems to a finite-dimensional differential games defined on a simplex. The main result of the paper is formulated in Section~\ref{sect:result}. Recall that it states that, given a mean filed type Markov game that approximates the original one and the supersolution (respectively, subsolution) of corresponding finite-dimensional Hamilton-Jacobi equation, one can find a subsoptimal strategy of the first (respectively, second) player in the original game. A construction of the mean-field type Markov game that approximates the original game with given accuracy  is presented in Section~\ref{sect:example_construction}. The rest of the paper is devoted to the proof of the main result. In Section~\ref{sect:properties}, we give the properties of  the original and model systems those are utilized in the proof of the main theorem. We introduce the desired subotimal strategy with memory in Section~\ref{sect:construction_suboptimal}. Its construction relies on the extremal shift rule first proposed by Krasovskii and Subbotin for finite-dimensional differential games~\cite{NN_PDG_en}.  Finally, proof of the main the theorem is given in Section~\ref{sect:extremal_shift}.

\section{Preliminaries}\label{sect:preliminaries}

\subsection{General notation}\label{sub:sect:notation}
\begin{itemize}
	\item If $X_1,\ldots,X_n$ are the set, $i_1,\ldots,i_k$ are distinct indexes, then
	$\operatorname{p}^{i_1,\ldots,i_k}:X_1\times \ldots\times X_{n}\rightarrow X_{i_1}\times \ldots\times X_{i_k}$ acts by the rule
	$$\operatorname{p}^{i_1,\ldots,i_k}(x_1,\ldots,x_n)\triangleq (x_{i_1},\ldots,x_{i_k}). $$
	\item Let $(\Omega',\mathcal{F}')$ and  $(\Omega'',\mathcal{F}'')$ be measurable spaces, $m$ be a measure on $\mathcal{F}'$, $h:\Omega'\rightarrow\Omega''$ be a measurable function. Denote by $h\sharp m$ the push-forward measure defined by the rule: for $\Upsilon\in \mathcal{F}''$,
	$$(h\sharp m)(\Upsilon)\triangleq m(h^{-1}(\Upsilon)). $$
	\item If  $(X,\rho_X)$, $(Y,\rho_Y)$ are metric spaces, then $C(X,Y)$ denotes the set of continuous functions from $X$ to $Y$. We write $C(X)$ for $C(X,\mathbb{R})$.
	\item $C_b(X)$ stands for the set of bounded continuous functions from $X$ to $\mathbb{R}$.
	\item Denote by $\mathcal{M}^+(X)$ the set of (nonnegative) measures on $X$. We endow $\mathcal{M}^+(X)$ with the topology of narrow convergence: a sequence $\{m_n\}_{n=1}^\infty\subset\mathcal{M}^+(X)$ narrowly converges to $m\in\mathcal{M}^+(X)$ if, for any $\phi\in C_b(X)$, \[\int_X\phi(x)m_n(dx)\rightarrow\int_X\phi(x)m(dx)\text{ as }n\rightarrow\infty.\].
	\item  $\mathcal{P}(X)$ is the set of probability measures, i.e.,
	$$\mathcal{P}(X)\triangleq \{m\in\mathcal{M}^+(X):m(X)=1\}. $$
	\item We denote the Dirac measure concentrated at $z\in X$ by $\delta_z$, i.e., for a Borel set $\Upsilon$,
	$$\delta_z(\Upsilon)\triangleq\left\{\begin{array}{cc}
	1, & z\in \Upsilon, \\
	0, & z\notin \Upsilon.
	\end{array}\right. $$
	\item For $p\geq 1$, let $\mathcal{P}^p(X)$ stand for the set of probability measures with finite $p$-th moment i.e.
	$m\in\mathcal{P}(X)$ lies in  $\mathcal{P}^p(X)$, if, for some (equivalently, every) $x_0\in X$,
	$$\int_X(\rho_X(x,x_0))^pm(dx)<\infty. $$
	\item We consider on $\mathcal{P}^p(X)$ the $p$-Wasserstein distance defined as follows:
	$$W_p(m_1,m_2)\triangleq \left[\inf_{\pi\in\Pi(m_1,m_2)}\int_{X\times X} (\rho_X(x_1,x_2))^p\pi(d(x_1,x_2))\right]^{1/p}. $$ Here $\Pi(m_1,m_2)$ is the set of probabilities on $X\times X$ such that $\operatorname{p}^i\sharp\pi=m_i$, $i=1,2$. Notice that $\mathcal{P}^p(X)$ endowed with the metric $W_p$ is Polish \cite{Ambrosio}. If, additionally, $X$ is compact, then $\mathcal{P}^p(X)$ is also compact \cite{Ambrosio}.
	In the paper we primary consider the quadratic case, i.e., $p=2$.
	\item Given $m\in\mathcal{M}^+(X)$, let $\mathcal{M}^+_{\text{product}}(X,m,Y)$ be a set of measures on $X\times Y$ compatible with $m$, i.e., $\alpha\in\mathcal{M}^+_{\text{product}}(X,m,Y)$ if $\alpha\in\mathcal{M}^+(X\times Y)$ and $\operatorname{p}^1\sharp\alpha=m$.
	The set $\mathcal{M}^+_{\text{product}}(X,m,Y)$ inherits the topology of narrow convergence from $\mathcal{M}^+(X\times Y)$. Notice that $\mathcal{M}^+_{\text{product}}(X,m,Y)$ is compact provided that both $X$ and $Y$ are compact. 
	\item The function $\gamma:X\rightarrow \mathcal{P}(Y)$ is called weakly measurable if, for every $\phi\in C_b(X\times Y)$, the function $x\mapsto \int_Y\phi(x,y)\gamma(x,dy)$ is measurable. If $m$ is a measure on $X$, $\gamma:X\rightarrow \mathcal{P}(X)$ is weakly measurable, then define the measure $m\star\gamma$ by the rule: for $\varphi\in C_b(X\times Y)$,
	\begin{equation*}\label{equality:m_star_xi}\int_{X\times Y}\phi(x,y)(m\star\gamma)(d(x,y))=
	\int_{X}\int_Y\phi(x,y)\gamma(x,dy)m(dx).
	\end{equation*} Obviously, $m\star\gamma\in\mathcal{M}^+_\text{product}(X,m,Y)$.
	\item  The disintegration theorem \cite{Ambrosio} states that, given $\alpha\in\mathcal{M}^+_{\text{product}}(X,m,Y)$, one can construct a weakly measurable function $\gamma:X\rightarrow\mathcal{P}(Y)$ such that $\alpha=m\star\gamma$. If $\gamma$ is a disintegration of $\alpha$, we denote \[\alpha(dy|x)\triangleq \gamma(x,dy).\] The disintegration is unique $m$-a.e. 
\end{itemize}

\subsection{Mean field type differential game}\label{sub:sect:MFTDG}
The main object of the paper is the  infinite system of identical agents with the dynamics governed by ordinary differential equations influenced by two players with opposite purposes. We assume the mean field interaction between the agents.  For simplicity, let the   phase space of each agent be $\td\triangleq\rd/\mathbb{Z}^d$. An element of $\td$ is a  set $x=\{x'+n:n\in\mathbb{Z}^d\}$. We denote the metric on $\td$ by $\|x-y\|$: \[\|x-y\|=\min\{\|x'-y'\|:x'\in x,\ \ y'\in y\}.\] 


Let $\mathcal{C}_{s,r}\triangleq C([s,r],\td)$ be the set of all trajectories on $[s,r]$. If $x(\cdot),y(\cdot)\in \mathcal{C}_{s,r}$, then distance between $x(\cdot),y(\cdot)$ is denoted (with some abuse of notation) by $\|x(\cdot)-y(\cdot)\|$
$$\|x(\cdot)-y(\cdot)\|\triangleq \sup_{t\in [s,r]}\|x(t)-y(t)\|.$$ Furthermore, if $t\in [s,r]$, then denote by $e_t$ the evaluation operator from $\mathcal{C}_{s,r}$ to $\td$ acting by the rule
$$e_t(x(\cdot))\triangleq x(t). $$

Let the dynamics of each agent be given by
\begin{equation}\label{sys:agent_mfdg}
\begin{split}
\frac{d}{dt}x(t)=f(&t,x(t),m(t),u(t),v(t)),\\ &t\in [0,T],\ \ x(t)\in \td,\ \ m(t)\in\mathcal{P}^2(\td),\ \ u(t)\in U,\ \ v(t)\in V.
\end{split}
\end{equation} 

It is assumed that $u(t)$ and $v(t)$ are control of the first and second players respectively. The concept of mean filed type control processes implies that agents are influenced by the players independently. 

We assume that the first (respectively, second) player tries to minimize (respectively, maximize) the value
$$g(m(T)). $$ The following conditions are imposed:
\begin{itemize}
	\item the sets $U$ and $V$ are metric compacts;
	\item the functions $f$ and $g$ are continuous;
	\item the function $f$ is Lipschitz continuous with respect to $x$ and $m$, i.e., there exists a constant $L>0$ such that, for any $t\in [0,T]$, $x_1,x_2\in\td$, $m_1,m_2\in\ptd$, $u\in U$, $v\in V$,
	\begin{equation}\label{intro:L}
	\|f(t,x_1,m_1,u,v)-f(t,x_2,m_2,u,v)\|\leq L(\|x_1-x_2\|+W_2(m_1,m_2)); 
	\end{equation}
	\item (Isaacs' condition) for every $t\in [0,T]$, $x\in \td$, $m\in\ptd$, and $w\in\rd$,
	$$\min_{u\in U}\max_{v\in V}\langle w,f(t,x,m,u,v)\rangle=\max_{v\in V}\min_{u\in U}\langle w,f(t,x,m,u,v)\rangle .$$
\end{itemize}

These conditions implies that 
\begin{equation}\label{intro:R}
R=\max_{x\in\td}\|f(t,x,m,u,v\|<\infty.
\end{equation}

Let us recall the definitions of open-loop and feedback strategies and corresponding motions of system (\ref{sys:agent_mfdg})  proposed in~\cite{Averbokh_mfdg}.  First, we consider the motion of each agent assuming that the flow of probabilities $m(\cdot)$ is given. Then, we introduce the flow of probabilities generated by the distribution of open-loop strategies. Finally, we introduce the feedback strategies and strategies with memory of the players and the corresponding flows of probabilities. 

As it was mentioned above, we start with the motions of each player. First, recall the classes of strategies.
\begin{itemize}
	\item A constant control of the first (respectively, second) player is an element  $u\in U$ (respectively, $v\in V$).
	\item A measurable control of the first player is a measurable function $u:[0,T]\rightarrow U$. Analogously, a second player's measurable control is a measurable function $v:[0,T]\rightarrow V$. Denote the set of measurable controls of the first (respectively, second) player by $\mathcal{U}_0$ (respectively, $\mathcal{V}_0$).
	\item A relaxed control of the first (respectively, second) player is a measure on $[0,T]\times U$ (respectively, on $[0,T]\times V$) compatible with the Lebesgue measure. This means that the set of relaxed controls of the first player is   \[\mathcal{U}\triangleq \mathcal{M}^+_{\text{product}}([0,T],\lambda,U),\] where $\lambda$ stands for the Lebesgue measure on $[0,T]$. Analogously, the set of relaxed control of the second player is  \[\mathcal{V}\triangleq \mathcal{M}^+_{\text{product}}([0,T],\lambda,V).\] This means that, given $\xi\in\mathcal{U}$, $\zeta\in\mathcal{V}$, we regard their disintegrations with respect to the Lebesgue measure $\xi(\cdot|t)$ and $\zeta(\cdot|t)$ as distributions of instantaneous players' controls applied at time $t$.  
\end{itemize} 

Notice that the sets of constant control are embedded into the sets of measurable controls. Furthermore, we identify a measurable control $u(\cdot):[0,T]\rightarrow U$ with a relaxed control $t\mapsto \delta_{u(t)}$. Recall that $\delta_z$ denotes the Dirac measure concentrated at $z$. One can also identify a measurable function $v(\cdot)$ with a weakly measurable mapping $t\mapsto \delta_{v(t)}$. This provides the embedding of the sets of measurable controls of the players to the set of relaxed controls.  Below, with some abuse of notation we assume that
\begin{equation}\label{inclusion:U_U_relaxed}
U\subset\mathcal{U}_0\subset\mathcal{U}, 
\end{equation}
\begin{equation}\label{inclusion:V_V_relaxed}
V\subset\mathcal{V}_0\subset\mathcal{V}. 
\end{equation}

Let $s$ be an initial time, $r\in (s,T]$, $y$ be an initial position of an agent agent, $\xi\in\mathcal{U}$, $\zeta\in\mathcal{V}$ be players' controls  acting upon an agent, $[s,r]\ni t\mapsto m(t)\in\ptd$ be a flow of probabilities. Denote by $x(\cdot,s,y,m(\cdot),\xi,\zeta)$ the unique solution of  the initial value problem
$$\frac{d}{dt}x(t)=\int_U\int_Vf(t,x(t),m(t),u,v)\xi(du|t)\zeta(dv|t),\ \ x(s)=y. $$ Let  $\operatorname{traj}_{m(\cdot)}^{s,r}: \td\times\mathcal{U}\times\mathcal{V}\rightarrow\mathcal{C}_{s,r} $ denote the operator assigning to $y$, $\xi$ and $\zeta$ the trajectory $x(\cdot,s,y,m(\cdot),\xi,\zeta)$.

Now we turn to the definition of evolution of the whole mean field type system driven by open-loop strategies. Recall that the state space of the whole system is $\ptd$. Since the agents placed in one point can be influenced by various controls, we use distribution of controls. They are probabilities on the product of phase space and space of controls. Using disintegration, one can regard them as weakly measurable functions of position taking values in the space of controls. Let $m\in\ptd$ be a distribution of agents. First, set
$$\mathcal{A}^c[m]\triangleq\mathcal{M}^+_{\text{product}}(\td,m,U),\ \ \mathcal{B}^c[m]\triangleq\mathcal{M}^+_{\text{product}}(\td,m,V). $$
Elements $\mathcal{A}^c[m]$ and $\mathcal{B}^c[m]$ are distributions of the constant controls of the first and second players respectively compatible with the measure $m$. 

In the same way, one can introduce the distribution of relaxed controls of the first and second players by the rules. Put
$$\mathcal{A}[m]\triangleq\mathcal{M}^+_{\text{product}}(\td,m,\mathcal{U}),\ \ \mathcal{B}[m]\triangleq\mathcal{M}^+_{\text{product}}(\td,m,\mathcal{V}). $$ Due to conventions (\ref{inclusion:U_U_relaxed}) and (\ref{inclusion:V_V_relaxed}), we assume that
$$\mathcal{A}^c[m]\subset \mathcal{A}[m],\ \ \mathcal{B}^c[m]\subset \mathcal{B}[m]. $$

The definition of feedback strategies proposed in~\cite{Averbokh_mfdg} requires the  notion of joint distributions of players' controls. A joint distribution of players' controls is a probability on $\td\times\mathcal{U}\times\mathcal{V}$ compatible with the distribution of agent $m$, i.e., the set of joint distribution of players controls is
$$\mathcal{D}[m]\triangleq \mathcal{M}^+_{\text{product}}(\td,m,\mathcal{U}\times\mathcal{V}). $$ If $\alpha\in \mathcal{A}[m]$, then denote by $\mathcal{D}_1[\alpha]$ the set of joint distribution of players' control produced by the reaction of the second players to $\alpha$, i.e.,
$$\mathcal{D}_1[\alpha]\triangleq \{\varkappa\in\mathcal{D}[m]:\operatorname{p}^{1,2}\sharp\varkappa=\alpha\}. $$ In the same way we define the set of joint distributions of players' controls compatible with the  distribution of first player's controls $\beta\in\mathcal{B}[m]$:
$$\mathcal{D}_2[\beta]\triangleq \{\varkappa\in\mathcal{D}[m]:\operatorname{p}^{1,3}\sharp\varkappa=\beta\}. $$

\begin{definition}\label{def:motion} Let $s,r\in[0,T]$, $s<r$, $m_*\in\ptd$ be an initial distribution of players, $\varkappa\in\mathcal{D}[m_*]$ be a joint distribution of players' controls consistent with $m_*$, we say that $[s,r]\ni t\mapsto m(t)\in\ptd$ is a flow of probabilities generated by $s$, $m_*$ and $\varkappa$ and write $m(\cdot)=m(\cdot,s,m_*,\varkappa)$ if there exists $\chi\in\mathcal{P}^2(\mathcal{C}_{s,r})$ such that
	\begin{itemize}
		\item $m(s)=m_*$;
		\item $m(t)=e_t\sharp\chi$;
		\item $\chi=\operatorname{traj}^{s,r}_{m(\cdot)}\sharp\varkappa$.
	\end{itemize}
\end{definition}

It can be proved in the same spirit as in~\cite{Sznitman} that there exists the unique flow of probabilities $m(\cdot,s,m_*,\varkappa)$.

It is natural to assume that the players can use information about the state of the whole system  to adjust their control. Recall that the state of the system in the mean field setting is provided by the distribution of agents. We use the Krasovskii-Subbotion approach~\cite{NN_PDG_en} which assumes that the controls is formed using the information about the state of the system in the finite number of time instants. 

\begin{definition}\label{def:first_player_strategy} Let $t_0$ be an initial time, $\Delta=\{t_i\}_{i=0}^N$ be a partition of $[t_0,T]$. A stepwise strategy with memory of the first player 
is a collection  $\mathbb{u}^\Delta=\{\mathbb{u}^\Delta_k\}_{k=0}^{N-1}$ where $\mathbb{u}^\Delta_k:(\ptd)^{k+1}\rightarrow\mathcal{P}^2(\td\times U)$ are such that 
$$\operatorname{p}^1\sharp\mathbb{u}^\Delta_k[m_0,\ldots,m_{k}]=m_{k}. $$\end{definition}
The value $\mathbb{u}^\Delta_k[m_0,\ldots,m_{k}]$  determines the distribution of controls used by the first player on the time interval $[t_{k},t_{k+1})$. 

\begin{definition}\label{def:stepwise_firts}
Let $t_0$ be an initial time,   $m_0$ be an initial position, $\Delta$ be a partition of $[t_0,T]$, $\mathbb{u}^\Delta=\{\mathbb{u}^\Delta_k\}_{k=0}^{N-1}$ be a stepwise strategy with memory of the first player. We say that a flow of probability $m(\cdot)$ is generated by $t_0$, $m_0$ and $\mathbb{u}^\Delta$ if there exists a sequence of probabilities $\alpha_k\in\mathcal{P}^2(\td\times U)$, $\varkappa_k\in \mathcal{P}^2(\td\times U\times\mathcal{V})$, $k=0,\ldots,N-1$, such that
\begin{itemize}
	\item $m(t_0)=m_0$;
	\item $\alpha_k\triangleq \mathbb{u}^\Delta_k[m(t_0),m(t_1),\ldots, m(t_k)]$, $\varkappa_k\in \mathcal{D}_1[\alpha_k]$;
	\item $m(t)=m(t,t_{k},m(t_k),\varkappa_k)$, $t\in [t_{k},t_{k+1}]$, $k=0,\ldots,N-1$.
\end{itemize}

We denote the set of flows of probabilities generated by  $t_0$, $m_0$, $\Delta$, and $\mathbb{u}^\Delta$ by 
$\mathcal{X}_1(t_0,m_0,\Delta,\mathbb{u}^\Delta)$.
\end{definition}

The second player's strategies are defined in the similar way. 
\begin{definition}
Given $t_0\in [0,T]$ and a partition of $[t_0,T]$ $\Delta=\{t_k\}_{k=0}^N$, a collection $\mathbb{u}^\Delta=\{\mathbb{u}^\Delta_k\}_{k=0}^{N-1}$ is called a stepwise strategy of the second player provided that $\mathbb{v}^\Delta_k:(\ptd)^{k+1}\rightarrow\mathcal{P}^2(\td\times V)$ are such that 
$$\operatorname{p}^1\sharp\mathbb{v}^\Delta_k[m_0,\ldots,m_{k}]=m_{k}. $$ 
\end{definition}
\begin{definition}\label{def:stepwise_second}
	We say that a flow of probabilities $m(\cdot)$ is generated by the initial time $t_0$, the initial distribution of agents $m_0$, the partition of $[t_0,T]$ $\Delta=\{t_k\}_{k=0}^N$ and the stepwise strategy with memory of the second player $\mathbb{v}^\Delta=\{\mathbb{v}^\Delta_k\}_{k=0}^{N-1}$ if one can find joint distributions of players' controls $\beta_k\in\mathcal{P}^2(\td\times V)$, $\varkappa_k\in\mathcal{P}^2(\td\times\mathcal{U}\times V)$, $k=0,\ldots, N-1$, such that
	\begin{itemize}
		\item $m(t_0)=m_0$;
		\item $\beta_k=\mathbb{v}^\Delta_k[m(t_0),\ldots,m(t_k)]$, $\varkappa_k\in\mathcal{D}_2[\beta_k]$;
		\item $m(t)=m(t,t_k,m(t_k),\varkappa_k)$, $t\in [t_k,t_{k+1}]$, $k=0,\ldots,N-1$.
	\end{itemize} 

Denote the set of flow of probabilities generated by $t_0$, $m_0$, $\Delta$ and $\mathbb{v}^\Delta$ by $\mathcal{X}_2(t_0,m_0,\Delta,\mathbb{v}^\Delta)$.
\end{definition}

Now let us briefly describe the feedback approach to the mean field type differential games~\cite{Averbokh_mfdg}. It implies that a feedback strategy of the first player is a mapping $\mathfrak{u}:[0,T]\times \ptd\rightarrow \mathcal{P}^2(\td\times U)$ such that
$$\operatorname{p}^1\sharp\mathfrak{u}[t,m]=m. $$ In this case, given a partition $\Delta=\{t_k\}_{k=0}^N$ of the time interval $[t_0,T]$, the  stepwise strategy $\mathbb{u}^\Delta=\{\mathbb{u}^\Delta_k\}_{k=0}^{N-1}$ realizing the feedback strategy $\mathfrak{u}$ is defined as
$$\mathbb{u}^\Delta_k[m_0,\ldots,m_k]\triangleq \mathfrak{u}[t_k,m_k]. $$ 
Analogously, a feedback strategy of the second player is a mapping $\mathfrak{v}:[0,T]\times \ptd\rightarrow \mathcal{P}^2(\td\times V)$ such that
$$\operatorname{p}^1\sharp\mathfrak{v}[t,m]=m. $$ Given a partition of the interval $[t_0,T]$ $\Delta=\{t_k\}_{k=0}^N$, one can reduce the feedback strategy to the strategies with memory letting
$$\mathbb{v}^\Delta_k[m_0,\ldots,m_k]\triangleq \mathfrak{v}[t_k,m_k]. $$ 

The upper and lower value functions by the rules
\begin{equation}\label{intro:upper_value}
\operatorname{Val}^+(t_0,m_0)=\inf_{\Delta,\mathbb{u}^\Delta}\sup_{m(\cdot)\in\mathcal{X}_1(t_0,m_0,\Delta,\mathbb{u}^\Delta)}g(m(T)),
\end{equation}
\begin{equation}\label{intro:lower_value}
\operatorname{Val}^-(t_0,m_0)=\sup_{\Delta,\mathbb{v}^\Delta}\inf_{m(\cdot)\in\mathcal{X}_2(t_0,m_0,\Delta,\mathbb{v}^\Delta)}g(m(T)).
\end{equation}

Allowing in (\ref{intro:upper_value}) only the stepwise realizations of feedback strategies, we obtain the upper value function in the class of feedback strategies. Denote it by $\operatorname{Val}^+_f(t_0,m_0)$. Analogously, reducing the class of strategies in (\ref{intro:lower_value}) to the  stepwise realizations of second player's feedback strategies, we arrive at the notion of lower value function in the class of feedback strategies $\operatorname{Val}^-_f(t_0,m_0)$.

Obviously,
$$\operatorname{Val}^-(t_0,m_0)\leq \operatorname{Val}^-_f(t_0,m_0)\leq \operatorname{Val}^+_f(t_0,m_0) \leq \operatorname{Val}^+(t_0,m_0). $$  It is proved (see \cite[Theorem 2]{Averbokh_mfdg})  that, under imposed conditions, the game has the value in the class of feedback strategies, i.e.,
$$\operatorname{Val}^-_f(t_0,m_0)=\operatorname{Val}^+_f(t_0,m_0)=\operatorname{Val}^+(t_0,m_0)=\operatorname{Val}^-(t_0,m_0)=\operatorname{Val}(t_0,m_0). $$ Hereinafter, $\operatorname{Val}$ denotes the value of the game. Below we estimate it using sub- and supersolutions of the Hamilton-Jacobi equations corresponding to the finite-dimensional differential games.

\section{Finite state mean field type Markov games}\label{sect:finite_state_games}
In this section, we consider the finite state  mean field type continuous-time  game. The main feature of the finite state mean field systems is that a phase variable is an element of a finite-dimensional simplex. Thus, the finite state mean field type Markov games can be reduced to the finite-dimensional differential game theory.

Let $\mathcal{S}$ be a finite set. We will examine the approximation of original mean field type differential game by the mean field type Markov game with the state space of each agent equal to $\mathcal{S}$. Thus, without loss of generality, we assume that $\mathcal{S}\subset \td$. Denote
by $d(\mathcal{S})$ the fineness of $\mathcal{S}$:
\begin{equation}\label{intro:fineness_S}
d(\mathcal{S})\triangleq \min_{\bar{x},\bar{y}\in\mathcal{S},\bar{x}\neq\bar{y}}\|\bar{x}-\bar{y}\|. 
\end{equation}
 Furthermore, let $\Sigma$ be a simplex on $\{1,\ldots,|\mathcal{S}|\}$: $$\Sigma\triangleq \left\{\mu=(\mu_{\bar{x}})_{\bar{x}\in\mathcal{S}}:\mu_{\bar{x}}\geq 0,\sum_{x\in \mathcal{S}}\mu_{\bar{x}}=1\right\}.$$ Notice that \[\Sigma\subset \mathbb{R}^{|\mathcal{S}|}.\]

Denote by $\mathbb{1}_{\bar{y}}$ the distribution concentrated at $\bar{y}$, i.e., $\mathbb{1}_{\bar{y}}=(\mathbb{1}_{\bar{y},\bar{x}})_{\bar{x}\in\mathcal{S}}$ where
	\begin{equation}\label{intro:1}
	\mathbb{1}_{\bar{y},\bar{x}}=\left\{\begin{array}{cc}
	1, & \bar{x}=\bar{y}, \\
	0, & \bar{x}\neq\bar{y}.
	\end{array}\right. 
	\end{equation}  The distributions $\mathbb{1}_{\bar{y}}$ are extremal points of $\Sigma$.
 
Given $p\geq 1$, the $p$-th metric  on $\mathbb{R}^{|\mathcal{S}|}$ (and, thus, on $\Sigma$)    is defined by the rule: for $\mu^1=(\mu_{\bar{x}}^1)_{\bar{x}\in\mathcal{S}},\mu^2=(\mu_{\bar{x}}^2)_{\bar{x}\in\mathcal{S}}\in \mathbb{R}^{|\mathcal{S}|}$,  $$\|\mu^1-\mu^2\|_p\triangleq \left[\sum_{x\in \mathcal{S}}|\mu_{\bar{x}}^1-\mu_{\bar{x}}^2|^p\right]^{1/p}. $$

If $\mu=(\mu_{\bar{x}})_{\bar{x}\in\mathcal{S}}\in\Sigma$, then let $\tilde{\mu}$ stand for the  measure on $\mathcal{S}$ defined by
\begin{equation}\label{intro:tilde_mu}
\tilde{\mu}\triangleq \sum_{\bar{x}\in \mathcal{S}}\mu_{\bar{x}}\delta_{\bar{x}}\in\mathcal{P}(\mathcal{S}).
\end{equation} The mapping 
$$\Sigma\ni\mu\mapsto \tilde{\mu}\in\mathcal{P}(\mathcal{S}) $$ is an isomorphism between $\Sigma$ and $\mathcal{P}(\mathcal{S})$. However, the metrics on $\Sigma$ and $\mathcal{P}(\mathcal{S})$ are not equivalent. The relation between them is given in the following.

\begin{proposition}\label{prop:metrics} There exist positive constants $C_1$ and $C_2$ (depending on $\mathcal{S}$) such that, if $\mu^1=(\mu_{\bar{x}}^1)_{\bar{x}\in\mathcal{S}},\ \ \mu^2=(\mu_{\bar{x}}^2)_{\bar{x}\in\mathcal{S}}\in\Sigma$, $\widetilde{\mu^1}=\sum_{\bar{x}\in \mathcal{S}}\mu_{\bar{x}}^1\delta_x$, $\widetilde{\mu^2}=\sum_{x\in \mathcal{S}}\mu_{\bar{x}}^2\delta_{\bar{x}}$, then
	\begin{equation}\label{ineq:R_d_wasser}
	\|\mu^1-\mu^2\|_p\leq C_1 W_p(\widetilde{\mu^1},\widetilde{\mu^2}), 
	\end{equation}
\begin{equation}\label{ineq:wasser_R_d}
	W_p(\widetilde{\mu^1},\widetilde{\mu^2})\leq C_2(\|\mu^1-\mu^2\|_p)^{1/p}. 
\end{equation}
\end{proposition}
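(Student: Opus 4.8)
The plan is to establish \eqref{ineq:R_d_wasser} and \eqref{ineq:wasser_R_d} separately by elementary transport-plan estimates, using only that $\mathcal{S}$ is finite, so that the pairwise distances in $\mathcal{S}$ are bounded below by the fineness $d(\mathcal{S})>0$ (see \eqref{intro:fineness_S}) and above by $D\triangleq\max_{\bar x,\bar y\in\mathcal{S}}\|\bar x-\bar y\|<\infty$; the constants $C_1,C_2$ will be expressed through $d(\mathcal{S})$, $D$ and $|\mathcal{S}|$. For \eqref{ineq:R_d_wasser} I would take an arbitrary $\pi\in\Pi(\widetilde{\mu^1},\widetilde{\mu^2})$; since both marginals are supported on $\mathcal{S}$, $\pi=(\pi_{\bar x\bar y})_{\bar x,\bar y\in\mathcal{S}}$ with $\sum_{\bar y}\pi_{\bar x\bar y}=\mu^1_{\bar x}$ and $\sum_{\bar x}\pi_{\bar x\bar y}=\mu^2_{\bar y}$. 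Subtracting these relations gives $\mu^1_{\bar x}-\mu^2_{\bar x}=\sum_{\bar y\neq\bar x}(\pi_{\bar x\bar y}-\pi_{\bar y\bar x})$, hence $\sum_{\bar x}|\mu^1_{\bar x}-\mu^2_{\bar x}|\le 2\sum_{\bar x\neq\bar y}\pi_{\bar x\bar y}$. As $\|\bar x-\bar y\|\ge d(\mathcal{S})$ whenever $\bar x\neq\bar y$, the right-hand side is at most $\frac{2}{d(\mathcal{S})}\sum_{\bar x,\bar y}\|\bar x-\bar y\|\,\pi_{\bar x\bar y}$. Taking the infimum over $\pi$ and using $\|v\|_p\le\|v\|_1$ on $\mathbb{R}^{|\mathcal{S}|}$ together with $W_1\le W_p$ (Jensen's inequality), one obtains \eqref{ineq:R_d_wasser} with $C_1=2/d(\mathcal{S})$.

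For \eqref{ineq:wasser_R_d} I would exhibit an explicit coupling. Put $a_{\bar x}\triangleq\min\{\mu^1_{\bar x},\mu^2_{\bar x}\}$ and $s\triangleq 1-\sum_{\bar x}a_{\bar x}=\frac{1}{2}\|\mu^1-\mu^2\|_1$; if $s=0$ the measures coincide and there is nothing to prove, so assume $s>0$. Define $\pi$ by $\pi_{\bar x\bar x}\triangleq a_{\bar x}$ on the diagonal, $\pi_{\bar x\bar y}\triangleq s^{-1}(\mu^1_{\bar x}-\mu^2_{\bar x})(\mu^2_{\bar y}-\mu^1_{\bar y})$ when $\mu^1_{\bar x}>\mu^2_{\bar x}$ and $\mu^2_{\bar y}>\mu^1_{\bar y}$, and $\pi_{\bar x\bar y}\triangleq 0$ otherwise. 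A direct check of the row and column sums shows $\pi\in\Pi(\widetilde{\mu^1},\widetilde{\mu^2})$, and its cost, being carried entirely off the diagonal, is at most $D^p\sum_{\bar x\neq\bar y}\pi_{\bar x\bar y}=D^p s$. Hence $W_p^p(\widetilde{\mu^1},\widetilde{\mu^2})\le D^p s=\frac{1}{2}D^p\|\mu^1-\mu^2\|_1\le\frac{1}{2}D^p|\mathcal{S}|^{1-1/p}\|\mu^1-\mu^2\|_p$ by Hölder's inequality on $\mathbb{R}^{|\mathcal{S}|}$, and taking the $p$-th root yields \eqref{ineq:wasser_R_d} with $C_2=D\,(|\mathcal{S}|^{1-1/p}/2)^{1/p}$.

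There is no genuine obstacle here, and the computations are routine; the only feature worth a comment is the exponent $1/p$ in \eqref{ineq:wasser_R_d}, which cannot be improved to $1$. Indeed, transporting a mass $\varepsilon$ over a fixed positive distance contributes order $\varepsilon$ to $W_p^p$, hence order $\varepsilon^{1/p}$ to $W_p$, whereas the corresponding perturbation of $(\mu_{\bar x})_{\bar x\in\mathcal{S}}$ has $\|\cdot\|_p$-size of order $\varepsilon$ (take $\mu^1=\mathbb{1}_{\bar x}$, $\mu^2=(1-\varepsilon)\mathbb{1}_{\bar x}+\varepsilon\mathbb{1}_{\bar y}$). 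This asymmetry is precisely the non-equivalence of the two metrics noted before the statement, and the only care needed in the write-up is the bookkeeping between $\|\cdot\|_1$ and $\|\cdot\|_p$ and between $W_1$ and $W_p$.
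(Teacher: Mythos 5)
Your proof is correct and follows essentially the same route as the paper: for \eqref{ineq:R_d_wasser} you bound the total variation $\|\mu^1-\mu^2\|_1$ by the off-diagonal mass of a coupling, which costs at least $d(\mathcal{S})$ per unit, and for \eqref{ineq:wasser_R_d} you use the coupling that keeps the common mass $\mu^1_{\bar x}\wedge\mu^2_{\bar x}$ on the diagonal, followed by H\"older to pass from $\|\cdot\|_1$ to $\|\cdot\|_p$. The only cosmetic difference is that you route the first inequality through $W_1\le W_p$ and $\|\cdot\|_p\le\|\cdot\|_1$ whereas the paper manipulates the $p$-th powers directly (using $|\mu^1_{\bar x}-\mu^2_{\bar x}|\le 1$), and your closing remark on the sharpness of the exponent $1/p$ is a nice addition not present in the paper.
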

This proposition is proved in the Appendix.

We regard the set $\mathcal{S}$ to be  the state space for each agent in the  mean field type Markov game, whereas $\Sigma$ is the set of distributions on $\mathcal{S}$. The dynamics of the finite state mean field type game is given by the controlled Markov chain with the transition rates determined by the Kolmogorov matrix
$$Q(t,\mu,u,v)=(Q_{\bar{x},\bar{y}}(t,\mu,u,v))_{\bar{x},\bar{y}\in\mathcal{S}},$$ where $ t\in [0,T],$ $\mu\in\Sigma,$ $u\in U,$ $v\in V.$  As above, the variable $u$ is  controlled by the first player, whereas $v$ is chosen by the second player. The mean field methodology implies that  $\mu(t)=\operatorname{Law}(X(t))$, where $X(t)$ is  the stochastic process produced by the nonlinear Markov chain with the Kolmogorov matrix $Q(t,\mu(t),u(t),v(t))$. We assumes that the first (second) player tries to minimize (maximize) the quantity \[g(\widetilde{\mu(T)}). \]

We impose the following conditions on the matrix $Q$: 
\begin{enumerate}[label=(M\arabic*)]
	\item\label{cond:M_Kolmogorov} for every $(t,\mu,u,v)\in [0,T]\times\Sigma\times U\times V$, $Q_{\bar{x},\bar{y}}(t,\mu,u,v)\geq 0$ when $\bar{x}\neq \bar{y}$ and
	$$\sum_{\bar{y}\in \mathcal{S}}Q_{\bar{x},\bar{y}}(t,\mu,u,v)=0; $$
	\item\label{cond:M_continuity} the functions $[0,T]\times \Sigma\times U\times V\ni (t,\mu,u,v)\mapsto Q_{\bar{x},\bar{y}}(t,\mu,u,v)\in \mathbb{R}$ are continuous;
	\item\label{cond:M_Lipschitz} there exists a constant $L'$ such that for any $t\in [0,T]$, $\bar{x},\bar{y}\in\mathcal{S}$, $\mu^1,\mu^2\in\Sigma$, $u\in U$, $v\in V$,  $$|Q_{\bar{x},\bar{y}}(t,\mu^1,u,v)-Q_{\bar{x},\bar{y}}(t,\mu^2,u,v)|\leq L'\|\mu^1-\mu^2\|_2 .$$ 
\end{enumerate}

As above, considering the finite state mean field type differential games, we first define the motion of the representative agent, then turn to the behavior of the whole system. On the first stage, we assume that the flow of probabilities $[s,r]\ni t\mapsto\mu(t)\in\Sigma$ is given, while the players use relaxed feedback strategies. For a Markov game, it is convenient to introduce relaxed feedback controls of the players using weakly measurable functions. 

Let 
\[\mathcal{U}_{\text{instant}}\triangleq \mathcal{P}(U),\ \ \mathcal{V}_{\text{instant}}\triangleq \mathcal{P}(V) \] be sets of players' controls applied at each time instant and each state from $\mathcal{S}$. Therefore, the instantaneous controls of the players lie in the sets \[\mathcal{U}_{\text{instant}}^{\mathcal{S}}=\{\gamma_{\mathcal{S}}=(\gamma_{\bar{x}})_{\bar{x}\in\mathcal{S}}:\gamma_{\bar{x}}\in\mathcal{U}_{\text{instant}}\}, \]
\[\mathcal{V}_{\text{instant}}^{\mathcal{S}}=\{\vartheta_{\mathcal{S}}=(\vartheta_{\bar{x}})_{\bar{x}\in\mathcal{S}}:\vartheta_{\bar{x}}\in\mathcal{V}_{\text{instant}}\}. \]

We say that a function $[s,r]\ni t\mapsto\gamma_{\mathcal{S}}(t)=(\gamma_{\bar{x}})_{\bar{x}\in\mathcal{S}}\in \mathcal{U}_{\text{instant}}^{\mathcal{S}}$ is a measurable control  of the first player in the finite state mean field type continuous-times game if each coordinate function $\gamma_{\bar{x}}(\cdot)$ is weakly measurable. The measurable controls of the second player are introduced in the same way.
Given measurable controls $\gamma_{\bar{x}}:[0,T]\rightarrow \mathcal{U}_{\text{instant}}$, $\vartheta_{\bar{x}}:[0,T]\rightarrow \mathcal{V}_{\text{instant}}$ of the first and second players respectively, we regard $\gamma_{\bar{x}}(t)$, $\vartheta_{\bar{x}}(t)$ as the instantaneous controls of the players acting upon the agent who occupy the state $\bar{x}$ at time $t$. 

Considering the relaxed feedback strategy, we arrive at the following Kolmogorov matrix:
\[\mathcal{Q}(t,\mu,\gamma_{\mathcal{S}},\vartheta_{\mathcal{S}})=(\mathcal{Q}_{\bar{x},\bar{y}}(t,\mu,\gamma_{\bar{x}},\vartheta_{\bar{x}}))_{\bar{x},\bar{y}\in\mathcal{S}}\] with
$$\mathcal{Q}_{\bar{x},\bar{y}}(t,\mu,\gamma_{\bar{x}},\vartheta_{\bar{x}})\triangleq \int_U\int_VQ_{\bar{x},\bar{y}}(t,\mu,u,v)\gamma_{\bar{x}}(du)\vartheta_{\bar{x}}(dv). $$

The probabilities of the states $\nu(t)=(\nu_{\bar{y}}(t))_{\bar{y}\in\mathcal{S}}$, $\nu_{\bar{y}}=P(X(t)=\bar{y})$ obeys the backward Kolmogorov equation
\[\frac{d}{dt}\nu_{\bar{y}}(t)=\sum_{\bar{x}\in\mathcal{S}}\nu_{\bar{x}}(t)\mathcal{Q}_{\bar{x},\bar{y}}(t,\mu(t),\gamma_{\bar{x}}(t),\vartheta_{\bar{x}}(t)),\ \ \bar{y}\in\mathcal{S}.\] Assuming that $\nu=(\nu_{\bar{x}})_{\bar{x}\in\mathcal{S}}$ is a row-vector with some ordering of~$\mathcal{S}$, we rewrite this equations in the vector form
\begin{equation}\label{eq:nu}
\frac{d}{dt}\nu(t)=\nu(t)\mathcal{Q}(t,\mu(t),\gamma_{\mathcal{S}}(t),\vartheta_{\mathcal{S}}(t)),\ \ \nu(s)=\nu_*. 
\end{equation}

The mean field game methodology implies that the probability of the event that an  agent occupies the state $\bar{x}$ at time $t$ is equal to $\mu_{\bar{x}}(t)$. This leads to the following equations:
$$\frac{d}{dt}\mu_{\bar{y}}(t)=\sum_{\bar{x}\in \mathcal{S}}\mu_{\bar{x}}(t)\mathcal{Q}_{\bar{x},\bar{y}}(t,\mu(t),\gamma_{\bar{x}}(t),\vartheta_{\bar{x}}(t)),\ \ \bar{y}\in\mathcal{S} $$ or in the vector form 
\begin{equation}\label{eq:backward_Kolmogorov}
\frac{d}{dt}\mu(t)=\mu(t)\mathcal{Q}(t,\mu(t),\gamma_{\mathcal{S}}(t),\vartheta_{\mathcal{S}}(t)).
\end{equation} 

Equation (\ref{eq:backward_Kolmogorov}) describes the dynamics of finite-dimensional differential game with the controls of the players $\gamma_{\mathcal{S}}(t)$, $\vartheta_{\mathcal{S}}(t)$. Recall that the first players tries to minimize 
\begin{equation}\label{expression:g_mu}
\hat{g}(\mu(T)), 
\end{equation} where
\begin{equation}\label{intro:hat_g}
\hat{g}(\mu)\triangleq g(\widetilde{\mu})=g\left(\sum_{\bar{x}\in \mathcal{S}}\mu_{\bar{x}}\delta_{\bar{x}}\right).
\end{equation}
 The purpose of the second player is assumed to be opposite. 

The solution of this differential game can be described using the notion of the viscosity/minimax solution of the corresponding Hamilton-Jacobi equation. Let $t\in [0,T]$, $\mu=(\mu_{\bar{x}})_{\bar{x}\in\mathcal{S}}\in\Sigma$, $w=(w_{\bar{x}})_{\bar{x}\in\mathcal{S}}\in\mathbb{R}^{\mathcal{S}}$. Put
\begin{equation*}
\begin{split}
\mathcal{H}^{\mathcal{Q}}(t,\mu,w)&\triangleq \min_{\gamma_{\mathcal{S}}\in\mathcal{U}_{\text{instant}}^{\mathcal{S}}}\max_{\vartheta_{\mathcal{S}}\in\mathcal{V}_{\text{instant}}^{\mathcal{S}}}\sum_{\bar{y}\in \mathcal{S}}\sum_{\bar{x}\in \mathcal{S}}\mu_{\bar{x}}\mathcal{Q}_{\bar{x},\bar{y}}(t,\mu,\gamma_{\bar{x}},\vartheta_{\bar{x}})w_{\bar{y}} \\&=\min_{\gamma_{\mathcal{S}}\in\mathcal{U}_{\text{instant}}^{\mathcal{S}}}\max_{\vartheta_{\mathcal{S}}\in\mathcal{V}_{\text{instant}}^{\mathcal{S}}}\sum_{\bar{x}\in \mathcal{S}}\mu_{\bar{x}}\sum_{\bar{y}\in \mathcal{S}}\mathcal{Q}_{\bar{x},\bar{y}}(t,\mu,\gamma_{\bar{x}},\vartheta_{\bar{x}})w_{\bar{y}}.
\end{split}
\end{equation*} Since we admit the feedback controls $\gamma_{\mathcal{S}}=(\gamma_{\bar{x}})_{\bar{x}\in\mathcal{S}}$, $\vartheta_{\mathcal{S}}=(\vartheta_{\bar{x}})_{\bar{x}\in\mathcal{S}}$, we have that
\begin{equation*}
\mathcal{H}^{\mathcal{Q}}(t,\mu,w)=\sum_{\bar{x}\in \mathcal{S}}\mu_{\bar{x}} \min_{\gamma_{\bar{x}}\in\mathcal{U}_{\text{instant}}}\max_{\vartheta_{\bar{x}}\in\mathcal{V}_{\text{instant}}}\sum_{\bar{y}\in \mathcal{S}}\mathcal{Q}_{\bar{x},\bar{y}}(t,\mu,\gamma_{\bar{x}},\vartheta_{\bar{x}})w_{\bar{y}}.
\end{equation*}
Furthermore, by construction of $\mathcal{Q}$, we have
\[\sum_{\bar{y}\in \mathcal{S}}\mathcal{Q}_{\bar{x},\bar{y}}(t,\mu,\gamma_{\bar{x}},\vartheta_{\bar{x}})w_{\bar{y}}=\int_{U}\int_V\sum_{\bar{y}\in \mathcal{S}}Q_{\bar{x},\bar{y}}(t,\mu,u,v)w_{\bar{y}}\gamma_{\bar{x}}(du)\vartheta_{\bar{x}}(dv).\]
Thus, the function \[\sum_{\bar{y}\in \mathcal{S}}\mathcal{Q}_{\bar{x},\bar{y}}(t,\mu,\gamma_{\bar{x}},\vartheta_{\bar{x}})w_{\bar{y}}\] is convex w.r.t. $\gamma_{\bar{x}}$ and concave w.r.t. $\vartheta_{\bar{x}}$. From the minimax theorem \cite{Sion_minimax}, it follows that
\[
\begin{split}
\mathcal{H}^{\mathcal{Q}}(t,\mu,w)&=\sum_{\bar{x}\in \mathcal{S}}\mu_{\bar{x}} \max_{\vartheta_{\bar{x}}\in\mathcal{V}_{\text{instant}}}\min_{\gamma_{\bar{x}}\in\mathcal{U}_{\text{instant}}}\sum_{\bar{y}\in \mathcal{S}}\mathcal{Q}_{\bar{x},\bar{y}}(t,\mu,\gamma_{\bar{x}},\vartheta_{\bar{x}})w_{\bar{y}}\\
&= \max_{\vartheta_{\mathcal{S}}\in\mathcal{V}_{\text{instant}}^{\mathcal{S}}}\min_{\gamma_{\mathcal{S}}\in\mathcal{U}_{\text{instant}}^{\mathcal{S}}}\sum_{\bar{y}\in \mathcal{S}}\sum_{\bar{x}\in \mathcal{S}}\mu_{\bar{x}}\mathcal{Q}_{\bar{x},\bar{y}}(t,\mu,\gamma_{\bar{x}},\vartheta_{\bar{x}})w_{\bar{y}}
\end{split}
\]

In particular, we showed that differential game (\ref{eq:backward_Kolmogorov}), (\ref{expression:g_mu}) satisfies the Isaacs' condition.

For $\varphi:[0,T]\times\Sigma\rightarrow \mathbb{R}$, we denote by $\nabla\varphi(t,\mu)$ the vector of partial derivatives $$\nabla\varphi(t,\mu)\triangleq \left(\frac{\partial\varphi}{\partial\mu_{\bar{x}}}(t,\mu)\right)_{\bar{x}\in\mathcal{S}}.$$ 

Consider the Hamilton-Jacobi equations
\begin{equation*}\label{eq:HJ}
\frac{\partial\varphi}{\partial t}+\mathcal{H}^{\mathcal{Q}}(t,\mu,\nabla\varphi))=0.\ \ \varphi(T,\mu)=\hat{g}(\mu),
\end{equation*} where $\hat{g}$ is introduced by (\ref{intro:hat_g}).

Notice that a supersolutions (respectively, subsolution) of equation~(\ref{eq:HJ}) is an upper (lower) bound for the  value function for the differential game~(\ref{eq:backward_Kolmogorov}),~(\ref{expression:g_mu}) \cite{Subb_book}. The solution of~(\ref{eq:HJ}) is the value function of differential game~(\ref{eq:backward_Kolmogorov}),~(\ref{expression:g_mu}) (see \cite{Subb_book}).

\section{Main result}\label{sect:result}
The purpose of this Section is to formulate Theorem~\ref{th:existence_strategy} which states that, if the set $\mathcal{S}$ and the Markov chain with the Kolmogorov matrix $Q(t,\mu,u,v)$ approximate the phase space for the original mean field type differential game $\td$ and the dynamics $f$, then  supersolutions (respectively, subsolutions) of (\ref{eq:HJ})  provide upper (respectively, lower) estimates of the players' outcomes in the original mean field type differential game. This also yields the estimates of the value function of the mean field type differential game by the value function of finite-dimensional differential game (see Corollary~\ref{corollary:value}).

First, let $\ell:\td\times\td\rightarrow\rd$ be a measurable function assigning to a pair of elements $x,y\in\td$ a vector $z'\in x-y$ of the minimal norm. The existence of this function follows from the measurable maximum theorem \cite[Theorem 18.19]{Infinite_dimensional_analysis}. Notice that, for $x,y,z\in\td$,
\begin{equation}\label{equality:ell}
\|x-y\|^2-\|y-z\|^2=\|x-z\|^2-2\langle \ell(x,z),\ell(y,z)\rangle.
\end{equation}

Recall that we assume that $\mathcal{S}\subset \td$. Let $\varepsilon>0$ be such that 
  \begin{equation}\label{assumption:ineq:S_td}
\max_{x\in\td}\min_{\bar{y}\in\mathcal{S}}\|x-\bar{y}\|\leq \varepsilon, 
\end{equation}
  
\begin{equation}\label{assumption:ineq:drift}
\max_{t\in [0,T],\bar{x}\in\mathcal{S},\mu\in\Sigma,u\in U,v\in V}\left\|f(t,\bar{x},\tilde{\mu},u,v)-\sum_{\bar{y}\in\mathcal{S},\bar{y}\neq \bar{x}}\ell(\bar{y},\bar{x})Q_{\bar{x},\bar{y}}(t,\mu,u,v)\right\|\leq \varepsilon,
\end{equation} and
\begin{equation}\label{assumption:ineq:violance}
\max_{t\in [0,T],\bar{x}\in\mathcal{S},\mu\in\Sigma,u\in U,v\in V}\sum_{\bar{y}\in\mathcal{S}}\|\bar{y}-\bar{x}\|^2Q_{\bar{x},\bar{y}}(t,\mu,u,v)\leq \varepsilon^2.
\end{equation}

Below, without loss of generality, we assume that $\varepsilon\leq 1$.

Further, given $m\in\ptd$, denote by $\operatorname{pr}_{\mathcal{S}}(m)$ an element of $\Sigma$ such that $\widetilde{\operatorname{pr}_{\mathcal{S}}(m)}$ is a proximal to $m$ element of $\mathcal{P}^2(\mathcal{S})$. This means that $ \operatorname{pr}_{\mathcal{S}}(m)$ minimizes the function \[\Sigma\ni \mu=(\mu_{\bar{x}})_{\bar{x}\in\mathcal{S}}\mapsto W_2(\tilde{\mu},m)=W_2\left(\sum_{\bar{x}\in\mathcal{S}}\mu_{\bar{x}}\delta_{\bar{x}},m\right).\] As above, due to the measurable maximum theorem (see \cite[Theorem 18.19]{Infinite_dimensional_analysis}) one may choose the mapping $m\mapsto \operatorname{pr}_{\mathcal{S}}(m)$ to be measurable.

Let $\varsigma_g$ be a modulus of continuity for the function $g$, i.e.,
\[\varsigma_g(\theta)\triangleq \sup\{|g(m')-g(m'')|:\, m',m''\in\ptd,\, W_2(m',m'')\leq \theta\}.\] Set 
\[C^*\triangleq \sqrt{1+2T}e^{(2L+1/2)T}.\] Here $L$ is a Lipschitz constant of the function $f$ w.r.t. $x$ and $m$ (see (\ref{intro:L})).

\begin{theorem}\label{th:existence_strategy} 
	
	\begin{enumerate}[label=(\roman{*})]
		
		\item Given a supersolution of (\ref{eq:HJ}) $\varphi^+ $, $t_0\in [0,T]$,  and a partition of $[t_0,T]$ $\Delta$, there exists a first player's strategy with memory  $\mathbb{u}^\Delta$ such that, for every $m_0\in\ptd$ and $m(\cdot)\in\mathcal{X}_1(t_0,m_0,\Delta,\mathbb{u}^\Delta)$, $$g(m(T))\leq \varphi^+(t_0,\operatorname{pr}_{\mathcal{S}}(m_0))+\varsigma_g(C^*\varepsilon+\varsigma^*(d(\Delta))).$$ 
		\item For a function $\varphi^-$ that is a subsolution of (\ref{eq:HJ}), an initial time $t_0$ and a partition of $[t_0,T]$ $\Delta$, one can construct a strategy with memory of the second player $\mathbb{v}^\Delta$, satisfying the following: for every $m_0\in\ptd$ and  $m(\cdot)\in \mathcal{X}_2(t_0,m_0,\Delta,\mathbb{v}^\Delta)$,
		$$g(m(T))\geq \varphi^-(t_0,\operatorname{pr}_{\mathcal{S}}(m_0))-\varsigma_g(C^*\varepsilon+\varsigma^*(d(\Delta))). $$
	\end{enumerate}
Here  $\varsigma^*(\epsilon)$ is a function taking values in $[0,+\infty)$ satisfying
$\varsigma^*(\epsilon)\rightarrow 0$ as $\epsilon\rightarrow 0$.
\end{theorem}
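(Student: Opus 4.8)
The plan is to prove this via the extremal shift (Krasovskii--Subbotin) rule, adapted to the mean field setting. I will describe the construction for part (i); part (ii) is symmetric, exchanging the roles of the players and using a subsolution instead of a supersolution. The key idea is to run, in parallel with the true flow of probabilities $m(\cdot)$ in the original game on $\td$, a ``guide'' flow $\mu(\cdot)$ in the model game on $\Sigma$, and to choose the first player's distribution of controls on each interval $[t_k,t_{k+1})$ so as to pull $m(\cdot)$ toward the guide while the guide itself tracks a near-optimal motion of the model differential game. Concretely: given the supersolution $\varphi^+$ of~\eqref{eq:HJ}, standard minimax/viscosity theory (see \cite{Subb_book}) guarantees that from any $(t_k,\mu_k)$ the second player (the maximizer) in the model game~\eqref{eq:backward_Kolmogorov},~\eqref{expression:g_mu} has a stepwise control keeping the value of $\varphi^+$ from increasing along the guide; we couple the first player's strategy $\mathbb{u}^\Delta_k$ in the original game to this by the extremal shift: on $[t_k,t_{k+1})$ the first player uses the distribution of constant controls $u$ minimizing (pointwise in the agent's state $\bar x\in\mathcal S$) the inner product of $\ell(m$-barycenter deviation$)$ with the drift $f$, i.e., the $u$ attaining the min in the Isaacs operator evaluated at the shift direction determined by $m(t_k)$ versus $\widetilde{\mu_k}$.

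The estimates proceed in three steps, which I would organize using the properties established in Section~\ref{sect:properties}. First, the \emph{coupling estimate}: I would track the quantity $w(t)\triangleq W_2^2(m(t),\widetilde{\mu(t)})$ (or a convenient surrogate using the function $\ell$ and identity~\eqref{equality:ell}) along a single step $[t_k,t_{k+1}]$. Differentiating and inserting the extremal-shift choice of $u$, the Isaacs condition lets the first player cancel the worst-case cross term up to errors controlled by: the Lipschitz constant $L$ of $f$ (contributing the $e^{(2L+1/2)T}$ factor), the discrepancy~\eqref{assumption:ineq:drift} between the drift $f$ and the first-moment flux of the Kolmogorov matrix (the $\varepsilon$ from the drift condition), the quadratic-spread bound~\eqref{assumption:ineq:violance} (the diffusion-like correction, also $O(\varepsilon^2)$), and the step size $d(\Delta)$ (a modulus-of-continuity term $\varsigma^*(d(\Delta))$ coming from freezing $t$, $m$, $\mu$ over the interval). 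A Grönwall argument over all steps then yields $W_2(m(T),\widetilde{\mu(T)})\le C^*\varepsilon+\varsigma^*(d(\Delta))$, with $C^*=\sqrt{1+2T}\,e^{(2L+1/2)T}$ exactly as defined; the $\sqrt{1+2T}$ absorbs the accumulated additive $\varepsilon^2$ and $\varepsilon$ contributions after taking square roots. Second, the \emph{guide estimate}: by the choice of the second player's stepwise control in the model game and the supersolution property of $\varphi^+$, $\varphi^+(t_{k+1},\mu(t_{k+1}))\le \varphi^+(t_k,\mu(t_k))$ up to a term that sums to something $o(1)$ in $d(\Delta)$, hence $\hat g(\mu(T))=\varphi^+(T,\mu(T))\le \varphi^+(t_0,\mu(t_0))=\varphi^+(t_0,\operatorname{pr}_{\mathcal S}(m_0))$, using $\mu(t_0)=\operatorname{pr}_{\mathcal S}(m_0)$. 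Third, \emph{combining}: $g(m(T))\le g(\widetilde{\mu(T)})+\varsigma_g(W_2(m(T),\widetilde{\mu(T)}))=\hat g(\mu(T))+\varsigma_g(W_2(m(T),\widetilde{\mu(T)}))\le \varphi^+(t_0,\operatorname{pr}_{\mathcal S}(m_0))+\varsigma_g(C^*\varepsilon+\varsigma^*(d(\Delta)))$, folding the guide-estimate slack into the definition of $\varsigma^*$.

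The main obstacle, and the place where the bulk of the technical work lies, is the coupling estimate at the level of \emph{measures} rather than points. In the classical finite-dimensional extremal shift one compares two points $x(t)$ and $y(t)$; here one must compare a measure $m(t)\in\mathcal P^2(\td)$ driven by a distribution $\varkappa_k$ of constant controls with a measure $\widetilde{\mu(t)}$ evolving by a nonlinear Kolmogorov ODE, and the natural ``distance'' $W_2$ is realized through an optimal transport plan that itself moves in time. The right device is to fix, at each $t_k$, an optimal coupling $\pi_k\in\Pi(m(t_k),\widetilde{\mu(t_k)})$ and use its disintegration to define the first player's state-dependent control (this is why the distribution-of-controls formalism of Section~\ref{sub:sect:MFTDG} and the measurable selection $\ell$ are needed), then estimate $\int\|x-\bar y\|^2$ along the coupled flow, absorbing the fact that mass in the Markov chain jumps between lattice points rather than moving continuously — this is exactly what~\eqref{assumption:ineq:drift} (first-order consistency) and~\eqref{assumption:ineq:violance} (second-order smallness, playing the role of vanishing viscosity) are designed to handle. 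Making the per-step error genuinely of the form $C(\varepsilon^2+\varepsilon\,d(\Delta)+(d(\Delta))^2+\varepsilon^2 d(\Delta))$ so that Grönwall produces precisely the stated constant $C^*$, while keeping the control selections weakly measurable so that the objects $\mathbb u^\Delta_k$, $\alpha_k$, $\varkappa_k$ of Definition~\ref{def:stepwise_firts} are legitimate, is the delicate part; I expect this to occupy Sections~\ref{sect:properties}--\ref{sect:extremal_shift}.
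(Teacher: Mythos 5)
Your overall architecture coincides with the paper's: a guide motion in the model game, extremal shift of the real system toward the guide, the viability property of the supersolution to control $\varphi^+$ along the guide, and a Gr\"onwall estimate for the squared Wasserstein distance. There is, however, one genuine error in your construction of the guide that would break the key cancellation. You assign to the \emph{second} player of the model game the job of keeping $\varphi^+$ non-increasing; the characterization~\ref{def:part:supersolution_viability} of a supersolution goes the other way: for every \emph{constant} control $\vartheta_{\mathcal S}$ of the second player there is a response $\gamma_{\mathcal S}(\cdot)$ of the \emph{first} player along which $\varphi^+$ does not increase. More importantly, you never specify how the second player's control in the guide is chosen, and this choice is not free: the extremal shift here is two-sided. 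In the cross term of the coupling estimate one compares $\langle f(t_k,x_*,m(t_k),u_*,v),\ell(x_*,\bar y_*)\rangle$, with $v$ adversarial, against $\langle f(t_k,\bar y_*,\widetilde{\mu(t_k)},u,v_{\mathrm{guide}}),\ell(x_*,\bar y_*)\rangle$, with $u$ dictated by the supersolution and hence arbitrary from the coupling's viewpoint. The Isaacs condition kills this difference only if $v_{\mathrm{guide}}=\hat v(t_k,x_*,\bar y_*,m(t_k))$ as in (\ref{intro:v_hat}): then $\langle\ell,f(u_*,v)\rangle\leq\min_u\max_v\langle\ell,f\rangle=\max_v\min_u\langle\ell,f\rangle\leq\langle\ell,f(u,v_*)\rangle$ for all $u,v$. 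So the guide must be driven by the pair consisting of the supersolution response of player one \emph{to} the extremal-shift control $\hat v$ of player two, pushed through the optimal plan $\pi_k$ as in (\ref{intro:beta_k_def})--(\ref{intro:zeta_k}). With your role assignment the first player's control in the guide is unconstrained and the cross term cannot be cancelled.

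A second, smaller point you flag but do not resolve: conditioning the optimal plan $\pi_k$ on a pair $(x_*,\bar y_*)$ forces the second player's control of the lattice particle started at $\bar y_*$ to be the Dirac mass at $\hat v(t_k,x_*,\bar y_*,m(t_k))$ rather than the averaged control $\vartheta_{k,\bar y_*}$ that actually drives $\mu(\cdot)$; the law of the coupled lattice flow at $t_{k+1}$ is therefore not $\widetilde{\mu(t_{k+1})}$ but the auxiliary measure $\eta(t_{k+1})$ of (\ref{intro:eta_final}), and one needs the separate estimate $\|\mu(t)-\eta(t)\|_2\leq C_5\, d(\Delta)$ of Lemma~\ref{lm:mu_eta_estiame} before Gr\"onwall can be applied to $W_2^2(m(\cdot),\widetilde{\eta(\cdot)})$. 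This is a repairable omission rather than a wrong turn, but it is precisely where part of the $\varsigma^*(d(\Delta))$ term comes from.
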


This theorem is proved in Sections~\ref{sect:construction_suboptimal}--\ref{sect:extremal_shift}.

Furthermore, Theorem~\ref{th:existence_strategy} implies the following estimates on the value function of the mean field type differential game.
\begin{corollary}\label{corollary:value} Let $\varphi^+$ and $\varphi^-$ be a supersolution and a subsolution of (\ref{eq:HJ}) respectively. If $t_0\in [0,T]$, $m_0\in\ptd$, then
	$$\varphi^-(t_0,\operatorname{pr}_{\mathcal{S}}(m_0))-\varsigma_g(C^*\varepsilon)\leq \operatorname{Val}(t_0,m_0)\leq \varphi^+(t_0,\operatorname{pr}_{\mathcal{S}}(m_0))+\varsigma_g(C^*\varepsilon). $$ 
	If $\varphi$ is the viscosity/minimax solution of (\ref{eq:HJ}), then
	\[
	|\operatorname{Val}(t_0,m_0)-\varphi(t_0,\operatorname{pr}_{\mathcal{S}}(m_0))|\leq \varsigma_g(C^*\varepsilon).
	\]
\end{corollary}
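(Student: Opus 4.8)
The plan is to deduce the corollary directly from Theorem~\ref{th:existence_strategy} by letting the mesh of the partition go to zero, and then combining the two one-sided bounds. First I would fix $t_0\in[0,T]$ and $m_0\in\ptd$, take a supersolution $\varphi^+$ of~(\ref{eq:HJ}), and consider a sequence of partitions $\Delta_n$ of $[t_0,T]$ with $d(\Delta_n)\to 0$. By part~(i) of Theorem~\ref{th:existence_strategy}, for each $n$ there is a first player's strategy with memory $\mathbb{u}^{\Delta_n}$ such that every $m(\cdot)\in\mathcal{X}_1(t_0,m_0,\Delta_n,\mathbb{u}^{\Delta_n})$ satisfies $g(m(T))\leq \varphi^+(t_0,\operatorname{pr}_{\mathcal{S}}(m_0))+\varsigma_g(C^*\varepsilon+\varsigma^*(d(\Delta_n)))$. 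Taking the supremum over $m(\cdot)$ and then the infimum over $n$ (equivalently, over all partitions and strategies), the definition~(\ref{intro:upper_value}) of $\operatorname{Val}^+$, together with the identity $\operatorname{Val}^+=\operatorname{Val}$ recalled at the end of Subsection~\ref{sub:sect:MFTDG}, gives
\[
\operatorname{Val}(t_0,m_0)\leq \varphi^+(t_0,\operatorname{pr}_{\mathcal{S}}(m_0))+\inf_n\varsigma_g(C^*\varepsilon+\varsigma^*(d(\Delta_n))).
\]
Since $g$ is continuous on the compact metric space $\ptd$, it is uniformly continuous, so $\varsigma_g$ is finite, nondecreasing, and $\varsigma_g(\theta)\to\varsigma_g(C^*\varepsilon)$ as $\theta\downarrow C^*\varepsilon$ provided $\varsigma_g$ is right-continuous at $C^*\varepsilon$; more robustly, since $\varsigma^*(d(\Delta_n))\to 0$, we get $\limsup_n\varsigma_g(C^*\varepsilon+\varsigma^*(d(\Delta_n)))\le \inf_{\theta>C^*\varepsilon}\varsigma_g(\theta)$, and one checks from the definition of the modulus of continuity that this infimum equals $\varsigma_g(C^*\varepsilon)$ (any pair $m',m''$ with $W_2(m',m'')\le C^*\varepsilon$ is also admissible for every $\theta>C^*\varepsilon$, so the sup defining $\varsigma_g(C^*\varepsilon)$ is $\le\varsigma_g(\theta)$; conversely each $\varsigma_g(\theta)$ is approached by pairs at distance $\le\theta$, which can be perturbed to distance $\le C^*\varepsilon$ only in the limit — so strictly I would instead just note $\varsigma_g$ is u.s.c.\ from the right at $C^*\varepsilon$ because it is a sup over a closed-in-$\theta$ family, hence $\inf_{\theta>C^*\varepsilon}\varsigma_g(\theta)=\varsigma_g(C^*\varepsilon)$). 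This yields the upper bound $\operatorname{Val}(t_0,m_0)\leq \varphi^+(t_0,\operatorname{pr}_{\mathcal{S}}(m_0))+\varsigma_g(C^*\varepsilon)$.

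Symmetrically, applying part~(ii) of Theorem~\ref{th:existence_strategy} to a subsolution $\varphi^-$ and the same sequence of partitions, taking the infimum over $m(\cdot)\in\mathcal{X}_2(t_0,m_0,\Delta_n,\mathbb{v}^{\Delta_n})$ and then the supremum over $n$, and using definition~(\ref{intro:lower_value}) together with $\operatorname{Val}^-=\operatorname{Val}$, gives $\operatorname{Val}(t_0,m_0)\geq \varphi^-(t_0,\operatorname{pr}_{\mathcal{S}}(m_0))-\varsigma_g(C^*\varepsilon)$. Combining the two displays proves the first assertion. For the second assertion, recall from the end of Section~\ref{sect:finite_state_games} that the viscosity/minimax solution $\varphi$ of~(\ref{eq:HJ}) is simultaneously a supersolution and a subsolution; applying the first assertion with $\varphi^+=\varphi^-=\varphi$ gives
\[
\varphi(t_0,\operatorname{pr}_{\mathcal{S}}(m_0))-\varsigma_g(C^*\varepsilon)\leq \operatorname{Val}(t_0,m_0)\leq \varphi(t_0,\operatorname{pr}_{\mathcal{S}}(m_0))+\varsigma_g(C^*\varepsilon),
\]
which is exactly $|\operatorname{Val}(t_0,m_0)-\varphi(t_0,\operatorname{pr}_{\mathcal{S}}(m_0))|\leq \varsigma_g(C^*\varepsilon)$.

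The only genuinely nonroutine point is the limiting argument handling the term $\varsigma^*(d(\Delta))$: one must make sure the passage $d(\Delta)\to 0$ is legitimate inside $\varsigma_g$, which rests on $\varsigma^*(\epsilon)\to 0$ as $\epsilon\to0$ (stated in the theorem) and on the elementary monotonicity/right-continuity properties of a modulus of continuity of a continuous function on the compact space $\ptd$. Everything else is a direct unwinding of the definitions of $\operatorname{Val}^\pm$ and the equality of the four value functions established in~\cite{Averbokh_mfdg}. I expect the "main obstacle," such as it is, to be simply stating the modulus-of-continuity limit cleanly; no new estimates are needed beyond Theorem~\ref{th:existence_strategy}.
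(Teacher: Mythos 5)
Your argument is correct and is essentially the paper's (implicit) proof: the corollary is stated without proof as an immediate consequence of Theorem~\ref{th:existence_strategy}, obtained exactly as you do by letting $d(\Delta)\to 0$ in the two one-sided bounds, invoking $\operatorname{Val}^+=\operatorname{Val}^-=\operatorname{Val}$, and noting that a minimax solution is both a super- and a subsolution. The only point needing care, the passage $\varsigma_g(C^*\varepsilon+\varsigma^*(d(\Delta)))\to\varsigma_g(C^*\varepsilon)$, is handled adequately by your right-continuity remark, which holds here because $\ptd$ is compact so the supremum defining $\varsigma_g(\theta)$ is attained and a limiting pair for $\theta_n\downarrow C^*\varepsilon$ lies at distance at most $C^*\varepsilon$.
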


\section{Design of the approximating Markov chain}\label{sect:example_construction}
In this section, given a system (\ref{sys:agent_mfdg}), we construct an approximating Markov chain.
Here we assume additionally that the function $f$ is Lipschitz continuous with respect to $m$ in $1$-Wasserstein metric, i.e., for every $t\in [0,T]$, $x\in\td$, $m_1,m_2\in\mathcal{P}^1(\td)=\ptd$, $u\in U$, $v\in V$,
$$\|f(t,x,m_1,u,v)-f(t,x,m_2,u,v)\|\leq L'' W_1(m_1,m_2) $$ for some constant $L''$.

Let $h>0$ be such that $1/h\in\mathbb{N}$. Put $$\mathcal{S}_h\triangleq h\mathbb{Z}^d\cap \td.$$ To define the approximating Markov chain, we  write the dynamics in the coordinatewise form $$f(t,x,m,u,v)=(f_1(t,x,m,u,v),\ldots,f_d(t,x,m,u,v)).$$ Let $e^i$ stand for the  $i$-th coordinate vector.
The approximating Markov chain is defined as follows. For $t\in [0,T]$, $\bar{x},\bar{y}\in\mathcal{S}_h$, $\mu\in\Sigma$, $u\in U$, $v\in V$, set
\begin{equation}\label{intro:Kolmogorov_f_general}
Q^h_{\bar{x},\bar{y}}(t,\mu,u,v)\triangleq \left\{\begin{array}{cc}
\frac{1}{h}|f_i(t,x,\tilde{\mu},u,v)|, & \bar{y}=\bar{x}+h\operatorname{sgn}(f_i(t,x,\tilde{\mu},u,v))e^i, \\
-\frac{1}{h}\sum_{j=1}^d|f_j(t,x,\tilde{\mu},u,v)|, & \bar{x}=\bar{y}, \\
0, & \text{otherwise}.
\end{array}\right. 
\end{equation}

\begin{proposition}\label{prop:Kolmogorov_lattice} The matrix $Q$ defined by (\ref{intro:Kolmogorov_f_general}) satisfies condition~\ref{cond:M_Kolmogorov}--\ref{cond:M_Lipschitz} and (\ref{assumption:ineq:S_td})--(\ref{assumption:ineq:violance}) with \begin{equation}\label{intro:equality:vareps:approx:lattiice}
	\varepsilon=\sqrt{h}\cdot\max\left\{\sqrt{R}\sqrt[4]{d},\frac{\sqrt[4]{d}}{\sqrt{2}}\right\}.
	\end{equation}
\end{proposition}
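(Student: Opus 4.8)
The plan is to verify each required property of $Q^h$ in turn, treating conditions~\ref{cond:M_Kolmogorov}--\ref{cond:M_Lipschitz} as routine and reserving the real work for the three approximation inequalities~(\ref{assumption:ineq:S_td})--(\ref{assumption:ineq:violance}). First, condition~\ref{cond:M_Kolmogorov} is immediate from the definition~(\ref{intro:Kolmogorov_f_general}): the off-diagonal entries are of the form $\frac1h|f_i|\geq 0$, and the diagonal entry is exactly the negative of the sum of the off-diagonal entries in that row, so each row sums to zero. For condition~\ref{cond:M_continuity}, note that $(t,\mu,u,v)\mapsto f_i(t,\bar x,\tilde\mu,u,v)$ is continuous (since $f$ is continuous and $\mu\mapsto\tilde\mu$ is continuous from $\Sigma$ into $\mathcal P^2(\td)$), hence so is $|f_i|$; the only subtlety is that the \emph{target} state $\bar y=\bar x+h\operatorname{sgn}(f_i)e^i$ jumps as $f_i$ changes sign, but at such points $|f_i|=0$, so the entry $Q^h_{\bar x,\bar y}$ for each fixed $\bar y$ is continuous (it vanishes where the jump would occur). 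Condition~\ref{cond:M_Lipschitz} follows from the added $1$-Wasserstein Lipschitz hypothesis on $f$ together with~(\ref{ineq:wasser_R_d}): indeed $|f_i(t,\bar x,\widetilde{\mu^1},u,v)-f_i(t,\bar x,\widetilde{\mu^2},u,v)|\le L''W_1(\widetilde{\mu^1},\widetilde{\mu^2})\le L''C_2(\|\mu^1-\mu^2\|_1)^{1}$ — here I would take $p=1$ in Proposition~\ref{prop:metrics} and then compare $\|\cdot\|_1$ with $\|\cdot\|_2$ by the standard finite-dimensional norm equivalence, yielding $L'=L''C_2/h$ times a dimensional factor (the reciprocal $h$ being harmless since $h$ is fixed).

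For~(\ref{assumption:ineq:S_td}): every point of $\td$ lies within $\ell^\infty$-distance $h/2$ of a point of $h\mathbb Z^d$, hence within Euclidean distance $\frac{\sqrt d}{2}h$ of $\mathcal S_h$. Since $\varepsilon$ as defined in~(\ref{intro:equality:vareps:approx:lattiice}) is at least $\sqrt h\cdot\frac{\sqrt[4]d}{\sqrt2}=\sqrt{h/2}\,\sqrt[4]d$, and for $h\le 1$ one has $\frac{\sqrt d}{2}h\le \sqrt{h/2}\,\sqrt[4]d$ (equivalently $\frac{\sqrt d}{2}\sqrt h\le \frac{\sqrt[4]d}{\sqrt2}$, i.e. $\sqrt{h}\sqrt[4]d\le\sqrt2$, which one should note requires restricting to $h$ small — I would simply record that~(\ref{intro:equality:vareps:approx:lattiice}) is the correct bound in the regime $h\le 1/\sqrt d$, or absorb a further dimensional constant), the claim holds. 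For~(\ref{assumption:ineq:violance}): from a state $\bar x$ the only reachable states are $\bar x\pm he^i$, at squared distance $h^2$, with total rate $\sum_i\frac1h|f_i(t,\bar x,\tilde\mu,u,v)|$; hence $\sum_{\bar y}\|\bar y-\bar x\|^2Q^h_{\bar x,\bar y}=h^2\cdot\frac1h\sum_i|f_i|=h\sum_{i=1}^d|f_i|\le h\sqrt d\,\|f\|\le h\sqrt d\,R$ by Cauchy--Schwarz and~(\ref{intro:R}); so this is $\le\varepsilon^2$ whenever $\varepsilon^2\ge hR\sqrt d$, which is exactly $\varepsilon\ge\sqrt h\sqrt R\sqrt[4]d$, matching the first alternative in~(\ref{intro:equality:vareps:approx:lattiice}).

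The main obstacle is~(\ref{assumption:ineq:drift}), the drift-matching estimate. Here one computes
\[
\sum_{\bar y\neq\bar x}\ell(\bar y,\bar x)Q^h_{\bar x,\bar y}(t,\mu,u,v)=\sum_{i=1}^d\ell\bigl(\bar x+h\operatorname{sgn}(f_i)e^i,\bar x\bigr)\cdot\tfrac1h|f_i(t,\bar x,\tilde\mu,u,v)|.
\]
The key point is that for $h\le 1/2$ the minimal-norm representative of $(\bar x+h\operatorname{sgn}(f_i)e^i)-\bar x$ in $\td=\rd/\mathbb Z^d$ is simply $h\operatorname{sgn}(f_i)e^i$ (the torus wrap-around does not interfere because $h<1/2$), so $\ell(\bar x+h\operatorname{sgn}(f_i)e^i,\bar x)=h\operatorname{sgn}(f_i(t,\bar x,\tilde\mu,u,v))e^i$, and the $i$-th term equals $\operatorname{sgn}(f_i)|f_i|e^i=f_i(t,\bar x,\tilde\mu,u,v)e^i$. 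Summing over $i$ gives exactly $f(t,\bar x,\tilde\mu,u,v)$, so the difference in~(\ref{assumption:ineq:drift}) is literally $0\le\varepsilon$. Thus the real content of this step is the careful justification that $\ell$ returns the unwrapped vector, which relies on $h$ being small enough that no coordinate step crosses the $\mathbb Z^d$-period; I would state this explicitly as the one place where $1/h\in\mathbb N$ and $h\le 1$ (indeed $h\le 1/2$) are used, and then the inequality is trivially satisfied with room to spare. Finally, I would collect the three lower bounds on $\varepsilon$ — $\sqrt h\sqrt R\sqrt[4]d$ from~(\ref{assumption:ineq:violance}), $\sqrt{h/2}\,\sqrt[4]d$ from~(\ref{assumption:ineq:S_td}), and $0$ from~(\ref{assumption:ineq:drift}) — and observe that their maximum is the quantity displayed in~(\ref{intro:equality:vareps:approx:lattiice}), completing the proof.
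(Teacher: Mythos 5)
Your proposal is correct and follows essentially the same route as the paper: row-sum and continuity checks for \ref{cond:M_Kolmogorov}--\ref{cond:M_continuity}, the $W_1$-Lipschitz hypothesis plus Proposition~\ref{prop:metrics} and H\"older for \ref{cond:M_Lipschitz}, the exact cancellation $\sum_{\bar y}\ell(\bar y,\bar x)Q^h_{\bar x,\bar y}=f$ for (\ref{assumption:ineq:drift}), and the Cauchy--Schwarz bound $h\sum_i|f_i|\le R\sqrt d\,h$ for (\ref{assumption:ineq:violance}). Your added observations --- that the drift identity needs $h$ small enough to avoid torus wrap-around in $\ell$, and that (\ref{assumption:ineq:S_td}) with the stated $\varepsilon$ requires $h\le 2/\sqrt d$ --- are legitimate refinements the paper passes over silently, and do not change the substance of the argument.
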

\begin{proof}
First, let us check whether the matrix $Q^h$ satisfies conditions~\ref{cond:M_Kolmogorov}--\ref{cond:M_Lipschitz}. Indeed, direct calculations yields that
$$\sum_{\bar{y}\in\mathcal{S}_h,\bar{y}\neq\bar{x}}Q^h_{\bar{x},\bar{y}}(t,\mu,u,v)=\sum_{i=1}^d\frac{1}{h}|f_i(t,x,\tilde{\mu},u,v)|-\frac{1}{h}\sum_{j=1}^d|f_j(t,x,\tilde{\mu},u,v)|=0. $$ This gives condition~\ref{cond:M_Kolmogorov}. Condition~\ref{cond:M_continuity} follows from very definition of the matrix $Q^h$. Now, recall that we additionally assumed that the function $f$ is Lipschitz continuous with respect to the 1-Wasserstein metric. This implies that
$$|Q^h_{\bar{x},\bar{y}}(t,\mu^1,u,v)-Q^h_{\bar{x},\bar{y}}(t,\mu^2,u,v)|\leq \frac{L''}{h}W_1(\widetilde{\mu^1},\widetilde{\mu^2}). $$ From Proposition~\ref{prop:metrics} it follows that
$$|Q^h_{\bar{x},\bar{y}}(t,\mu^1,u,v)-Q^h_{\bar{x},\bar{y}}(t,\mu^2,u,v)|\leq \frac{C_2 L''}{h}\|\mu^1-\mu^2\|_1.$$ Using Holder's inequality we deduce condition~\ref{cond:M_Lipschitz}.

Now let us show that the constructed Markov chain approximated the original mean field type control system. We have that
$$\max_{x\in\td}\min_{\bar{y}\in\mathcal{S}}\|x-\bar{y}\|\leq \sqrt{d}h/2. $$ Further, let $t\in [0,T]$ $\bar{x}\in\mathcal{S}$, $\mu\in\Sigma$, $u\in U$, $v\in V$. We have that
\begin{equation*}\begin{split}
\sum_{\bar{y}\in\mathcal{S}_h,\bar{y}\neq\bar{x}}(\bar{y}-\bar{x})Q^h_{\bar{x},\bar{y}}(t,\mu,u,v)&= \sum_{i=1}^dh\operatorname{sgn}(f_i(t,x,\tilde{\mu},u,v))\frac{1}{h}|f_i(t,x,\tilde{\mu},u,v)|e^i\\ &=
\sum_{i=1}^{d}f_i(t,x,\tilde{\mu},u,v)e^i=f(t,x,\tilde{\mu},u,v).
\end{split}
\end{equation*} Finally, we have that
\begin{equation*}\begin{split}
\sum_{\bar{y}\in\mathcal{S}_h}\|\bar{y}-\bar{x}\|^2Q^h_{\bar{x},\bar{y}}(t,\mu,u,v)&= \sum_{i=1}^d\frac{h^2}{h}|f_i(t,x,\tilde{\mu},u,v)|\\ &\leq
h\sqrt{d}\left[\sum_{i=1}^{d}|f_i(t,x,\tilde{\mu},u,v)|^2\right]^{1/2}\leq R\sqrt{d}h,
\end{split}
\end{equation*} where the constant $R$ estimates the norm of $f$ (see (\ref{intro:R})).
 Thus, conditions (\ref{assumption:ineq:S_td})--(\ref{assumption:ineq:violance})  hold with  $\varepsilon$ determined by (\ref{intro:equality:vareps:approx:lattiice}).
\end{proof}

For the Markov chain with Kolmogorov matrix given by (\ref{intro:Kolmogorov_f_general}), we have that
\[\begin{split}
\mathcal{H}^{\mathcal{Q}}(t,\mu,w)=\frac{1}{h}\sum_{\bar{x}\in \mathcal{S}_h}\min_{\gamma_{\bar{x}}\in\mathcal{U}_{\text{instant}}}\max_{\vartheta_{\bar{x}}\in\mathcal{V}_{\text{instant}}}\int_U\int_V
	\sum_{i=1}^d |f_i&(t,\bar{x},\mu,u,v)|\\(w_{\bar{x}+he^i\operatorname{sgn}(f_i(t,\bar{x},\mu,u,v))}&-w_{\bar{x}})\gamma_{\bar{x}}(du)\vartheta_{\bar{x}}(dv).\end{split}
	\]

The computations of the Hamiltonian are simplified when we assume that 
$$f(t,x,m,u,v)=f^1(t,x,m,u)+f^2(t,x,m,v). $$ Here, we also assume that
$f^1$ and $f^2$ are Lipschitz continuous with respect to $m$ in $W_1$.
 As above we put $\mathcal{S}_h\triangleq \td\cap h\mathbb{Z}^d$. Representing the functions $f^1$ and $f^2$ in the coordinate-wise form
 $$f^1(t,x,m,u)=(f^1_1(t,x,m,u),\ldots, f^1_d(t,x,m,u)), $$
  $$f^2(t,x,m,v)=(f^2_1(t,x,m,v),\ldots, f^2_d(t,x,m,v)), $$ 
we introduce two Kolmogorov matrices for the dynamics $f^1$ and $f^2$. Set
\begin{equation*}\label{intro:Kolmogorov_f_1}
Q^{1,h}h_{\bar{x},\bar{y}}(t,\mu,u)\triangleq \left\{\begin{array}{cc}
\frac{1}{h}|f^1_i(t,x,\tilde{\mu},u)|, & \bar{y}=\bar{x}+h\operatorname{sgn}(f_i^1(t,x,\tilde{\mu},u))e^i, \\
-\frac{1}{h}\sum_{j=1}^d|f_j^1(t,x,\tilde{\mu},u)|, & \bar{x}=\bar{y}, \\
0, & \text{otherwise},
\end{array}\right. 
\end{equation*}
\begin{equation*}\label{intro:Kolmogorov_f_2}
Q^{2,h}h_{\bar{x},\bar{y}}(t,\mu,v)\triangleq \left\{\begin{array}{cc}
\frac{1}{h}|f^2_i(t,x,\tilde{\mu},v)|, & \bar{y}=\bar{x}+h\operatorname{sgn}(f_i^2(t,x,\tilde{\mu},v))e^i, \\
-\frac{1}{h}\sum_{j=1}^d|f_j^2(t,x,\tilde{\mu},v)|, & \bar{x}=\bar{y}, \\
0, & \text{otherwise},
\end{array}\right. 
\end{equation*}

\begin{equation}\label{intro:Kolmogorov_sum}
Q^{h}_{\bar{x},\bar{y}}(t,\mu,u,v)\triangleq Q^{1,h}_{\bar{x},\bar{y}}(t,\mu,u)+Q^{2,h}_{\bar{x},\bar{y}}(t,\mu,v).
\end{equation}

\begin{proposition}\label{prop:Kolmogorov_sum} The Kolmogorov matrix defined by (\ref{intro:Kolmogorov_sum}) satisfies conditions~\ref{cond:M_Kolmogorov}--\ref{cond:M_Lipschitz} and (\ref{assumption:ineq:S_td})--(\ref{assumption:ineq:violance}) with $\varepsilon$ given by (\ref{intro:equality:vareps:approx:lattiice}).
\end{proposition}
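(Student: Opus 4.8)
The plan is to reduce everything to the split structure $Q^h = Q^{1,h} + Q^{2,h}$ and observe that each summand is exactly of the form treated in Proposition~\ref{prop:Kolmogorov_lattice}, applied to the (one-player) drifts $f^1(t,x,m,u)$ and $f^2(t,x,m,v)$ respectively. First I would verify conditions~\ref{cond:M_Kolmogorov}--\ref{cond:M_continuity}: since $Q^{1,h}$ and $Q^{2,h}$ each satisfy the Kolmogorov property (off-diagonal entries nonnegative, rows summing to zero) by the same direct computation as in the proof of Proposition~\ref{prop:Kolmogorov_lattice}, the sum $Q^h$ does too, and continuity is immediate from the assumed continuity of $f^1,f^2$ together with the continuity of $\operatorname{sgn}|\cdot|$ on each orthant boundary being harmless (the entry $\frac1h|f_i^1|$ is continuous even across sign changes). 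For the Lipschitz condition~\ref{cond:M_Lipschitz}, I would estimate $|Q^h_{\bar x,\bar y}(t,\mu^1,u,v)-Q^h_{\bar x,\bar y}(t,\mu^2,u,v)|$ by the triangle inequality through $Q^{1,h}$ and $Q^{2,h}$, use the $W_1$-Lipschitz hypothesis on $f^1$ and $f^2$, then pass from $W_1(\widetilde{\mu^1},\widetilde{\mu^2})$ to $\|\mu^1-\mu^2\|_1$ via Proposition~\ref{prop:metrics}, and finally to $\|\mu^1-\mu^2\|_2$ by Hölder's inequality on the finite set $\mathcal S_h$, exactly as in the previous proof.

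For the approximation estimates (\ref{assumption:ineq:S_td})--(\ref{assumption:ineq:violance}), inequality (\ref{assumption:ineq:S_td}) depends only on the lattice $\mathcal S_h = \td\cap h\mathbb Z^d$ and not on the dynamics, so it holds with the same bound $\sqrt d\,h/2$. For the drift-matching estimate (\ref{assumption:ineq:drift}), I would compute $\sum_{\bar y\neq\bar x}\ell(\bar y,\bar x)Q^h_{\bar x,\bar y}$ by splitting it as $\sum_{\bar y\neq\bar x}(\bar y-\bar x)Q^{1,h}_{\bar x,\bar y} + \sum_{\bar y\neq\bar x}(\bar y-\bar x)Q^{2,h}_{\bar x,\bar y}$ — here I use that for neighbouring lattice points $\ell(\bar y,\bar x) = \bar y - \bar x$ provided $h$ is small enough (i.e.\ $h < 1$, which is implied by $1/h\in\mathbb N$, $h\le 1/2$, or can simply be assumed as in the previous section) — and each of the two sums equals $f^1(t,x,\tilde\mu,u)$, resp.\ $f^2(t,x,\tilde\mu,v)$, by the same telescoping identity as in Proposition~\ref{prop:Kolmogorov_lattice}; adding them gives exactly $f(t,x,\tilde\mu,u,v)$, so the left-hand side of (\ref{assumption:ineq:drift}) is in fact zero. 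The one genuinely non-additive point is the second-moment bound (\ref{assumption:ineq:violance}): because $Q^h_{\bar x,\bar y}$ can be a sum of two contributions only when the displacement $\bar y - \bar x = \pm h e^i$ receives mass from both $Q^{1,h}$ and $Q^{2,h}$ (when $f^1_i$ and $f^2_i$ have the same sign), one has $Q^h_{\bar x,\bar y}\le Q^{1,h}_{\bar x,\bar y}+Q^{2,h}_{\bar x,\bar y}$, hence
\[
\sum_{\bar y\in\mathcal S_h}\|\bar y-\bar x\|^2 Q^h_{\bar x,\bar y}
= \sum_{i=1}^d \frac{h^2}{h}\,|f_i(t,x,\tilde\mu,u,v)|
\le \sum_{i=1}^d h\bigl(|f^1_i| + |f^2_i|\bigr)
\le h\sqrt d\,\Bigl(\|f^1\|_2 + \|f^2\|_2\Bigr),
\]
and since $\|f^1\|+\|f^2\|$ is controlled by $R$ through $\|f\| = \|f^1+f^2\|$ (one may take the bound $R$ to majorize $\|f^1\|+\|f^2\|$ as well, which is consistent with (\ref{intro:R}) after possibly enlarging the constant), this is $\le R\sqrt d\,h$, the same bound as before.

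The main obstacle is purely bookkeeping rather than conceptual: one must be careful that in the split case the displacement attached to the entry $Q^h_{\bar x,\bar y}$ is still $\pm h e^i$ — i.e.\ the two matrices move the agent along the \emph{same} coordinate direction $e^i$, only possibly with opposite signs — so that $\|\bar y - \bar x\|^2 = h^2$ and the second-moment sum again collapses to $\sum_i h|f_i|$ rather than to something quadratic in $d$; and one must confirm that the constant $R$ from (\ref{intro:R}) (or a harmless enlargement thereof) dominates both $\|f^1\|$ and $\|f^2\|$ so that the final $\varepsilon$ in (\ref{intro:equality:vareps:approx:lattiice}) is literally unchanged. Once these two points are pinned down, the proof is a line-by-line transcription of the proof of Proposition~\ref{prop:Kolmogorov_lattice} with $f$ replaced alternately by $f^1$ and $f^2$ and the results summed.
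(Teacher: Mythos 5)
Your proposal is correct and follows exactly the route the paper intends: the paper gives no separate proof of Proposition~\ref{prop:Kolmogorov_sum}, stating only that it is similar to that of Proposition~\ref{prop:Kolmogorov_lattice}, and your argument is precisely that transcription, including the two points that genuinely need checking (the drift telescopes to $f^1+f^2=f$, and the second moment collapses to $\sum_i h(|f^1_i|+|f^2_i|)$ because all displacements are $\pm h e^i$). One cosmetic remark: in your display the first ``$=$'' should read $\sum_{\bar y}\|\bar y-\bar x\|^2 Q^h_{\bar x,\bar y}=\sum_i h\bigl(|f^1_i|+|f^2_i|\bigr)$ rather than $\sum_i h|f_i|$, but you immediately use the correct quantity and rightly flag that the constant $R$ must be taken to dominate $\|f^1\|+\|f^2\|$ (not just $\|f\|$) for $\varepsilon$ in (\ref{intro:equality:vareps:approx:lattiice}) to be unchanged.
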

The proof of this proposition is similar to the proof of Proposition~\ref{prop:Kolmogorov_lattice}.
In this case, we have that
\[\begin{split}
\mathcal{H}^{\mathcal{Q}}(t,\mu,w)=&\frac{1}{h}\sum_{\bar{x}\in \mathcal{S}_h}\min_{\gamma_{\bar{x}}\in\mathcal{U}_{\text{instant}}}\int_U
\sum_{i=1}^d |f_i^1(t,\bar{x},\mu,u)|(w_{\bar{x}+he^i\operatorname{sgn}(f_i^1(t,\bar{x},\mu,u))}-w_{\bar{x}})\gamma_{\bar{x}}(du)\\&+\frac{1}{h}\sum_{\bar{x}\in \mathcal{S}_h}\max_{\vartheta_{\bar{x}}\in\mathcal{V}_{\text{instant}}}\int_V
\sum_{i=1}^d |f_i^2(t,\bar{x},\mu,v)|(w_{\bar{x}+he^i\operatorname{sgn}(f_i^2(t,\bar{x},\mu,v))}-w_{\bar{x}})\vartheta_{\bar{x}}(dv)
.\end{split}
\]

\section{Properties of mean field type controlled processes}\label{sect:properties}
In this section, we study the properties of the original and model  systems. The results proved below are used in the proof of Theorem~\ref{th:existence_strategy}.

First, let us mention the following property of the flow of probabilities generated by the original mean field type dynamics.

\begin{lemma}\label{lm:deterministic_motion_estimate}
	If  $m(\cdot)=m(\cdot,s,m_*,\varkappa)$ for some $s$, $m_*\in\ptd$, and distribution of controls $\varkappa\in\mathcal{D}[m_*]$, then 
	\[W_2^2(m(t),m(s))\leq R^2(t-s)^2.
	\]
\end{lemma}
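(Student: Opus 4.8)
The plan is to exploit the representation of the flow of probabilities in terms of an underlying measure $\chi\in\mathcal{P}^2(\mathcal{C}_{s,r})$ on trajectory space, as provided by Definition~\ref{def:motion}. Recall that $m(\cdot)=m(\cdot,s,m_*,\varkappa)$ means precisely that there is $\chi\in\mathcal{P}^2(\mathcal{C}_{s,r})$ with $m(t)=e_t\sharp\chi$ and $\chi=\operatorname{traj}^{s,r}_{m(\cdot)}\sharp\varkappa$. The key point is that $\chi$-almost every trajectory $x(\cdot)$ solves $\frac{d}{dt}x(t)=\int_U\int_V f(t,x(t),m(t),u,v)\xi(du|t)\zeta(dv|t)$ for the appropriate relaxed controls, and hence satisfies the pointwise bound $\|x(t)-x(s)\|\le \int_s^t R\,d\tau = R(t-s)$ by the uniform bound (\ref{intro:R}) on $f$.

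First I would fix $t\in[s,r]$ and observe that the map $x(\cdot)\mapsto(x(s),x(t))$, composed with $\chi$, is an element of $\Pi(m(s),m(t))$ since its marginals are $e_s\sharp\chi=m(s)$ and $e_t\sharp\chi=m(t)$. Therefore, by the definition of $W_2$ as an infimum over couplings,
\[
W_2^2(m(t),m(s))\le \int_{\mathcal{C}_{s,r}}\|x(t)-x(s)\|^2\,\chi(dx(\cdot)).
\]
Then I would use the trajectory bound $\|x(t)-x(s)\|\le R(t-s)$, valid for $\chi$-a.e. $x(\cdot)$, to conclude that the integral is at most $R^2(t-s)^2$, which is exactly the claim. (One should note $t\ge s$ so $t-s\ge 0$; the squared statement is symmetric anyway.)

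There is essentially no hard part here — the statement is a routine consequence of the uniform velocity bound and the coupling definition of the Wasserstein distance. The only mild subtlety worth spelling out is the measurability/existence of the coupling: that $(e_s,e_t)\sharp\chi$ is a genuine transport plan between $m(s)$ and $m(t)$, which follows immediately from $\operatorname{p}^1\sharp\big((e_s,e_t)\sharp\chi\big)=e_s\sharp\chi=m(s)$ and similarly for the second marginal. I would also briefly justify the pointwise trajectory estimate: since $\xi(\cdot|t)$ and $\zeta(\cdot|t)$ are probability measures, $\big\|\int_U\int_V f(t,x(t),m(t),u,v)\xi(du|t)\zeta(dv|t)\big\|\le R$ by (\ref{intro:R}), so integrating in time gives $\|x(t)-x(s)\|\le R(t-s)$ for every trajectory in the support of $\chi$.
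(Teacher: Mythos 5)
Your proof is correct and fills in exactly the argument the paper leaves implicit: the paper's own proof is the one-line remark that the claim "follows directly from Definition~\ref{def:motion} and the estimate $\|f\|\leq R$," and your coupling $(e_s,e_t)\sharp\chi$ together with the pointwise trajectory bound is precisely the intended elaboration. No issues.
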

\begin{proof}
	The conclusion of the lemma follows directly from  definition (\ref{def:motion}) and the estimate $\|f(t,x,u,v)\|\leq R$.
\end{proof}

We obtain the properties of the approximating system using the nonlinear Markov processes. To define this object,  it is convenient to introduce the generators (see~\cite{fleming_soner},~\cite{Kol_book},~\cite{Kol_markov}).  The Markov chain with Kolmogorov matrix $\mathcal{Q}(t,\mu,\gamma_{\mathcal{S}},\vartheta_{\mathcal{S}})$ corresponds to the generator $\Lambda_t^{\mathcal{Q}}[\mu,\gamma_{\mathcal{S}},\vartheta_{\mathcal{S}}]$ acting on $C(\mathcal{S})$ by the rule: if $\phi\in C(\mathcal{S})$, $\bar{x}\in\mathcal{S}$, then
\begin{equation}\label{intro:generator_Markov}
\Lambda_t^{\mathcal{Q}}[\mu,\gamma_{\mathcal{S}},\vartheta_{\mathcal{S}}]\phi(\bar{x})\triangleq \sum_{\bar{y}\in \mathcal{S}}[\phi(\bar{y})-\phi(\bar{x})]\mathcal{Q}_{\bar{x},\bar{y}}(t,\mu,\gamma_{\bar{x}},\vartheta_{\bar{x}}).
\end{equation}

\begin{definition}\label{def:Markov_motion}
	Given an initial distribution $\nu_*=(\nu_{*,\bar{x}})_{\bar{x}\in\mathcal{S}}$, a function $[s,r]\ni t\mapsto\mu(t)\in\Sigma$, players' strategies $[s,r]\ni t\mapsto \gamma_{\mathcal{S}}(t)\in\mathcal{U}_{\text{instant}}^{\mathcal{S}}$ and $[s,r]\ni t\mapsto \vartheta_{\mathcal{S}}(t)\in\mathcal{V}_{\text{instant}}^{\mathcal{S}}$, we say that $(\Omega,\mathcal{F},\{\mathcal{F}_t\}_{t\in [s,r]},P,X)$ is a Markov chain generated by  the Kolmogorov matrix $\mathcal{Q}(t,\mu(t),\gamma_{\mathcal{S}}(t),\vartheta_{\mathcal{S}}(t))$ if 
	\begin{itemize}
		\item $(\Omega,\mathcal{F},\{\mathcal{F}_t\}_{t\in [s,r]},P)$ is a filtered probability space;
		\item $X(\cdot)$ is a $\{\mathcal{F}_t\}_{t\in [s,r]}$-adapted stochastic  taking vales in $\mathcal{S}$;
		\item for every function $\phi\in C(\mathcal{S})$, 
		\begin{equation}\label{property:phi_martinglate}
		\phi(X(t))-\int_{s}^{t}\Lambda^{\mathcal{Q}}_\tau[\mu(\tau),\gamma_{\mathcal{S}}(\tau),\vartheta_{\mathcal{S}}(\tau)]\phi(X(\tau))d\tau 
		\end{equation} is a $\{\mathcal{F}_t\}_{t\in [s,r]}$-martingale; 
		\item $P(X(s)=\bar{x})=\nu_{*,\bar{x}}$.
	\end{itemize} Hereinafter, we denote by $\mathbb{E}$  the expectation according to the probability $P$.
\end{definition}

\begin{remark}\label{remark:existence_skorokhod} From \cite{Kol_markov} it follows that there exists at least one Markov chain generated by the Kolmogorov matrix  $\mathcal{Q}(t,\mu(t),\gamma_{\mathcal{S}}(t),\vartheta_{\mathcal{S}}(t))$.
	Furthermore, one can  assume that $\Omega=D([s,r],\mathcal{S})$, $\mathcal{F}$ is equal to the family of Borel sets on $D([s,r],\mathcal{S})$, whilst $\mathcal{F}_t$ is the family of sets such that their natural projections on $D([s,t],\mathcal{S})$ are Borel, whereas their natural projections on $[t,s]$ are equal to $D([t,r],\mathcal{S})$. Here $D([s,r],\mathcal{S})$ stands the Skorokhod space of c\`{a}dl\`{a}g functions on $[s,r]$ taking values in $\mathcal{S}$.\end{remark}

Notice that, if we denote \[\nu_{\bar{x}}(t)\triangleq P(X(t)=\bar{x}),\] then $\nu(\cdot)$ satisfies the forward Kolmogorov differential equation (\ref{eq:nu}). In particular, if $\nu_*=\mu(s)$, then $\nu(\cdot)=\mu(\cdot)$.

Set
\begin{equation*}\label{intro:R_1}
R_1\triangleq 1+2(1+Rd),
\end{equation*}
\begin{equation*}\label{intro:varsigma_1}
\varsigma_1(\epsilon)\triangleq\frac{4}{3}(1+R)R_1\epsilon^{1/2}.
\end{equation*} Notice that $\varsigma_1(\epsilon)\rightarrow 0$ as $\epsilon\rightarrow 0$. 

\begin{lemma}\label{lm:estimate} Let $s,r\in [0,T]$, $s<r$, $\gamma_{\mathcal{S}}(\cdot)$ and $\vartheta_{\mathcal{S}}(\cdot)$ be measurable functions taking values in $\mathcal{U}_{\text{instant}}^{\mathcal{S}}$ and $\mathcal{V}_{\text{instant}}^{\mathcal{S}}$ respectively, and let $\bar{z}\in\mathcal{S}$. If $(\Omega,\mathcal{F},\{\mathcal{F}_t\}_{t\in [s,r]},P,X)$ is a Markov chain generated by  the Kolmogorov matrix $\mathcal{Q}(t,\mu(t),\gamma_{\mathcal{S}}(t),\vartheta_{\mathcal{S}}(t))$ and initial distribution at time $s$ equal to $\mathbb{1}_{\bar{z}}$, then 
	$$\mathbb{E}\|X(t)-\bar{z}\|^2\leq R_1^2(t-s), $$ and
	$$\mathbb{E}\|X(t)-\bar{z}\|^2\leq \varepsilon^2 (t-s)+\varsigma_1(t-s)\cdot (t-s). $$
\end{lemma}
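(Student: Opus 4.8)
The plan is to estimate $\mathbb{E}\|X(t)-\bar z\|^2$ by applying the martingale property in Definition~\ref{def:Markov_motion} to the function $\phi(\bar x)\triangleq \|\bar x-\bar z\|^2$, which is continuous (in fact bounded) on $\mathcal{S}$. Taking expectations of the martingale $\phi(X(t))-\int_s^t \Lambda^{\mathcal Q}_\tau[\mu(\tau),\gamma_{\mathcal S}(\tau),\vartheta_{\mathcal S}(\tau)]\phi(X(\tau))\,d\tau$ and using $P(X(s)=\bar z)=1$ (so $\phi(X(s))=0$ a.s.) gives
\[
\mathbb{E}\|X(t)-\bar z\|^2 = \mathbb{E}\int_s^t \Lambda^{\mathcal Q}_\tau[\mu(\tau),\gamma_{\mathcal S}(\tau),\vartheta_{\mathcal S}(\tau)]\phi(X(\tau))\,d\tau,
\]
where, by (\ref{intro:generator_Markov}), $\Lambda^{\mathcal Q}_\tau[\mu,\gamma_{\mathcal S},\vartheta_{\mathcal S}]\phi(\bar x)=\sum_{\bar y\in\mathcal S}(\|\bar y-\bar z\|^2-\|\bar x-\bar z\|^2)\mathcal Q_{\bar x,\bar y}(\tau,\mu,\gamma_{\bar x},\vartheta_{\bar x})$.

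The next step is to bound this generator term pointwise in $\bar x$. Using $\mathcal Q_{\bar x,\bar x}=-\sum_{\bar y\neq\bar x}\mathcal Q_{\bar x,\bar y}$ (condition~\ref{cond:M_Kolmogorov}), I rewrite the sum as $\sum_{\bar y\neq\bar x}(\|\bar y-\bar z\|^2-\|\bar x-\bar z\|^2)\mathcal Q_{\bar x,\bar y}$. Now apply the parallelogram-type identity (\ref{equality:ell}) with the roles $x\mapsto\bar y$, $y\mapsto\bar x$, $z\mapsto\bar z$: this gives $\|\bar y-\bar z\|^2-\|\bar x-\bar z\|^2=\|\bar y-\bar x\|^2 - 2\langle\ell(\bar y,\bar z),\ell(\bar x,\bar z)\rangle$... wait, more carefully (\ref{equality:ell}) reads $\|x-y\|^2-\|y-z\|^2=\|x-z\|^2-2\langle\ell(x,z),\ell(y,z)\rangle$, so with $x=\bar y,y=\bar x,z=\bar z$ one gets $\|\bar y-\bar x\|^2-\|\bar x-\bar z\|^2=\|\bar y-\bar z\|^2-2\langle\ell(\bar y,\bar z),\ell(\bar x,\bar z)\rangle$, hence $\|\bar y-\bar z\|^2-\|\bar x-\bar z\|^2 = \|\bar y-\bar x\|^2 + 2\langle\ell(\bar y,\bar z),\ell(\bar x,\bar z)\rangle - 2\|\bar x-\bar z\|^2$. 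A cleaner route: since $\ell(\bar x,\bar z)$ is a representative of $\bar x-\bar z$ of minimal norm and $\|\bar y-\bar x\|\le\|\ell(\bar y,\bar z)-\ell(\bar x,\bar z)\|$, expand $\|\ell(\bar y,\bar z)\|^2-\|\ell(\bar x,\bar z)\|^2 = \|\ell(\bar y,\bar z)-\ell(\bar x,\bar z)\|^2 + 2\langle\ell(\bar y,\bar z)-\ell(\bar x,\bar z),\ell(\bar x,\bar z)\rangle$. The key point is that after summing against $\mathcal Q_{\bar x,\bar y}$, the ``linear'' term $\sum_{\bar y\neq\bar x}\langle\ell(\bar y,\bar x)\text{-ish},\ell(\bar x,\bar z)\rangle\mathcal Q_{\bar x,\bar y}$ is controlled by assumption (\ref{assumption:ineq:drift}) — which says $\sum_{\bar y\neq\bar x}\ell(\bar y,\bar x)Q_{\bar x,\bar y}$ is within $\varepsilon$ of $f(t,\bar x,\tilde\mu,u,v)$ — together with $\|f\|\le R$ and $\|\ell(\bar x,\bar z)\|=\|\bar x-\bar z\|\le\text{diam}(\td)$; and the ``quadratic'' term $\sum_{\bar y\neq\bar x}\|\bar y-\bar x\|^2\mathcal Q_{\bar x,\bar y}$ is at most $\varepsilon^2$ by assumption (\ref{assumption:ineq:violance}) (after integrating the feedback distributions $\gamma_{\bar x},\vartheta_{\bar x}$ against the $Q$-bound). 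Collecting these, one gets a pointwise bound of the form $\Lambda^{\mathcal Q}_\tau\phi(\bar x)\le \varepsilon^2 + c(R,d)\cdot(\varepsilon + \|\bar x-\bar z\|)$ for a suitable constant, and, bounding $\|\bar x-\bar z\|\le 1 + \|\bar x-\bar z\|^2$ or using $\sqrt{\mathbb E\|X(\tau)-\bar z\|^2}$, this feeds into a Grönwall / direct-integration argument.

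For the first (cruder) estimate $\mathbb{E}\|X(t)-\bar z\|^2\le R_1^2(t-s)$, I would use the trivial bound that the generator term is at most $R_1^2$ uniformly (using (\ref{assumption:ineq:violance}), (\ref{assumption:ineq:drift}), $\|f\|\le R$, $\varepsilon\le 1$, and $\|\bar x-\bar z\|\le\sqrt d$ on $\td$ — actually $\text{diam}(\td)\le\sqrt d/2$... I will track the exact constant to match $R_1=1+2(1+Rd)$), so that integrating over $[s,t]$ gives the linear-in-$(t-s)$ bound directly. For the sharper estimate, I plug this first bound back in: in the term $c(R,d)\cdot\mathbb E\|X(\tau)-\bar z\|$ I use Jensen to write $\mathbb E\|X(\tau)-\bar z\|\le\sqrt{\mathbb E\|X(\tau)-\bar z\|^2}\le R_1\sqrt{\tau-s}$, and in the quadratic term I keep $\varepsilon^2$; integrating $\varepsilon^2 + (\text{const})R_1(\tau-s)^{1/2}$ over $[s,t]$ yields $\varepsilon^2(t-s) + \tfrac{2}{3}(\text{const})R_1(t-s)^{3/2} = \varepsilon^2(t-s) + \varsigma_1(t-s)(t-s)$ with $\varsigma_1(\epsilon)=\tfrac43(1+R)R_1\epsilon^{1/2}$ once the constant is identified as $2(1+R)$. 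The main obstacle is the careful bookkeeping in the pointwise generator estimate — correctly applying identity (\ref{equality:ell}) on the torus (where $\ell$ is only a measurable selection of a minimal-norm representative, and the triangle inequality for $\ell$-differences may be strict) and extracting exactly the two assumption terms (\ref{assumption:ineq:drift}) and (\ref{assumption:ineq:violance}) after integrating out the relaxed feedback controls $\gamma_{\bar x}(du)\vartheta_{\bar x}(dv)$ — and then matching the resulting constants to the prescribed $R_1$ and $\varsigma_1$; the Grönwall/iteration structure itself is routine.
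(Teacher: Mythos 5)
Your proposal follows essentially the same route as the paper: Dynkin's formula for $q_{\bar z}(x)=\|x-\bar z\|^2$, a pointwise generator bound split into a quadratic part controlled by (\ref{assumption:ineq:violance}) and a linear part controlled by (\ref{assumption:ineq:drift}) together with $\|f\|\le R$, then the crude linear bound followed by the bootstrap via Jensen and integration of $(\tau-s)^{1/2}$. The one point you left unresolved — how to apply (\ref{equality:ell}) so that the minimal-norm representative $\ell(\bar y,\bar x)$ (rather than $\ell(\bar y,\bar z)-\ell(\bar x,\bar z)$, which need not be minimal) appears in the linear term — is settled by substituting $x=\bar y$, $y=\bar z$, $z=\bar x$ into (\ref{equality:ell}), which gives exactly $\|\bar y-\bar z\|^2-\|\bar x-\bar z\|^2=\|\bar y-\bar x\|^2-2\langle\ell(\bar y,\bar x),\ell(\bar z,\bar x)\rangle$; your alternative expansion in terms of $\ell(\cdot,\bar z)$ would not match assumption (\ref{assumption:ineq:drift}) and should be discarded. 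With that substitution your argument reproduces the paper's proof, including the constants $R_1$ and $\varsigma_1$.
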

\begin{proof}
	Consider the function $q_{\bar{z}}(x)\triangleq \|x-\bar{z}\|^2$. Due to Definition \ref{def:Markov_motion}, we have that 
	\begin{equation}\label{equality:expect_q}
	\mathbb{E}q_{\bar{z}}(X(t))=\int_s^t\mathbb{E}\Lambda^{\mathcal{Q}}_\tau q_{\bar{z}}(X(\tau))d\tau
	\end{equation} 
	Further, using definition of the generator $\Lambda^{\mathcal{Q}}$ (see (\ref{intro:generator_Markov})) and equality (\ref{equality:ell}), we obtain the equality:
	\begin{equation}\label{ineq:Lambda_Q}
	\begin{split}
	\mathbb{E}\Lambda^{\mathcal{Q}}_\tau &q_{\bar{z}}(X(\tau))
	\\&=
	\sum_{\bar{x}\in \mathcal{S}}\nu_{*,\bar{x}}(\tau)\sum_{\bar{y}\in \mathcal{S},\bar{y}\neq\bar{x}}(\|\bar{y}-\bar{z}\|^2-\|\bar{x}-\bar{z}\|^2)\mathcal{Q}_{\bar{x},\bar{y}}(\tau,\mu(\tau),\gamma_{\bar{x}}(\tau),\vartheta_{\bar{x}}(\tau))\\&=
	\sum_{\bar{x}}\nu_{\bar{x}}(\tau)\sum_{\bar{y}\in \mathcal{S}}(\|\bar{y}-\bar{x}\|^2-2\langle \ell(\bar{y},\bar{x}),\ell(\bar{z},\bar{x})\rangle)\mathcal{Q}_{\bar{x},\bar{y}}(\tau,\mu(\tau),\gamma_{\bar{x}}(\tau),\vartheta_{\bar{x}}(\tau)).
	\end{split}
	\end{equation}
	
	Using  (\ref{assumption:ineq:violance}), we conclude  that
	\begin{equation}\label{ineq:violence_y_squared}
	\sum_{\bar{y}\in \mathcal{S}}\|\bar{y}-\bar{x}\|^2\mathcal{Q}_{\bar{x},\bar{y}}(\tau,\mu(\tau),\gamma_{\bar{x}}(\tau),\vartheta_{\bar{x}}) \leq  \varepsilon^2.
	\end{equation}
	To evaluate the last term, 
	recall that
	\begin{equation}\label{equality:Q}
	\mathcal{Q}_{\bar{x},\bar{y}}(\tau,\mu(\tau),\gamma_{\bar{x}}(\tau),\vartheta_{\bar{x}}(\tau))=\int_U\int_V Q_{\bar{x},\bar{y}}(\tau,\mu(\tau),u,v)\gamma_{\bar{x}}(\tau,du)\vartheta_{\bar{x}}(\tau,dv).
	\end{equation} Further, notice that
	\begin{equation*}\label{ineq:drift}
	\begin{split}
	-&\sum_{\bar{y}\in \mathcal{S}}\langle \ell(\bar{y},\bar{x}),\ell(\bar{z},\bar{x})\rangle Q_{\bar{x},\bar{y}}(\tau,\mu(\tau),u,v)\\ &=
	-\left\langle\sum_{\bar{y}\in \mathcal{S}} \ell(\bar{y},\bar{x})Q_{\bar{x},\bar{y}}(\tau,\mu(\tau),u,v)- f(\tau,\bar{x},u,v),\ell(\bar{z},\bar{x})\right\rangle -\left\langle f(\tau,\bar{x},u,v),\ell(\bar{z},\bar{x})\right\rangle.
	\end{split}
	\end{equation*}
	
	Thanks to (\ref{assumption:ineq:drift}), we obtain that
	\[\left|\left\langle\sum_{\bar{y}\in \mathcal{S}} \ell(\bar{y},\bar{x})Q_{\bar{x},\bar{y}}(\tau,\mu(\tau),u,v)- f(\tau,\bar{x},u,v),\ell(\bar{z},\bar{x})\right\rangle\right|\leq \varepsilon|\ell(\bar{z},\bar{x})|,\] while (see (\ref{intro:R}))
	\[\left|\langle f(\tau,\bar{x},u,v),\ell(\bar{z},\bar{x})\rangle\right|\leq R|\ell(\bar{z},\bar{x})|.\]
	This,  (\ref{ineq:Lambda_Q}), (\ref{ineq:violence_y_squared}), (\ref{equality:Q}) and the inequality  $|\ell(\bar{z},\bar{x})|\leq \sqrt{q_{\bar{z}}(\bar{x})}$ give the following estimate:
	\begin{equation}\label{ineq:E_q}
	\mathbb{E}\Lambda^{\mathcal{Q}}_\tau q_{\bar{z}}(X(\tau))\leq \varepsilon^2+2(\varepsilon+R)[\mathbb{E}q_{\bar{z}}(X(\tau))]^{1/2}.
	\end{equation}
	
	Taking into account the assumption that $\varepsilon\leq 1$, the inequality $0\leq q_{\bar{z}}(x)\leq d$ for every $x\in\td$ and plugging (\ref{ineq:E_q}) into (\ref{equality:expect_q}), we conclude that
	$$\mathbb{E}q_{\bar{z}}(X(t))\leq R_1^2(t-s). $$  Further, estimating the right-hand side of (\ref{ineq:E_q}) using this inequality, we deduce that
	\[\mathbb{E}\Lambda^{\mathcal{Q}}_\tau q_{\bar{z}}(X(\tau))\leq \varepsilon^2+2(\varepsilon+R)R_1\sqrt{\tau-s}.\]
	Plugging this into (\ref{equality:expect_q}), we obtain that
	\[\mathbb{E}q_{\bar{z}}(X(t))\leq \varepsilon^2 (t-s)+\frac{4}{3}(1+R)R_1(t-s)^{3/2}.\] 
	This gives the conclusion of the Lemma.
\end{proof}
\begin{corollary}\label{corollary:distance} Let $\mu(\cdot)$ satisfy (\ref{eq:backward_Kolmogorov}). Then
	$$W_2^2(\widetilde{\mu(t)},\widetilde{\mu(s)})\leq R_1^2(t-s). $$
\end{corollary}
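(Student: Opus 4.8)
The plan is to deduce Corollary~\ref{corollary:distance} from Lemma~\ref{lm:estimate} by constructing, for each fixed pair $s<t$, an explicit coupling between $\widetilde{\mu(s)}$ and $\widetilde{\mu(t)}$ and estimating its transport cost. Recall that, by the remark following Definition~\ref{def:Markov_motion}, when $\nu_*=\mu(s)$ the solution $\nu(\cdot)$ of the forward Kolmogorov equation~(\ref{eq:nu}) coincides with $\mu(\cdot)$, so $\mu(t)$ and $\mu(s)$ are the one-dimensional marginals at times $t$ and $s$ of a single Markov chain $X(\cdot)$ generated by $\mathcal{Q}(\tau,\mu(\tau),\gamma_{\mathcal{S}}(\tau),\vartheta_{\mathcal{S}}(\tau))$ with initial distribution $\mu(s)$, where $\gamma_{\mathcal{S}},\vartheta_{\mathcal{S}}$ are the controls realizing~(\ref{eq:backward_Kolmogorov}).

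First I would fix such a chain $X(\cdot)$ on $[s,r]$ and consider the joint law of $(X(s),X(t))$, which is a probability measure on $\mathcal{S}\times\mathcal{S}$ whose first marginal is $\widetilde{\mu(s)}$ and whose second marginal is $\widetilde{\mu(t)}$; hence it belongs to $\Pi(\widetilde{\mu(s)},\widetilde{\mu(t)})$. By the definition of $W_2$, this yields
\[
W_2^2(\widetilde{\mu(t)},\widetilde{\mu(s)})\leq \mathbb{E}\|X(t)-X(s)\|^2
= \sum_{\bar{z}\in\mathcal{S}}\mu_{\bar{z}}(s)\,\mathbb{E}\big[\|X(t)-\bar{z}\|^2\,\big|\,X(s)=\bar{z}\big].
\]
Next I would invoke the Markov property: conditioned on $X(s)=\bar{z}$, the process $X(\cdot)$ on $[s,r]$ is itself a Markov chain generated by the same Kolmogorov matrix but with initial distribution $\mathbb{1}_{\bar{z}}$ at time $s$. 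Therefore the first estimate of Lemma~\ref{lm:estimate} applies and gives $\mathbb{E}[\|X(t)-\bar{z}\|^2\mid X(s)=\bar{z}]\leq R_1^2(t-s)$ for every $\bar{z}\in\mathcal{S}$. Summing against the weights $\mu_{\bar{z}}(s)$, which add up to $1$, delivers $W_2^2(\widetilde{\mu(t)},\widetilde{\mu(s)})\leq R_1^2(t-s)$.

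The only genuinely delicate point is the conditioning step: one must ensure that the conditioned process still satisfies the martingale characterization in Definition~\ref{def:Markov_motion} with the deterministic driver $\mu(\cdot)$ unchanged. This is standard for time-inhomogeneous Markov chains on the Skorokhod space $D([s,r],\mathcal{S})$ described in Remark~\ref{remark:existence_skorokhod} — the driver $\mu(\tau)$ is prescribed externally and does not depend on the realization of $X$, so regular conditional distributions given $\mathcal{F}_s$ inherit the martingale property — but it deserves an explicit sentence. Everything else is a direct application of Lemma~\ref{lm:estimate} together with the coupling definition of the Wasserstein distance, so I would keep the proof to a few lines.
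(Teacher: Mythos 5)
Your argument is correct and is essentially the paper's own proof: both reduce the Wasserstein bound to Lemma~\ref{lm:estimate} applied to chains started from Dirac initial conditions and then average against $\mu_{\bar{z}}(s)$, the only cosmetic difference being that the paper explicitly constructs a family of processes $X^{\bar{z}}$ with $X^{\bar{z}}(s)=\bar{z}$ on a common probability space (citing \cite{Kol_markov}), whereas you obtain the same objects by conditioning a single chain on $X(s)=\bar{z}$. Your remark that the conditioning is harmless because the driver $\mu(\cdot)$ is prescribed externally (so the generator is unaffected) is exactly the right point to flag.
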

\begin{proof} First notice  \cite{Kol_markov} that one can find a filtered probability space  $(\Omega,\mathcal{F},\{\mathcal{F}_t\}_{t\in [s,r]},P)$ and stochastic processes  $X^{\bar{z}}$, $\bar{z}\in\mathcal{S}$, $X$ those are  defined on $(\Omega,\mathcal{F},\{\mathcal{F}_t\}_{t\in [s,r]},P)$ such that
	\begin{itemize}
\item $(\Omega,\mathcal{F},\{\mathcal{F}_t\}_{t\in [s,r]},P,X^{\bar{z}})$, $\bar{z}\in\mathcal{S}$, and $(\Omega,\mathcal{F},\{\mathcal{F}_t\}_{t\in [s,r]},P,X)$ are generated by  the Kolmogorov matrix $\mathcal{Q}(t,\mu(t),\gamma_{\mathcal{S}}(t),\vartheta_{\mathcal{S}}(t))$;
\item  $X^{\bar{z}}(s)=\bar{z}$, $P$-a.s., 
\item $P(X(s)=\bar{x})=\mu_{\bar{x}}(s)$.\end{itemize} This, in particular, means that $\operatorname{Law}(X(t))=\mu(t)$. Furthermore, for every $\phi\in C(\mathcal{S})$, $$\mathbb{E}\phi(X(t))=\sum_{\bar{z}\in\mathcal{S}}\mu_{\bar{z}}(s)\phi(X^{\bar{z}})(t). $$
	Thus, we have that
	\[ W_2^2(\widetilde{\mu(t)},\widetilde{\mu(s)})\leq \sum_{\bar{z}\in\mathcal{S}}\mathbb{E}\|X^{\bar{z}}(t)-\bar{z}\|^2\mu_{\bar{z}}(s).
	\] From Lemma~\ref{lm:estimate} we obtain that
	\[ W_2^2(\widetilde{\mu(t)},\widetilde{\mu(s)})\leq R_1^2(t-s).\]
\end{proof}

\section{Construction of suboptimal strategy}\label{sect:construction_suboptimal}
In this section, we define the strategy $\mathbb{u}^\Delta$ desired in Theorem~\ref{th:existence_strategy}. The construction  relies on the adaptation of Krasovskii-Subbotin extremal shift rule.

We will consider only the first statement of Theorem~\ref{th:existence_strategy}. The proof of the second statement can be obtained by interchanging of the players. 

Let $\varphi^+$ be a supersolution of (\ref{eq:HJ}). There are several equivalent definitions of the supersolution. We will use one involving the control theory. Recall \cite[\S 12]{Subb_book} that a lower semicontinuous function $\varphi^+$ is a supersolution of (\ref{eq:HJ}) provided that
\begin{enumerate}[label=(S\arabic*)]
	\item\label{def:part:supersolution_boundary} $\varphi^+(T,\mu)\geq \hat{g}(\mu)=g(\widetilde{\mu})$;
	\item\label{def:part:supersolution_viability}
for every $s,r\in [0,T]$, $s<r$, $\mu_*\in  \Sigma$, $\vartheta_{\mathcal{S}}\in \mathcal{V}_{\text{instant}}^{\mathcal{S}}$, there exists a control $[s,r]\ni t\mapsto \gamma_{\mathcal{S}}(t)\in\mathcal{U}_{\text{instant}}^{\mathcal{S}}$ such that, if $\mu(\cdot)$ solves
\begin{equation}\label{eq:mu_control_s_r}
\frac{d}{dt}\mu(t)=\mu(t)\mathcal{Q}(t,\mu(t),\gamma_{\mathcal{S}}(t),\vartheta_{\mathcal{S}}),\ \ \mu(s)=\mu_*,
\end{equation}
then the following inequality holds: \begin{equation}\label{ineq:varphi_plus_s_r}
\varphi^+(r,\mu(r))\leq \varphi^+(s,\mu(s)). 
\end{equation}
\end{enumerate}
\begin{remark}
 Notice that the original definition \cite[\S 12]{Subb_book} requires the convexification of the right-hand side of (\ref{eq:mu_control_s_r}) with respect to the control of the first player. However, we avoid it due to the fact that  the function \[\gamma_{\mathcal{S}}\mapsto \mu\mathcal{Q}(t,\mu,\gamma_{\mathcal{S}},\vartheta_{\mathcal{S}})=\left(\int_U\int_V\sum_{\bar{x}\in \mathcal{S}}\mu_{\bar{x}}{Q}_{\bar{x},\bar{y}}(t,\mu,u,v)\gamma_{\bar{x}}(du)\vartheta_{\bar{x}}(dv)\right)_{\bar{y}\in\mathcal{S}}\] is convex w.r.t. $\gamma_{\mathcal{S}}$.
\end{remark}

Furthermore, let $\hat{u}$ and $\hat{v}$ be measurable functions defined on $[0,T]\times\td\times\td\times \ptd$ and taking values  in $U$ and $V$ respectively satisfying the following condition: for  $t\in [0,T]$, $m\in\ptd$, $x\in\td$, $z\in \rd$, 
\begin{equation}\label{intro:u_hat}
\hat{u}(t,x,y,m)\in \underset{u\in U}{\operatorname{Argmin}}\max_{v\in V}\langle \ell(x,y),f(t,x,m,u,v)\rangle,
\end{equation}
\begin{equation}\label{intro:v_hat}
\hat{v}(t,x,y,m)\in \underset{v\in V}{\operatorname{Argmax}}\min_{u\in U}\langle \ell(x,y),f(t,x,m,u,v)\rangle.
\end{equation} Existence of these functions directly follows from the measurable maximum theorem \cite[18.19]{Infinite_dimensional_analysis}.

Let $t_0$ be an initial time, $\Delta=\{t_k\}_{k=0}^N$ be a partition of $[t_0,T]$. The strategy $\mathbb{u}^\Delta=\{\mathfrak{u}^\Delta_k\}_{k=0}^{N-1}$ is defined  based on the auxiliary motions $\eta(\cdot)$, $\mu(\cdot)$  those serve as guides for the original system governed by (\ref{sys:agent_mfdg}). We define the controls $\mathbb{u}^\Delta_k$, $k=0,\ldots,N-1$ and the motions $\eta(\cdot)$, $\mu(\cdot)$ on intervals $[t_{k-1},t_{k}]$ inductively.
\begin{enumerate}[label=($\mathbb{u}$\arabic{*})]
	\item\label{strategy_intro:initial} Put $\eta(t_0)=\mu(t_0)\triangleq \operatorname{pr}_{\mathcal{S}}(m_0)$.
	\item\label{strategy_intro:step} If $\eta(t)$, $\mu(t)$, $t\in [t_0,t_k]$ are already defined, $m_0,\ldots,m_k\in\ptd$, then 
	let $\pi_k$ be an optimal plan between $m_k$ and $\widetilde{\eta(t_k)}$. Denote
	\begin{equation}\label{intro:alpha_k_def}
	\alpha_k\triangleq (\operatorname{p}^1,\hat{u}(t_k,\cdot,\cdot,m_k))\sharp\pi_k,
	\end{equation}
	\begin{equation}\label{intro:beta_k_def}
	\beta_k\triangleq (\operatorname{p}^2,\hat{v}(t_k,\cdot,\cdot,m_k))\sharp\pi_k.
	\end{equation}
	Put
	\[\mathbb{u}^\Delta_k[m_0,\ldots,m_k]\triangleq \alpha_k.\] Further, notice that $\beta_k$ is supported on $\mathcal{S}\times V$ and $\operatorname{p}^1\sharp\beta_k=\widetilde{\eta(t_k)}$. Let $\beta_k(dv|\bar{x})$ be disintegration of $\beta_k$ w.r.t. $\widetilde{\eta(t_k)}$. We regard each probability $\beta_k(dv|\bar{x})$ as a constant control of the second player in the finite state mean field type game applied at the state $\bar{x}$. Set
	\begin{equation}\label{intro:zeta_k}
	\vartheta_{k,\mathcal{S}}\triangleq (\vartheta_{k,\bar{x}})_{\bar{x}\in\mathcal{S}},\, \text{with } \vartheta_{k,\bar{x}}(dv)\triangleq \beta_k(dv|\bar{x}). 
	\end{equation} Now let $\mu(t)$ $t\in [t_k,t_{k+1}]$ be such that  (\ref{eq:mu_control_s_r}) and (\ref{ineq:varphi_plus_s_r}) holds for $s=t_k$, $r=t_{k+1}$, $\mu_*=\mu(t_k)$ and $\vartheta_{\mathcal{S}}(t)\equiv\vartheta_{k,\mathcal{S}}$, $t\in [t_k,t_{k+1}]$, and some control of the first player $[t_{k},t_{k+1}]\ni t\mapsto \gamma_{k,\mathcal{S}}(t)\in\mathcal{U}_{\text{instant}}^{\mathcal{S}}$.
	
	For $x_*\in\td$, $\bar{y}_*\in \mathcal{S}$, denote by
	$\vartheta_{k,\mathcal{S}}^{x_*,\bar{y}_*}=(\vartheta_{k,\bar{x}}^{x_*,\bar{y}_*})_{\bar{x}\in\mathcal{S}}$ the constant control of the second player with
	\begin{equation}\label{intro:zeta_k_x_bar_y_bar_star}
	\vartheta_{k,\bar{x}}^{x_*,\bar{y}_*}=\left\{\begin{array}{lc}
	\vartheta_{k,\bar{x}}, & \bar{x}\neq \bar{y}_*\\
	\delta_{\hat{v}(t_k,x_*,\bar{y}_*,m_k)}, & \bar{x}=\bar{y}_*.
	\end{array}\right.
	\end{equation}
	
	Let $\eta^{\Delta,x_*,\bar{y}_*}_k$ solve the differential equation
	\begin{equation}\label{eq:eta_x_star_bar_y_star}
	\frac{d}{dt}\eta^{x_*,\bar{y}_*}_k(t)= \eta^{x_*,\bar{y}_*}_k(t)\mathcal{Q}(t,\mu(t),\gamma_{k,\mathcal{S}}(t),\vartheta_{k,\mathcal{S}}^{x_*,\bar{y}_*}),\ \ \eta^{x_*,\bar{y}_*}_k(t_k)=\mathbb{1}_{\bar{y}_*},
	\end{equation} where $\mathbb{1}_{\bar{y}_*}$ stands for the distribution on $\{1,\ldots,|\mathcal{S}|\}$ concentrated at $\bar{y}_*$ (see (\ref{intro:1})) Finally, put, for $t\in [t_{k},t_{k+1}]$, \begin{equation}\label{intro:eta_bar_y_star}
	\eta^{\bar{y}_*}_k(t)\triangleq \int_{\td}\eta^{x_*,\bar{y}_*}_k(t)\pi(dx_*|\bar{y}_*),
	\end{equation}
	\begin{equation}\label{intro:eta_final}
	\eta(t)\triangleq \sum_{\bar{y}_*\in \mathcal{S}}\eta_{\bar{y}_*}(t_k)\eta^{\bar{y}_*}(t).
	\end{equation}

\end{enumerate} 

In the next section, we show that the strategy $\mathbb{u}^\Delta$ defined in this manner provides the estimates desired in Theorem~\ref{th:existence_strategy}. This is performed using a modification of the extremal shift rule.

\section{Extremal shift rule}\label{sect:extremal_shift}
Let $\mathbb{u}^\Delta$ be introduced by rules~\ref{strategy_intro:initial},~\ref{strategy_intro:step} and let $m(\cdot)\in \mathcal{X}_1(t_0,m_0,\Delta,\mathbb{u}^\Delta)$. This means that on each interval there exists a distribution of players' controls $\varkappa_k\in \mathcal{D}_1[\alpha_k]$ (here $\alpha_k$ is given by (\ref{intro:alpha_k_def})) for $m_k=m(t_k)$ such that
\[m(t)=m(t,t_k,m(t_{k}),\varkappa_k),\ \ t\in [t_k,t_{k+1}].\] Recall that the design of strategy $\mathbb{u}^\Delta$ involves auxiliary motions $\mu(\cdot)$ and $\eta(\cdot)$ of the Markov chain those satisfies the initial condition $\mu(t_0)=\eta(t_0)=\operatorname{pr}_{\mathcal{S}}(m_0)$ and are defined on the intervals $[t_k,t_{k+1}]$ by (\ref{intro:zeta_k})--(\ref{intro:eta_final}) (see~\ref{strategy_intro:initial},~\ref{strategy_intro:step}). 

To simplify designation, put
\begin{equation}\label{intro:u_hat_k}
u^\natural_k(x,y)\triangleq \hat{u}(t_k,x,y,m(t_k)), 
\end{equation}
\begin{equation}\label{intro:v_hat_k}
v^\natural_k(x,y)\triangleq \hat{v}(t_k,x,y,m(t_k)). 
\end{equation} Furthermore, for $\zeta\in\mathcal{V}$, set
\begin{equation*}
X_k^{x_*,\bar{y}_*}(t,\zeta)\triangleq x(t,t_k,x_*,m(t),u^\natural_k(x_*,\bar{y}^*),\zeta).
\end{equation*} 
Recall that the probability $\eta(t)$ for $t\in [t_k,t_{k+1}]$ is the averaging of the probabilities $\eta_k^{x_*,\bar{y}_*}(t)$ those are defined by (\ref{eq:eta_x_star_bar_y_star}). The squared 2-Wasserstein distance between $\delta_{X^{x_*,\bar{y}_*}(t_{k+1},\zeta)}$ and $\widetilde{\eta_k^{x_*,\bar{y}_*}(t_{k+1})}$ is equal to
\[W_2^2(\delta_{X^{x_*,\bar{y}_*}_k(t_{k+1},\zeta)},\widetilde{\eta_k^{x_*,\bar{y}_*}(t_{k+1})}) = \sum_{\bar{x}\in\mathcal{S}}\|X^{x_*,\bar{y}_*}_k(t_k,\zeta)-\bar{x}\|^2\eta_{k,\bar{x}}^{x_*,\bar{y}_*}(t_{k+1}),\] when the squared distance between $m(t_{k+1})$ and $\widetilde{\eta(t_{k+1})}$ is estimated as follows:
\begin{equation}\label{ineq:distance_X_x_star_y_bar_eta_t_k_plus}
\begin{split}
W_2&(m(t_{k+1}),\widetilde{\eta(t_{k+1})})\\ &\leq \int_{\td\times\mathcal{S}}\int_{\mathcal{V}} W_2^2(\delta_{X^{x_*,\bar{y}_*}_k(t_{k+1},\zeta)},\widetilde{\eta_k^{x_*,\bar{y}_*}(t_{k+1})})\varkappa_k(d\zeta|x_*,u^\natural_k(x_*,y_*))\pi_k(d(x_*,\bar{y}_*))\\ &=\int_{\td\times\mathcal{S}}\int_{\mathcal{V}} \|X^{x_*,\bar{y}_*}_k(t_{k+1},\zeta)-\bar{x}\|^2\eta_{k,\bar{x}}^{x_*,\bar{y}_*}(t_{k+1})
\varkappa_k(d\zeta|x_*,u^\natural_k(x_*,y_*))\pi_k(d(x_*,\bar{y}_*)),
\end{split}
\end{equation} where $\pi_k$ is the optimal plan between $m(t_k)$ and $\widetilde{\eta(t_k)}$

Below, we estimate $W_2^2(\delta_{X^{x_*,\bar{y}_*}_k(t_{k+1},\zeta)},\widetilde{\eta_k^{x_*,\bar{y}_*}(t_{k+1})})$ by some function of $\|x_*-\bar{y}_*\|^2$, $W_2^2(m(t_k),\widetilde{\eta(t_k)})$ and $W_2^2(m(t_k),\widetilde{\mu(t_k)})$ (Lemma~\ref{lm:extremal_shift_two_point}). Then we evaluate the  distance between $\mu(t_{k+1})$ and $\eta(t_{k+1})$. Based on this we derive the estimate of squared Wasserstein distance between $m(t_{k+1})$ and $\widetilde{\eta(t_{k+1})})$ by $W_2^2(m(t_k),\widetilde{\eta(t_k)})$. This will give the proof of the main theorem.
 
To state the Lemma~\ref{lm:extremal_shift_two_point}, denote 
\[\begin{split}
\varsigma_f(\theta)\triangleq \sup\{\|f(&t',x,m,u,v)-f(t'',x,m,u,v)\|:\\&t',t''\in [0,T],x\in\td,m\in\ptd,u\in U,v\in V,\ \ |t'-t''|\leq \theta\}.\end{split}\] Since $f$ is continuous on the compact $[0,T]\times\td\times\ptd\times U\times V$, the function $\varsigma_f$ is such that $\varsigma_f(\theta)\rightarrow 0$ as $\theta\rightarrow 0$.

\begin{lemma}\label{lm:extremal_shift_two_point} For any $\zeta\in\mathcal{V}$, the following estimate holds:
\begin{equation}\label{ineq:wasserstein_delta_nu}
\begin{split}
W_2^2(\delta_{X_k^{x_*,\bar{y}_*}(t_{k+1})},\widetilde{\eta^{x_*,\bar{y}_*}_k(t_{k+1})})\leq \|x_*-\bar{y}_*\|^2(&1+(3L+1)(t_{k+1}-t_k))\\+L W_2^2(m(t_{k}),\widetilde{\mu(t_k)})(t_{k+1}-t_k)&+2\varepsilon^2 (t_{k+1}-t_k)\\&+\varsigma_2(t_{k+1}-t_k)\cdot(t_{k+1}-t_k),
\end{split}
\end{equation}
where $\varsigma_2(\cdot)$ is  determined only by $f$, $Q$, $\mathcal{S}$ and satisfies  $\varsigma_2(\theta)\rightarrow 0$ as $\theta\rightarrow 0$.
\end{lemma}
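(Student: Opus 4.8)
The plan is to reduce (\ref{ineq:wasserstein_delta_nu}) to a Gr\"onwall estimate for the scalar function
\[\phi(\tau)\triangleq W_2^2\bigl(\delta_{X_k^{x_*,\bar y_*}(\tau,\zeta)},\widetilde{\eta_k^{x_*,\bar y_*}(\tau)}\bigr)=\sum_{\bar x\in\mathcal S}\eta_{k,\bar x}^{x_*,\bar y_*}(\tau)\,\bigl\|X_k^{x_*,\bar y_*}(\tau,\zeta)-\bar x\bigr\|^2,\qquad\tau\in[t_k,t_{k+1}],\]
writing $X_k(\tau)\triangleq X_k^{x_*,\bar y_*}(\tau,\zeta)$ for brevity. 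Note $\phi(t_k)=\|x_*-\bar y_*\|^2$ because $\eta_k^{x_*,\bar y_*}(t_k)=\mathbb{1}_{\bar y_*}$ and $X_k(t_k)=x_*$. Since $\eta_k^{x_*,\bar y_*}(\cdot)$ solves the linear system (\ref{eq:eta_x_star_bar_y_star}) and $\tau\mapsto X_k(\tau)$ is absolutely continuous with velocity $\int_Vf(\tau,X_k(\tau),m(\tau),u_k^\natural(x_*,\bar y_*),v)\,\zeta(dv|\tau)$, I differentiate $\phi$ term by term; using $\sum_{\bar y}\mathcal Q_{\bar x,\bar y}=0$ and the torus identity (\ref{equality:ell}), the part coming from $\dot\eta_k^{x_*,\bar y_*}$ becomes $\sum_{\bar x}\eta_{k,\bar x}^{x_*,\bar y_*}(\tau)\sum_{\bar y}\bigl(\|\bar y-\bar x\|^2-2\langle\ell(X_k(\tau),\bar x),\ell(\bar y,\bar x)\rangle\bigr)\mathcal Q_{\bar x,\bar y}(\tau,\mu(\tau),\gamma_{k,\bar x}(\tau),\vartheta_{k,\bar x}^{x_*,\bar y_*})$, so no probabilistic machinery beyond the ODE (\ref{eq:eta_x_star_bar_y_star}) is needed.

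I then bound $\phi'(\tau)$. By (\ref{assumption:ineq:violance}) the term carrying $\|\bar y-\bar x\|^2$ contributes at most $\varepsilon^2$; by (\ref{assumption:ineq:drift}), $\sum_{\bar y}\ell(\bar y,\bar x)Q_{\bar x,\bar y}(\tau,\mu(\tau),u,v)$ lies within $\varepsilon$ of $f(\tau,\bar x,\widetilde{\mu(\tau)},u,v)$, so, after collecting the velocity of $X_k$, the leading quantity is the ``drift mismatch'' $2\langle\ell(X_k(\tau),\bar x),\,\int_Vf(\tau,X_k(\tau),m(\tau),u_k^\natural,v)\zeta(dv|\tau)-\int_U\int_Vf(\tau,\bar x,\widetilde{\mu(\tau)},u,v)\gamma_{k,\bar x}(\tau,du)\vartheta_{k,\bar x}^{x_*,\bar y_*}(dv)\rangle$ averaged against $\eta_k^{x_*,\bar y_*}(\tau)$, up to remainders of order $\varepsilon^2+2\varepsilon|\ell(X_k(\tau),\bar x)|$. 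I split the $\bar x$-sum into $\bar x=\bar y_*$ and $\bar x\neq\bar y_*$: the exit rate from $\bar y_*$ is bounded by a constant depending only on $Q$, so $\sum_{\bar x\neq\bar y_*}\eta_{k,\bar x}^{x_*,\bar y_*}(\tau)=O(\tau-t_k)$, and since $\|f\|\le R$ and $\|\bar y-\bar x\|\le\sqrt d$, this part contributes only an $O(\tau-t_k)$ term to $\phi'$, hence an $O\bigl((t_{k+1}-t_k)^2\bigr)$ amount that I absorb into $\varsigma_2$.

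On the dominant event $\bar x=\bar y_*$ the chain uses the extremal second-player control $\vartheta_{k,\bar y_*}^{x_*,\bar y_*}=\delta_{\hat v(t_k,x_*,\bar y_*,m(t_k))}$ (cf.\ (\ref{intro:zeta_k_x_bar_y_bar_star})), while the agent is driven by $u_k^\natural(x_*,\bar y_*)=\hat u(t_k,x_*,\bar y_*,m(t_k))$. Here I freeze the arguments of $f$ from $(\tau,X_k(\tau),m(\tau),\widetilde{\mu(\tau)})$ to $(t_k,x_*,m(t_k))$: continuity of $f$ in time and $\|X_k(\tau)-x_*\|\le R(\tau-t_k)$ yield $\varsigma_f(t_{k+1}-t_k)$- and $O(t_{k+1}-t_k)$-terms, Lemma~\ref{lm:deterministic_motion_estimate} and Corollary~\ref{corollary:distance} bound $W_2(m(\tau),m(t_k))$ and $W_2(\widetilde{\mu(\tau)},\widetilde{\mu(t_k)})$ by $O\bigl(\sqrt{t_{k+1}-t_k}\bigr)$, and the only non-negligible cost is the Lipschitz bound (\ref{intro:L}) in $x$ and $m$, giving $2L\bigl(\|x_*-\bar y_*\|+W_2(m(t_k),\widetilde{\mu(t_k)})\bigr)\|X_k(\tau)-\bar y_*\|$. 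With both drifts now evaluated at $(t_k,x_*,m(t_k))$, the definitions (\ref{intro:u_hat})--(\ref{intro:v_hat}) and Isaacs' condition close the estimate: putting $\rho\triangleq\min_u\max_v\langle\ell(x_*,\bar y_*),f(t_k,x_*,m(t_k),u,v)\rangle=\max_v\min_u\langle\ell(x_*,\bar y_*),f(t_k,x_*,m(t_k),u,v)\rangle$, one has $\int_V\langle\ell(x_*,\bar y_*),f(t_k,x_*,m(t_k),u_k^\natural,v)\rangle\zeta(dv|\tau)\le\rho$ and $\int_U\langle\ell(x_*,\bar y_*),f(t_k,x_*,m(t_k),u,\hat v)\rangle\gamma_{k,\bar y_*}(\tau,du)\ge\rho$, so the frozen drift mismatch is $\le0$.

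Assembling the estimates and using $\sum_{\bar x}\eta_{k,\bar x}^{x_*,\bar y_*}(\tau)|\ell(X_k(\tau),\bar x)|\le[\phi(\tau)]^{1/2}$ (Cauchy--Schwarz) together with Young's inequality on the cross-terms $2L\|x_*-\bar y_*\|[\phi(\tau)]^{1/2}$, $2LW_2(m(t_k),\widetilde{\mu(t_k)})[\phi(\tau)]^{1/2}$ and $2\varepsilon[\phi(\tau)]^{1/2}$, I obtain $\phi'(\tau)\le c\,\phi(\tau)+(2L+1)\|x_*-\bar y_*\|^2+L\,W_2^2(m(t_k),\widetilde{\mu(t_k)})+2\varepsilon^2+\varsigma(\tau-t_k)$ for a constant $c$ depending only on $L$ and a function $\varsigma$ with $\varsigma(\theta)\to0$ as $\theta\to0$, built from $\varsigma_f$, $R$, $R_1$ and $\varepsilon$. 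Gr\"onwall's lemma on $[t_k,t_{k+1}]$, together with $\phi(t_k)=\|x_*-\bar y_*\|^2$ and $t_{k+1}-t_k\le T$, then produces (\ref{ineq:wasserstein_delta_nu}): the extra $L(t_{k+1}-t_k)\|x_*-\bar y_*\|^2$ arises from the exponential amplification of $\phi(t_k)$, and the remaining $o(1)\cdot(t_{k+1}-t_k)$ terms collapse into $\varsigma_2$. I expect the main obstacle to be the bookkeeping in the last two displayed arguments: discarding the small-probability paths that have left $\bar y_*$, freezing all four arguments of $f$ without degrading the sharp $L$-dependence, and organizing the Young inequalities so that the surviving coefficients are exactly $(3L+1)$ and $L$ while every non-sharp term is diverted into $\varsigma_2$.
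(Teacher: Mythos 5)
Your overall architecture is sound and is essentially a differential (Gr\"onwall) reformulation of the paper's argument: the paper realizes $\widetilde{\eta_k^{x_*,\bar y_*}(\cdot)}$ as the law of a Markov chain $Y$ started at $\bar y_*$ and bounds $\mathbb{E}\|X_k^{x_*,\bar y_*}(t_{k+1})-Y(t_{k+1})\|^2$, which, since $X_k^{x_*,\bar y_*}$ is deterministic, is exactly your $\phi(t_{k+1})$; the generator computation you perform via the ODE for $\eta$ is the same as the paper's martingale identity, and the splitting $\bar x=\bar y_*$ versus $\bar x\neq\bar y_*$, the use of (\ref{assumption:ineq:drift})--(\ref{assumption:ineq:violance}), the freezing of $f$, the Lipschitz step producing the $L$-terms, and the Isaacs/saddle-point conclusion all match. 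However, there is one genuine gap, precisely at the step you pass over silently: the extremal-shift comparison. In your derivative of $\phi$ the deterministic drift and the Markov drift are both paired with the \emph{current} direction $\ell(X_k(\tau),\bar x)$, and on the dominant event $\bar x=\bar y_*$ you then invoke the saddle value $\rho=\min_u\max_v\langle\ell(x_*,\bar y_*),f(t_k,x_*,m(t_k),u,v)\rangle$, i.e.\ you replace $\ell(X_k(\tau),\bar y_*)$ by $\ell(x_*,\bar y_*)$. On $\rd$ this replacement would cost only $O(R(\tau-t_k))$ and go into $\varsigma_2$, but on $\td$ the map $x\mapsto\ell(x,\bar y_*)$ is discontinuous on the cut locus (where some coordinate of the difference equals $1/2$), so if $X_k(\cdot)$ crosses it during $[t_k,t_{k+1}]$ the direction jumps by a lattice vector and the frozen-drift mismatch is no longer $\leq 0$ up to $o(1)$ terms. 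One can check that such a crossing forces $\|x_*-\bar y_*\|$ to be bounded below (close to $1/2$ in some coordinate), so the damage can be dominated by $C\|x_*-\bar y_*\|^2(t_{k+1}-t_k)$ with $C$ depending on $R$ and $d$ --- but that changes the coefficient $3L+1$ in (\ref{ineq:wasserstein_delta_nu}) and would propagate into $C^*$ in Theorem~\ref{th:existence_strategy}.

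The paper sidesteps this entirely by expanding the squared distance only at the two endpoints, in $\rd$, with representatives $x_*'\in x_*$, $Y'_{t_k}\in Y(t_k)$, $Y'_{t_{k+1}}\in Y(t_{k+1})$ fixed so that $x'(t_k)-Y'_{t_k}=\ell(x_*,\bar y_*)$ and $Y'_{t_{k+1}}-Y'_{t_k}=\ell(Y(t_{k+1}),Y(t_k))$; the inequality $\|X(t_{k+1})-Y(t_{k+1})\|^2\leq\|x'(t_{k+1})-Y'_{t_{k+1}}\|^2$ goes the right way because the torus distance is a minimum over lifts, and every cross term then carries the \emph{fixed} reference direction $\ell(x_*,\bar y_*)$, so the extremal controls $\hat u,\hat v$ chosen for that direction apply verbatim. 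To repair your proof you should either adopt this endpoint expansion (replacing $\phi(\tau)$ by the lifted surrogate $\sum_{\bar x}\eta_{k,\bar x}^{x_*,\bar y_*}(\tau)\|x'(\tau)-\bar x'_\tau\|^2$ with representatives anchored at $t_k$), or accept and track the larger constant. A second, minor point: differentiating $\tau\mapsto\|X_k(\tau)-\bar x\|^2$ on $\td$ requires noting that it is a minimum of smooth lifts, hence locally Lipschitz and a.e.\ differentiable with the derivative you wrote; this is harmless but should be said.
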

\begin{proof}
	Let $(\Omega,\mathcal{F},\{\mathcal{F}_t\}_{t\in [t_{k},t_{k+1}]},Y,P)$ be a Markov chain generated by the Kolmogorov matrix $\mathcal{Q}(t,\mu(t),\gamma_{k,\mathcal{S}}(t),\vartheta_{k,\mathcal{S}}^{x_*,\bar{y}_*})$ and the initial distribution  at the time $s$ equal to $\mathbb{1}_{\bar{y}_*}$. Notice that
	$P(Y(t)=\bar{x})=\eta_{k,\bar{x}}^{x_*,\bar{y}_*}(t)$.
	
	We shall prove that	
	\begin{equation}\label{ineq:expectation_X_Y}
	\begin{split}
	\mathbb{E}\|X_k^{x_*,\bar{y}_*}(t_{k+1})-Y(t_{k+1})\|^2\leq \|x_*-\bar{y}_*\|^2(1&+(3L+1)(t_{k+1}-t_k))\\+L W_2^2(m(t_k),\widetilde{\mu(t_k)})&(t_{k+1}-t_k)+2\varepsilon^2 (t_{k+1}-t_k)\\&+\varsigma_2(t_{k+1}-t_k)\cdot(t_{k+1}-t_k)
	\end{split}
	\end{equation} for some function $\varsigma_2$ such that $\varsigma_2(\theta)\rightarrow 0$ as $\theta\rightarrow 0$. This will imply (\ref{ineq:wasserstein_delta_nu}).
	
	
	For shortness, set
	\begin{equation}\label{intro:u_star_def}
	u_*\triangleq u^\natural_k(x_*,\bar{y}_*)=\hat{u}(t_k,x_*,\bar{y}_*,m(t_k)),
	\end{equation}
	\begin{equation}\label{intro:v_star_def}
	v_*\triangleq v^\natural_k(x_*,\bar{y}_*)=\hat{v}(t_k,x_*,\bar{y}_*,m(t_k)).
	\end{equation}
	Recall that $X^{x_*,\bar{y}_*}_k(\cdot,\zeta)$ satisfies
	\[\frac{d}{dt}X^{x_*,\bar{y}_*}_k(t)=\int_V f(t,X^{x_*,\bar{y}_*}_k(t),m(t),u_*,v)\zeta(dv|t),\ \ x(t_k)=x_*.\] If $x'\in\rd$, then, with some abuse of notation, we write $f(t,x',m,u,v)$ instead $f(t,\{x'\}+\mathbb{Z}^d,m,u,v)$.
	Now, let $x'_*\in x_*$. Consider the solution of the following differential equation in $\rd$:
	\begin{equation}\label{eq:f_rd}
	\frac{d}{dt}x'(t)=\int_Vf(t,x'(t),m(t),u_*,v)\zeta(dv|t),\ \ x'(t_k)=x_*'.
	\end{equation} Notice that $x'(t)\in X^{x_*,\bar{y}_*}_k(t)$.
	
	For any $Y'_{t_k}\in Y(t_k)$, $Y'_{t_{k+1}}\in Y(t_{k+1})$, we have that
	\[\begin{split}
	\|x(t_{k+1})-Y(t_{k+1})&\|^2\leq\|x'(t_{k+1})-Y'_{t_{k+1}}\|^2\\=\|(x&{}'(t_{k+1})-x'(t_k))+(x'(t_k)-Y'_{t_k})-(Y'_{t_{k+1}}-Y'_{t_k})\|^2\\ =\|x'&(t_k)-Y'_{t_k}\|^2+\|Y'_{t_{k+1}}-Y'_{t_k}\|^2+\|x'(t_{k+1})-x'(t_k)\|^2
	\\&-2\langle x'(t_{k+1})-x'(t_k),Y'_{t_{k+1}}-Y'_{t_k}\rangle\\&+2\langle x'(t_{k+1})-x'(t_k),x'(t_k)-Y'_{t_k}\rangle-2\langle Y'_{t_{k+1}}-Y'_{t_k},x'(t_k)-Y'_{t_k}\rangle\\ \leq
	\|x'&(t_k)-Y'_{t_k}\|^2+2\|Y'_{t_{k+1}}-Y'_{t_k}\|^2+2\|x'(t_{k+1})-x'(t_k)\|^2
	\\&+2\langle x'(t_{k+1})-x'(t_k),x'(t_k)-Y'_{t_k}\rangle-2\langle Y'_{t_{k+1}}-Y'_{t_k},x'(t_k)-Y'_{t_k}\rangle.
		\end{split}
	\]
	
	Now choose $Y'_{t_k}\in Y(t_k)$ $Y'_{t_{k+1}}\in Y(t_{k+1})$ such that 
	\[x'(t_k)-Y'_{t_k}=\ell(x_*,Y(t_k)),\, Y'_{t_{k+1}}-Y'_{t_k}=\ell(Y(t_{k+1}),Y(t_k)).\] Therefore,
	\begin{equation}\label{lemma_extremal:ineq:x_r_Y_r}
	\begin{split}
	\|x(t_{k+1})&-Y(t_{k+1})\|^2\\\leq \|x&(t_k)-Y(t_k)\|^2+2\|Y(t_{k+1})-Y(t_k)\|^2+2\|x'(t_{k+1})-x'(t_k)\|^2
	\\&+2\langle x'(t_{k+1})-x'(t_k),\ell(x(t_k),Y(t_k))\rangle\\&-2\langle \ell(Y(t_{k+1}),Y(t_k)),\ell(x(t_k),Y(t_k))\rangle.
	\end{split}
	\end{equation}
	
	We have that 
	\begin{equation}\label{lemma_extremal:ineq:x_prime_squared}
	\|x'(t_{k+1})-x'(t_k)\|^2\leq R(t-s)^2.
	\end{equation} Lemma~\ref{lm:estimate} implies that
	\begin{equation}\label{lemma_extremal:ineq:expectation_Y_r_s}
	\mathbb{E}\|Y(t_{k+1})-Y(t_k)\|^2\leq \varepsilon^2(t-s)+\varsigma_1(t-s)\cdot (t-s).
	\end{equation}
	
	Now, let us consider the term $\langle x'(t_{k+1})-x'(t_k),\ell(x(t_k),Y(t_k))$. To evaluate it, recall that $x'(\cdot)$ satisfies  differential equation (\ref{eq:f_rd}). 
	We have that
	\begin{equation}\label{equality:x_r_f}
	\begin{split}
	\langle x'(t_{k+1})-x'(&t_k),\ell(x_*,Y(t_k))\rangle\\&=\left\langle\int_s^t\int_V  f(\tau,x'(\tau),m(\tau),u_*,v)\zeta(dv|\tau)d\tau,\ell(x_*,Y(t_k))\right\rangle.
	\end{split}
	\end{equation}
	Further, the continuity of $f$  and the Lipschitz continuity with respect to $x$ and $m$ yields
	\[\begin{split}
	\|f(\tau,x'(\tau),&m(\tau),u_*,v)-f(t_k,x_*,m(t),u_*,v)\|\\&\leq \varsigma_f(t_{k+1}-t_k)+L\|x'(\tau)-x'(t_k)\|+LW_2(m(\tau),m(t_k)),	\end{split}
	\] when $t\in [t_k,t_{k+1}]$. This and inequalities  $\|x'(\tau)-x'(t_k)\|\leq R|\tau-t_k|$ (see (\ref{intro:R})), $W_2(m(\tau),m(t_k))\leq R|\tau-t_k|$ (see Lemma~\ref{lm:deterministic_motion_estimate}) imply
\[
\|f(\tau,x'(\tau),m(\tau),u_*,v)-f(t_k,x_*,m(t),u_*,v)\|\leq \varsigma_f(\tau-t_k)+2LR(\tau-t_k).	
\] Plugging this estimate into (\ref{equality:x_r_f}), we get the following:
\begin{equation}\label{lemma_extremal:ineq:x_r_f_in_s}
\begin{split}
\Bigl|\langle x'(t_{k+1})-x'(t_k),\ell(x_*,Y(t_k))&\rangle\\-\Bigl\langle\int_s^t\int_V  f(t_k,x_*,&m(t_k),u_*,v)\zeta(dv|\tau)d\tau,\ell(x_*,Y(t_k))\Bigr\rangle\Bigr|\\\leq \sqrt{d} [\varsigma_f(t_{k+1}-t_k)+LR(&t_k-t_{k+1})]\cdot (t_{k+1}-t_k).
\end{split}
\end{equation}
	
Next, we evaluate the term $\mathbb{E}\langle \ell(Y(t_{k+1}),Y(t_k)),\ell(x_*,Y(t_k))\rangle$. Recall that $Y(t_k)=\bar{y}_*$ $P$-a.s. Furthermore,  the function $t\mapsto\eta^{x_*,\bar{y}_*}_{k,\mathcal{S}}(t)=(\eta^{x_*,\bar{y}_*}_{k,\bar{x}}(t))_{\bar{x}\in\mathcal{S}}$ is such that $\eta^{x_*,\bar{y}_*}_{k,\bar{x}}(t)\triangleq P(Y(\tau)=\bar{x})$ and  satisfies ODE (\ref{eq:nu}) with initial condition  $\nu_{*}=\mathbb{1}_{\bar{y}_*}$. Thus, since $Q(t,\mu,u,v)$ is defined on the compact space, we have that
\begin{equation}\label{ineq:nu_nu_star}
\|\eta^{x_*,\bar{y}_*}_k(t)-\mathbb{1}_{\bar{y}_*}\|_2\leq C_3(t-t_k),
\end{equation} where $C_3$ is a constant dependent only on $Q$.

Using the fact that  the process (\ref{property:phi_martinglate}) is the $\{\mathcal{F}_t\}_{t\in[s,r]}$-martingale  for the function $\phi(x)\triangleq \langle \ell(x,\bar{y}_*),\ell(x_*,\bar{y}_*)\rangle$ when the controls $\gamma_{k,\mathcal{S}}$ and $\vartheta_{k,\mathcal{S}}^{x_*,\bar{y}_*}$ are used, we conclude that 
\[\begin{split}
 \mathbb{E}\langle \ell(Y(t_{k+1})&,Y(t_k)),\ell(x_*,Y(t_k))\rangle \\&=\mathbb{E}\int_{t_k}^{t_{k+1}} \Lambda^{\mathcal{Q}}_\tau[\mu(\tau),\gamma_{k,\mathcal{S}}(\tau),\vartheta_{k,\mathcal{S}}^{x_*,\bar{y}_*}]\Bigl \langle\ell(\cdot,\bar{y}_*),\ell(x_*,\bar{y}_*)\Bigr\rangle (Y(\tau)) d\tau.\end{split}
\] The definition of the generator $\Lambda_t^\mathcal{Q}$ (see (\ref{intro:generator_Markov})) implies that
\[ \begin{split}
\Lambda^{\mathcal{Q}}_\tau&[\mu(\tau),\gamma_{k,\mathcal{S}}(\tau),\vartheta_{k,\mathcal{S}}^{x_*,\bar{y}_*}]\langle\ell(\cdot,\bar{y}_*),\ell(x_*,\bar{y}_*)\rangle(\bar{x})\\&=
\left\langle\sum_{\bar{y}\in\mathcal{S}}(\ell(\bar{y},\bar{y}_*)-\ell(\bar{x},\bar{y}_*))\mathcal{Q}_{\bar{x},\bar{y}}(\tau,\mu(\tau),\gamma_{k,\mathcal{S}}(\tau),\vartheta_{k,\mathcal{S}}^{x_*,\bar{y}_*}),\ell(x_*,\bar{y}_*)\right\rangle
\end{split}
\] Hence,
\begin{equation}\label{equality:Expectation_ell}
\begin{split}
\mathbb{E}&\langle \ell(Y(t_{k+1}),Y(t_k)),\ell(x_*,Y(t_k))\rangle\\&=\int_{t_k}^{t_{k+1}}\Bigl\langle\sum_{\bar{x}\in \mathcal{S}}\eta^{x_*,\bar{y}_*}_{k,\bar{x}}(\tau)\sum_{\bar{y}\in\mathcal{S}}(\ell(\bar{y},\bar{y}_*)-\ell(\bar{x},\bar{y}_*))\mathcal{Q}_{\bar{x},\bar{y}}(\tau,\mu(\tau),\gamma_{k,\mathcal{S}}(\tau),\vartheta_{k,\mathcal{S}}^{x_*,\bar{y}_*}),\ell(x_*,\bar{y}_*)\Bigr\rangle.
\end{split}
\end{equation}

Notice that, from (\ref{assumption:ineq:drift}) and the fact that $\vartheta_{k,\bar{y}_*}^{x_*,\bar{y}_*}=\delta_{v_*}$, it follows, that, if $\bar{x}=\bar{y}_*$, 
\begin{equation}\label{ineq:dynamic_Markov_f}
\begin{split}
\Bigl|\sum_{\bar{y}\in\mathcal{S}}(\ell(\bar{y},\bar{y}_*)-\ell&(\bar{x},\bar{y}_*))\mathcal{Q}_{\bar{x},\bar{y}}(\tau,\mu(\tau),\gamma_{k,\mathcal{S}}(\tau),\vartheta_{k,\mathcal{S}}^{x_*,\bar{y}_*})\\&-\int_Uf(\tau,\bar{y}_*,\widetilde{\mu(\tau)},u,v_*)\gamma_{k,\bar{y}_*}(\tau,du)\Bigr|\leq \varepsilon.
\end{split}
\end{equation}

Furthermore, (\ref{ineq:nu_nu_star}) gives that $|\eta^{x_*,\bar{y}_*}_{k,\bar{y}_*}(\tau)-1|\leq C_3(\tau-s).$

If $\bar{x}\neq \bar{y}_*$, then, as above, $\eta^{x_*,\bar{y}_*}_{k,\bar{x}}(\tau)\leq C_3(\tau-s)$. Additionally,  $\|\ell(x,y)\|\leq \sqrt{d}$ on $\td\times\td$, when the functions $Q_{\bar{x},\bar{y}}$ are bounded. Thus,
\[
\begin{split}
\Bigl|\sum_{\bar{x}\in\mathcal{S}}\eta^{x_*,\bar{y}_*}_{k,\bar{x}}(\tau)\sum_{\bar{y}\in \mathcal{S}} (\ell(\bar{y},\bar{y}_*)-\ell(&\bar{x},\bar{y}_*))\mathcal{Q}_{\bar{x},\bar{y}}(\tau,\mu(\tau), \gamma_{k,\bar{x}}(\tau),\vartheta_{k,\bar{x}}^{x_*,\bar{y}_*})\\ &-\sum_{\bar{y}\in \mathcal{S}} \ell(\bar{y},\bar{y}_*)\mathcal{Q}_{\bar{y}_*,\bar{y}}(\tau,\mu(\tau),\gamma_{k,\bar{y}_*}(\tau), \vartheta_{k,\bar{y}_*}^{x_*,\bar{y}_*})\Bigr| \\ \leq 
|\eta^{x_*,\bar{y}_*}_{k,\bar{y}_*}(\tau)-1|\sum_{\bar{y}\in \mathcal{S}}|&\ell(\bar{y},\bar{y}_*)|\cdot |\mathcal{Q}_{\bar{y}_*,\bar{y}}(\tau,\mu(\tau),\gamma_{k,\bar{y}_*}(\tau),\vartheta_{k,\bar{y}_*}^{x_*,\bar{y}_*})|\\+
\sum_{\bar{x}\neq\bar{y}_*}|\eta^{x_*,\bar{y}_*}_{k,\bar{x}(\tau)}|\sum_{\bar{y}\in \mathcal{S}} &|\ell(\bar{y},\bar{y}_*)-\ell(\bar{x},\bar{y}_*)|\cdot |\mathcal{Q}_{\bar{x},\bar{y}}(\tau,\mu(\tau), \gamma_{k,\bar{x}}(\tau),\vartheta_{k,\bar{x}}^{x_*,\bar{y}_*})|.
\end{split}
\] The right-hand side of this inequality can be estimated by $C_4|\tau-s|$, where $C_4$ is a constant determined by the matrix $Q$. This and (\ref{ineq:dynamic_Markov_f}) yield that
\[
\begin{split}
\Bigl|\Bigl\langle\sum_{\bar{x}\in \mathcal{S}}\eta^{x_*,\bar{y}_*}_{k,\bar{x}}&(\tau)\sum_{\bar{y}\in\mathcal{S}}(\ell(\bar{y},\bar{y}_*)-\ell(\bar{x},\bar{y}_*))\mathcal{Q}_{\bar{x},\bar{y}}(\tau,\mu(\tau),\gamma_{k,\bar{x}}(\tau),\vartheta_{k,\bar{x}}^{x_*,\bar{y}_*}),\ell(x_*,\bar{y}_*)\Bigr\rangle\\ -\Bigl\langle  &\int_Uf(\tau,\bar{y}_*,\widetilde{\mu(\tau)},u,v_*)\gamma_{k,\bar{y}_*}(\tau,du),\ell(x_*,\bar{y}_*) \Bigr\rangle\Bigr|\leq \varepsilon|\ell(x_*,\bar{y}_*)|+C_4\sqrt{d}(\tau-t_k).
\end{split}
\] Recall that we denote the modulus of continuity of $f$ w.r.t. time variable by $\varsigma_f$. Furthermore, $W_2(\widetilde{\mu(t_k)},\widetilde{\mu(\tau)})\leq R_1(\tau-t_k)^{1/2}$. Hence, 
\[
\begin{split}
\Bigl|\Bigl\langle\sum_{\bar{x}\in \mathcal{S}}\eta^{x_*,\bar{y}_*}_{k,\bar{x}}&(\tau)\sum_{\bar{y}\in\mathcal{S}}(\ell(\bar{y},\bar{y}_*)-\ell(\bar{x},\bar{y}_*))\mathcal{Q}_{\bar{x},\bar{y}}(\tau,\mu(\tau),\gamma_{k,\bar{x}}(\tau),\vartheta_{k,\bar{x}}^{x_*,\bar{y}_*}),\ell(x_*,\bar{y}_*)\Bigr\rangle\\ -\Bigl\langle  &\int_Uf(t_k,\bar{y}_*,\widetilde{\mu(t_k)},u,v_*)\gamma_{k,\bar{y}_*}(\tau,du),\ell(x_*,\bar{y}_*) \Bigr\rangle\Bigr|\leq \varepsilon|\ell(x_*,\bar{y}_*)|+\varsigma'_2(\tau-t_k).
\end{split}
\] Here $\varsigma'_2(\theta)\triangleq C_4\sqrt{d}\theta+\varsigma_f(\theta)+LR_1\theta^{1/2}$ is  such that $\varsigma'_2(\theta)\rightarrow 0$ as $\theta\rightarrow 0$.

Combining the last estimate with (\ref{equality:Expectation_ell}), we arrive at the following estimate:
\begin{equation}\label{lemma_extremal:ineq:expectation_Y_r_Y_s_f}
\begin{split}
\Bigl|\mathbb{E}\langle \ell(Y(t_{k+1}),Y(t_k)),&\ell(x_*,Y(t_k))\rangle\\ -&\int_{t_k}^{t_{k+1}}\Bigl\langle  \int_Uf(t_k,\bar{y}_*,\widetilde{\mu(t_k)},u,v_*)\gamma_{k,\bar{y}_*}(\tau,du)d\tau,\ell(x_*,\bar{y}_*) \Bigr\rangle\Bigr|\\\leq \frac{\varepsilon^2}{2}&(t_{k+1}-t_k)+\frac{\|x_*-\bar{y}_*\|^2}{2}(t_{k+1}-t_k)+\varsigma'_2(t_{k+1}-t_k)(t_{k+1}-t_k).
\end{split}
\end{equation}

Now, turn back to inequality (\ref{lemma_extremal:ineq:x_r_Y_r}). Expectation of its right-hand side is evaluated in~(\ref{lemma_extremal:ineq:x_prime_squared}),~(\ref{lemma_extremal:ineq:expectation_Y_r_s}),~(\ref{lemma_extremal:ineq:x_r_f_in_s}),~(\ref{lemma_extremal:ineq:expectation_Y_r_Y_s_f}). They imply the following estimate:
\begin{equation}\label{lemma_extremal:ineq:main_ineq:pre_lipschitz}
\begin{split}
\mathbb{E}\|x(t_{k+1})-Y(t_{k+1})\|^2\leq &\|x_*-y_*\|+2\varepsilon^2(t_{k+1}-t_k)+\|x_*-\bar{y}_*\|^2(t_{k+1}-t_k)\\& +2\int_{t_k}^{t_{k+1}}\int_V  \langle f(t_k,x_*,m(t_k),u_*,v),\ell(x_*,\bar{y}_*)\rangle\zeta(dv|\tau)d\tau\\&-2\int_{t_k}^{t_{k+1}} \int_U\langle f(t_k,\bar{y}_*,\widetilde{\mu(t_k)},u,v_*),\ell(x_*,\bar{y}_*)\rangle\gamma_{k,\bar{y}_*}(\tau,du)d\tau \\&+\varsigma_2(t_{k+1}-t_k)\cdot(t_{k+1}-t_k), 
\end{split}
\end{equation} where 
\begin{equation}\label{intro:varsigma_2}
\varsigma_2(\theta)\triangleq R\theta+\varsigma_1(\theta)+2[\varsigma_f(\theta)+LR\theta]+2\varsigma'_2(\theta).
\end{equation}

Since $f$ is Lipschitz continuous w.r.t. to $x$ and $m$ we have that
\[
\begin{split}
|\langle f(t_k,\bar{y}_*,\widetilde{\mu(t_k)},u,v_*),&\ell(x_*,\bar{y}_*)\rangle-\langle f(t_k,x_*,m(t_k),u,v_*),\ell(x_*,\bar{y}_*)\rangle|\\ &\leq L\|x_*-\bar{y}_*\|^2+LW_2(m(t_k),\widetilde{\mu(t_k)})\|x_*-\bar{y}_*\|\\&\leq \frac{3}{2}L\|x_*-\bar{y}_*\|^2+\frac{1}{2}LW_2^2(m(t_k),\widetilde{\mu(t_k)}).
\end{split}
\]

Using this, we can estimate the right-hand side of (\ref{lemma_extremal:ineq:main_ineq:pre_lipschitz}) as follows:
\begin{equation}\label{lemma_extremal:ineq:main_ineq:after_lipschitz}
\begin{split}
\mathbb{E}\|x(t_{k+1})-&Y(t_{k+1})\|^2\\\leq &\|x_*-\bar{y}_*\|+2\varepsilon^2(t_{k+1}-t_k)+(1+3L)\|x_*-\bar{y}_*\|^2(t_{k+1}-t_k)\\&+LW_2^2(m(t_k),\widetilde{\mu(t_k)})(t_{k+1}-t_k)\\&+2\int_{t_k}^{t_{k+1}}\int_V  \langle f(t_k,x_*,m(t_k),u_*,v),\ell(x_*,\bar{y}_*)\rangle\zeta(dv|\tau)d\tau\\&-2\int_{t_k}^{t_{k+1}} \int_U\langle f(t_k,x_*,m(t_k),u,v_*),\ell(x_*,\bar{y}_*)\rangle\gamma_{k,\bar{y}_*}(\tau,du)d\tau \\&+\varsigma_2(t_{k+1}-t_k)\cdot(t_{k+1}-t_k), 
\end{split}
\end{equation}
The choice of the controls $u_*$ and $v_*$ (see (\ref{intro:u_hat}), (\ref{intro:v_hat}),  (\ref{intro:u_star_def}), (\ref{intro:v_star_def})) gives that, for any $u\in U$, $v\in V$,
\[\langle f(t_k,x_*,m(t_k),u_*,v),\ell(x_*,\bar{y}_*)\rangle-\langle f(t_k,x_*,m(t_k),u,v_*),\ell(x_*,\bar{y}_*)\rangle\leq 0.
\] This and (\ref{lemma_extremal:ineq:main_ineq:after_lipschitz}) imply (\ref{ineq:expectation_X_Y}).  Simultaneously, very definition of the function $\varsigma_2$ implies that $\varsigma_2(\epsilon)\rightarrow 0$ as $\epsilon\rightarrow 0$.

\end{proof}

The following lemma provides the estimate between $\mu(\cdot)$ and $\eta(\cdot)$ defined in~\ref{strategy_intro:step}.
\begin{lemma}\label{lm:mu_eta_estiame} Let $\mu(\cdot)$ solve (\ref{eq:mu_control_s_r}) on each interval $[t_k,t_{k+1}]$ with $\vartheta_{k,\mathcal{S}}$ given by (\ref{intro:zeta_k})  and let $\eta(\cdot)$ be defined by (\ref{intro:eta_final}). Then there exists a constant $C_5$ determined only by $Q$ such that
	\begin{equation*}\label{ineq:mu_eta_t}
	\|\mu(t)-\eta(t)\|_2\leq C_5d(\Delta).
	\end{equation*} 
\end{lemma}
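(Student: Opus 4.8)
The claim is that the model motion $\mu(\cdot)$ (which obeys $\dot\mu = \mu\mathcal{Q}(t,\mu,\gamma_{k,\mathcal{S}},\vartheta_{k,\mathcal{S}})$ on each $[t_k,t_{k+1}]$) stays $O(d(\Delta))$-close in the $\|\cdot\|_2$ metric to the averaged guide $\eta(\cdot)$ built in \ref{strategy_intro:step}.

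The first thing I would do is rewrite $\eta(\cdot)$ on a generic interval $[t_k,t_{k+1}]$ as a genuine solution of a Kolmogorov ODE. By construction, $\eta(t) = \sum_{\bar y_*}\eta_{\bar y_*}(t_k)\int_{\td}\eta_k^{x_*,\bar y_*}(t)\,\pi_k(dx_*|\bar y_*)$, where each $\eta_k^{x_*,\bar y_*}$ solves \eqref{eq:eta_x_star_bar_y_star} with the SAME first-player control $\gamma_{k,\mathcal{S}}(\cdot)$ used to drive $\mu(\cdot)$, but with the perturbed second-player control $\vartheta_{k,\mathcal{S}}^{x_*,\bar y_*}$ that differs from $\vartheta_{k,\mathcal{S}}$ only in the single coordinate $\bar y_*$. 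Since the Kolmogorov right-hand side $\mu\mapsto \mu\mathcal{Q}(t,\mu,\gamma_{\mathcal{S}},\vartheta_{\mathcal{S}})$ is linear in the state vector and $\vartheta_{\mathcal{S}}\mapsto\mathcal{Q}_{\bar x,\bar y}(t,\mu,\gamma_{\bar x},\vartheta_{\bar x})$ is affine, the average $\eta(t)$ satisfies an ODE of the form $\dot\eta(t) = \eta(t)\widehat{\mathcal{Q}}(t)$ where $\widehat{\mathcal{Q}}(t)$ is a convex combination (over $\bar x, x_*$) of the matrices $\mathcal{Q}(t,\mu(t),\gamma_{k,\mathcal{S}}(t),\vartheta_{k,\mathcal{S}}^{x_*,\bar y_*})$ — in fact, row $\bar x$ of $\widehat{\mathcal{Q}}(t)$ is an average, against a kernel derived from $\pi_k$ and $\eta_k^{\cdot,\cdot}$, of rows that agree with the row $\bar x$ of $\mathcal{Q}(t,\mu(t),\gamma_{k,\mathcal{S}}(t),\vartheta_{k,\mathcal{S}})$ EXCEPT when $\bar x$ happens to equal the base point $\bar y_*$, in which case the second-player control in that row is the Dirac $\delta_{\hat v(t_k,x_*,\bar y_*,m_k)}$ rather than $\vartheta_{k,\bar x}$. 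So $\eta(\cdot)$ and $\mu(\cdot)$ solve the same type of Kolmogorov ODE with the same $\gamma_{k,\mathcal{S}}$, and the only discrepancy between the two driving matrices is: (a) the state argument ($\mu(t)$ in both — so no Lipschitz-in-state error here, good), and (b) the second-player controls differ, which perturbs only finitely many entries of each row and is bounded because all $Q_{\bar x,\bar y}$ are bounded on the compact $[0,T]\times\Sigma\times U\times V$.

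Next, I would run a Grönwall argument for $\Phi(t)\triangleq\|\mu(t)-\eta(t)\|_2$ on $[t_k,t_{k+1}]$. Writing $\dot\mu - \dot\eta = \mu\mathcal{Q}(t,\mu,\gamma_{k,\mathcal{S}},\vartheta_{k,\mathcal{S}}) - \eta\widehat{\mathcal{Q}}(t) = (\mu-\eta)\mathcal{Q}(t,\mu,\gamma_{k,\mathcal{S}},\vartheta_{k,\mathcal{S}}) + \eta\bigl(\mathcal{Q}(t,\mu,\gamma_{k,\mathcal{S}},\vartheta_{k,\mathcal{S}}) - \widehat{\mathcal{Q}}(t)\bigr)$, the first term is controlled by $K\Phi(t)$ with $K$ depending only on $\sup|Q_{\bar x,\bar y}|$ and $|\mathcal{S}|$; the second term is a forcing term whose norm I must bound. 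The forcing matrix $\mathcal{Q}(t,\mu(t),\gamma_{k,\mathcal{S}},\vartheta_{k,\mathcal{S}}) - \widehat{\mathcal{Q}}(t)$ differs from zero only through the Dirac-substitution described above, hence is bounded in norm by a constant $C$ depending only on $Q$ and $\mathcal{S}$ — crucially, it is bounded uniformly, NOT small. But the key point is the initial condition: at $t=t_k$ we have $\eta(t_k) = \sum_{\bar y_*}\eta_{\bar y_*}(t_k)\mathbb{1}_{\bar y_*} = \eta(t_k)$ so the recursion is consistent and $\mu$ and $\eta$ start the very first interval at the common value $\operatorname{pr}_{\mathcal{S}}(m_0)$; thus $\Phi(t_0)=0$, and Grönwall gives $\Phi(t)\le \frac{C}{K}(e^{K(t-t_0)}-1)\le \frac{C}{K}(e^{KT}-1)$ — which is a constant, not $O(d(\Delta))$. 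So a naive Grönwall is NOT enough, and this is the main obstacle.

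The resolution — and the part I expect to be delicate — is that the forcing term is not merely bounded but is in fact \emph{small in an integrated sense}, because the Dirac substitution in row $\bar y_*$ only matters while $\eta_k^{x_*,\bar y_*}(\cdot)$ still has most of its mass at $\bar y_*$; more importantly, the correct way to see the $O(d(\Delta))$ bound is that on each interval $\mu(t_{k+1})$ and $\eta(t_{k+1})$ are driven toward the \emph{same} limit object (a probability close to $\widetilde{\mu(t_k)}$ weighted against $\pi_k$), and the per-step discrepancy is $O((t_{k+1}-t_k)^2)$ rather than $O(t_{k+1}-t_k)$, because at the start of the interval $\eta_k^{x_*,\bar y_*}(t_k)=\mathbb{1}_{\bar y_*}$ already matches the structure. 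Concretely: the forcing matrix acts on $\eta(t)$, and the component of $\eta(t)$ in the rows where $\vartheta$ was altered is $O(t-t_k)$ away from having collapsed to the nominal behaviour, so $\|\eta(t)(\mathcal{Q}-\widehat{\mathcal{Q}})\|_2 \le C'\cdot(t-t_k)$ pointwise. Integrating, the forcing contributes $O((t_{k+1}-t_k)^2)$ per interval, while the homogeneous part only amplifies by $e^{K(t_{k+1}-t_k)} = 1 + O(t_{k+1}-t_k)$; summing the arithmetic-geometric recursion $\Phi(t_{k+1}) \le e^{K(t_{k+1}-t_k)}\Phi(t_k) + C'(t_{k+1}-t_k)^2$ over $k=0,\dots,N-1$ with $\Phi(t_0)=0$ telescopes to $\Phi(t)\le C'e^{KT}\sum_k(t_{k+1}-t_k)^2 \le C'e^{KT}\,d(\Delta)\sum_k(t_{k+1}-t_k) = C'e^{KT}T\,d(\Delta)$, giving the claimed bound with $C_5 \triangleq C'e^{KT}T$ depending only on $Q$ (and $T$, $\mathcal{S}$). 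The technical heart is therefore establishing the pointwise estimate $\|\eta(t)\bigl(\mathcal{Q}(t,\mu(t),\gamma_{k,\mathcal{S}},\vartheta_{k,\mathcal{S}}) - \widehat{\mathcal{Q}}(t)\bigr)\|_2 \le C'(t-t_k)$ on $[t_k,t_{k+1}]$, which follows from \eqref{ineq:nu_nu_star}-type estimates on how fast $\eta_k^{x_*,\bar y_*}$ leaves $\mathbb{1}_{\bar y_*}$ combined with boundedness of the $Q_{\bar x,\bar y}$; I would spell that out and then assemble the telescoping Grönwall as above.
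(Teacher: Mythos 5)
Your proposal is correct and follows essentially the same route as the paper: the key mechanism in both is that the exact averaging identity $\mathcal{Q}(t,\mu',\gamma_{\mathcal{S}},\vartheta_{k,\mathcal{S}})=\int_{\td}\mathcal{Q}(t,\mu',\gamma_{\mathcal{S}},\vartheta_{k,\mathcal{S}}^{x_*,\bar{y}_*})\pi_k(dx_*|\bar{y}_*)$ (a consequence of (\ref{intro:beta_k_def}), (\ref{intro:zeta_k}) and linearity of $\mathcal{Q}$ in $\vartheta_{\bar x}$), combined with $\|\eta_k^{x_*,\bar y_*}(t)-\mathbb{1}_{\bar y_*}\|_2\le C_3(t-t_k)$, forces a per-interval discrepancy of order $(t_{k+1}-t_k)^2$, which the arithmetic--geometric telescoping then turns into $O(d(\Delta))$. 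The only cosmetic difference is that you package the per-step estimate as a Gr\"onwall inequality with an $O(t-t_k)$ forcing term, whereas the paper obtains it by a first-order expansion of the solution of the linear Kolmogorov ODE applied separately to each $\eta_k^{x_*,\bar y_*}$ and to $\mu_k^{\bar y_*}$.
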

\begin{proof}
	First, consider $\nu(\cdot)$ satisfying ODE (\ref{eq:nu}) on $[s,r]$. We have that \[\nu(t)=\nu_*\exp\left\{\int_{s}^t\mathcal{Q}(\tau,\mu(\tau),\gamma_{\mathcal{S}}(\tau),\vartheta_{\mathcal{S}}(\tau))d\tau.\right\}\] Expanding the exponent and using the fact that  the matrices $Q(t,\mu,u,v)$ (and, thus, $\mathcal{Q}(t,\mu,\gamma_{\mathcal{S}},\vartheta_{\mathcal{S}})$) are uniformly bounded, we conclude that
	\begin{equation}\label{ineq:nu_linear}
	\left\|\nu(r)-\nu(s)- \nu(s)\int_s^r\mathcal{Q}(\tau,\mu(\tau),\gamma_{\mathcal{S}}(\tau),\vartheta_{\mathcal{S}}(\tau))d\tau\right\|_2\leq C'_5(r-s)^2.
	\end{equation} Here $C'_5$ is a constant  (certainly, dependent on $Q$).
	
	Now let $x_*\in\td$, whereas $\bar{y}_*\in \mathcal{S}$. Recall that $\eta^{x_*,\bar{y}_*}_{k}$ solves (\ref{eq:eta_x_star_bar_y_star}). Estimate (\ref{ineq:nu_linear}) implies that
	\[\left\|\eta^{x_*,\bar{y}_*}_{k}(t_{k+1})-\mathbb{1}_{\bar{y}_*}- \mathbb{1}_{\bar{y}_*}\int^{t_{k+1}}_{t_k} \mathcal{Q}(\tau,\mu(\tau),\gamma_{k,\mathcal{S}}(\tau),\vartheta_{k,\mathcal{S}}^{x_*,\bar{y}_*}) d\tau \right\|_2\leq C'_5(t_{k+1}-t_k)^2.\]
	Notice that the very definitions of $\vartheta_{k,\mathcal{S}}$ and  $\vartheta_{k,\mathcal{S}}^{x_*,\bar{y}_*}$  (see (\ref{intro:zeta_k}) and (\ref{intro:zeta_k_x_bar_y_bar_star})) yield that, for every $t\in [0,T]$, $\mu'\in\Sigma$, $\gamma_{\mathcal{S}}\in\mathcal{U}_{\text{instant}}^{\mathcal{S}}$, $\bar{y}_*\in\mathcal{S}$,
	\[\mathcal{Q}(t,\mu',\gamma_{\mathcal{S}},\vartheta_{k,\mathcal{S}})= \int_{\td}\mathcal{Q}(t,\mu',\gamma_{\mathcal{S}},\vartheta_{k,\mathcal{S}}^{x_*,\bar{y}_*})\pi(dx_*|\bar{y}_*).\]   Hence,
	 \begin{equation*}
	 \left\|\eta^{\bar{y}_*}_{k}(t_{k+1})-\mathbb{1}_{\bar{y}_*}- \mathbb{1}_{\bar{y}_*}\int_{t_k}^{t_{k+1}}\mathcal{Q}(\tau,\mu(\tau),\gamma_{k,\mathcal{S}}(\tau),\vartheta_{k,\mathcal{S}})d\tau\right\|_2\leq C'_5(t_{k+1}-t_k)^2.
	 \end{equation*} Here $\eta^{\bar{y}_*}_{k}(\cdot)$ is defined by (\ref{intro:eta_bar_y_star}). 
	Now, let $\mu_k^{\bar{y}_*}(\cdot)$ solve (\ref{eq:nu}) on $[t_k,t_{k+1}]$ with $\mu_k^{\bar{y}_*}(t_k)=\mathbb{1}_{\bar{y}_*}$ and controls $\gamma_{k,\mathcal{S}}$ and $\vartheta_{k,\mathcal{S}}$.
	Due to (\ref{ineq:nu_linear}), we have that
	\[\left\|\mu_k^{\bar{y}_*}(t_{k+1})-\mathbb{1}_{\bar{y}_*}-\mathbb{1}_{\bar{y}_*} \int_{t_k}^{t_{k+1}}\mathcal{Q}(\tau,\mu(\tau),\gamma_{k,\mathcal{S}}(\tau),\vartheta_{k,\mathcal{S}})d\tau\right\|_2\leq C'_5(t_{k+1}-t_k)^2. \] Therefore,
	\begin{equation}\label{ineq:eta_mu_t_k_step}
	\|\eta_k^{\bar{y}_*}(t_{k+1})-\mu_k^{\bar{y}_*}(t_{k+1})\|_2\leq 2 C'_5(t_{k+1}-t_k)^2.
	\end{equation} Now recall that
	\[\eta(t)=\sum_{\bar{y}_*\in \mathcal{S}}\eta_{k,\bar{y}_*}(t_k)\eta_k^{\bar{y}_*}(t),\ \ \mu(t)=\sum_{\bar{y}_*\in \mathcal{S}}\mu_{\bar{y}_*}(t_k)\mu_k^{\bar{y}_*}(t).\]
	Thus,
	\begin{equation}\label{ineq:eta_mu_k_plus_eta_mu_k}
	\begin{split}
	\|\eta(t_{k+1})-\mu(t_{k+1}&)\|_2\\
	\leq \Bigl\|\sum_{\bar{y}_*\in \mathcal{S}}&\eta_{k,\bar{y}_*}(t_k)\eta_k^{\bar{y}_*}(t_{k+1})-\sum_{\bar{y}_*\in \mathcal{S}}\eta_{k,\bar{y}_*}(t_k)\mu_k^{\bar{y}_*}(t_{k+1})\Bigr\|_2\\
	&+ \Bigl\|\sum_{\bar{y}_*\in \mathcal{S}}\eta_{k,\bar{y}_*}(t_k)\mu_k^{\bar{y}_*}(t_{k+1})-\sum_{\bar{y}_*\in \mathcal{S}}\mu_{\bar{y}_*}(t_k)\mu_k^{\bar{y}_*}(t_{k+1})\Bigr\|_2
	\end{split}
	\end{equation} 
	
	The first term in the right-hand side of this inequality can be estimated as follows:
	\[\begin{split}
	\Bigl\|\sum_{\bar{y}_*\in \mathcal{S}}\eta_{k,\bar{y}_*}&(t_k)\eta_k^{\bar{y}_*}(t_{k+1})-\sum_{\bar{y}_*\in \mathcal{S}}\eta_{k,\bar{y}_*}(t_k)\mu_k^{\bar{y}_*}(t_{k+1})\Bigr\|_2 \\ &\leq  \sum_{\bar{y}_*\in\mathcal{S}}\eta_{k,\bar{y}_*}(t_k)\|\eta_k^{\bar{y}_*}(t_{k+1})-\mu_k^{\bar{y}_*}(t_{k+1})\|_2\\ &\leq \sum_{\bar{y}_*\in\mathcal{S}}\eta_{k,\bar{y}_*}(t_k)\cdot 2C_5'(t_{k+1}-t_k)^2=2C_5'(t_{k+1}-t_k)^2.\end{split}\] In the last inequality above we used estimate (\ref{ineq:eta_mu_t_k_step}). 
	
	To evaluate the second term, notice that 
	the functions \[t\mapsto \sum_{\bar{y}_*\in \mathcal{S}}\eta_{k,\bar{y}_*}(t_k)\mu_k^{\bar{y}_*}(t)\text{ and }t\mapsto \sum_{\bar{y}_*\in \mathcal{S}}\mu_{\bar{y}_*}(t_k)\mu_k^{\bar{y}_*}(t)\] satisfy (\ref{eq:nu}) with the initial conditions at the time $t_k$  $\nu_*=\eta(t_k)$ and $\nu_*=\mu(t_k)$ respectively. Thus, there exists a constant $C_5''$ such that
	\[\Bigl\|\sum_{\bar{y}_*\in \mathcal{S}}\eta_{k,\bar{y}_*}(t_k)\mu_k^{\bar{y}_*}(t)- \sum_{\bar{y}_*\in \mathcal{S}}\mu_{\bar{y}_*}(t_k)\mu_k^{\bar{y}_*}(t)\Bigr\|_2\leq \|\eta(t_k)-\mu(t_k)\|_2e^{C_5''(t_{k+1}-t_k)}.\] As above, $C_5''$ is a constant determined only by $Q$. Using (\ref{ineq:eta_mu_k_plus_eta_mu_k}), we conclude that
	\[\|\eta(t_{k+1})-\mu(t_{k+1})\|_2\leq 2C_5'(t_{k+1}-t_k)^2+\|\eta(t_{k})-\mu(t_{k})\|_2e^{C_5''(t-t_k)}.\]
	
	From this and the fact that $\eta(t_0)=\mu(t_0)$ one can derive the estimate
	\[\|\eta(t_{k})-\mu(t_{k})\|_2\leq 2C_5'e^{C_5''(t_k-t_0)}(t_k-t_0)d(\Delta).\] This gives the conclusion of the lemma with $C_5\triangleq 2C_5'e^{C_5''T}T$.
\end{proof}

\begin{lemma}\label{lm:extremal_shift_m_mu}  
The following inequality holds for every $k=0,\ldots,N-1$:
\[\begin{split}
W_2^2(m(t_{k+1}),\widetilde{\eta(t_{k+1})})\leq W_2^2(m(t_k)&,\widetilde{\eta(t_k)})(1+(4L+1)(t_{k+1}-t_k))\\&+2\varepsilon^2(t_{k+1}-t_k)+\varsigma_3(d(\Delta))\cdot(t_{k+1}-t_k).
\end{split}\] Here $\varsigma_3(\cdot)$ is   determined only by $f$, $Q$, $T$ and has the zero limit at $0$.

\end{lemma}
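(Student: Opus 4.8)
Here is how I would approach proving Lemma~\ref{lm:extremal_shift_m_mu}.

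The plan is to feed the two-point estimate of Lemma~\ref{lm:extremal_shift_two_point} into the already-derived bound (\ref{ineq:distance_X_x_star_y_bar_eta_t_k_plus}) and then to replace the auxiliary quantity $W_2^2(m(t_k),\widetilde{\mu(t_k)})$ appearing there by $W_2^2(m(t_k),\widetilde{\eta(t_k)})$, paying an error that vanishes with $d(\Delta)$; this substitution is forced because the recursion has to be closed in terms of the distance between the real flow $m(\cdot)$ and its guide $\eta(\cdot)$, not $\mu(\cdot)$.

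First I would recall that the right-hand side of (\ref{ineq:distance_X_x_star_y_bar_eta_t_k_plus}) is the integral of $W_2^2(\delta_{X^{x_*,\bar{y}_*}_k(t_{k+1},\zeta)},\widetilde{\eta_k^{x_*,\bar{y}_*}(t_{k+1})})$ against $\varkappa_k(d\zeta\mid x_*,u^\natural_k(x_*,\bar{y}_*))\pi_k(d(x_*,\bar{y}_*))$, where $\pi_k$ is an optimal plan between $m(t_k)$ and $\widetilde{\eta(t_k)}$. Lemma~\ref{lm:extremal_shift_two_point} bounds this integrand, uniformly in $\zeta$, by $\|x_*-\bar{y}_*\|^2(1+(3L+1)(t_{k+1}-t_k))$ plus the quantity $LW_2^2(m(t_k),\widetilde{\mu(t_k)})(t_{k+1}-t_k)+2\varepsilon^2(t_{k+1}-t_k)+\varsigma_2(t_{k+1}-t_k)(t_{k+1}-t_k)$, which is constant in $(x_*,\bar{y}_*,\zeta)$. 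Since each $\varkappa_k(\cdot\mid x_*,u)$ is a probability on $\mathcal{V}$, $\pi_k$ is a probability on $\td\times\mathcal{S}$, and $\int_{\td\times\mathcal{S}}\|x_*-\bar{y}_*\|^2\pi_k(d(x_*,\bar{y}_*))=W_2^2(m(t_k),\widetilde{\eta(t_k)})$ by optimality of $\pi_k$, integrating gives
\[W_2^2(m(t_{k+1}),\widetilde{\eta(t_{k+1})})\leq W_2^2(m(t_k),\widetilde{\eta(t_k)})(1+(3L+1)(t_{k+1}-t_k))+LW_2^2(m(t_k),\widetilde{\mu(t_k)})(t_{k+1}-t_k)+2\varepsilon^2(t_{k+1}-t_k)+\varsigma_2(t_{k+1}-t_k)(t_{k+1}-t_k).\]

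It remains to control $W_2^2(m(t_k),\widetilde{\mu(t_k)})$. I would expand $W_2(m(t_k),\widetilde{\mu(t_k)})\leq W_2(m(t_k),\widetilde{\eta(t_k)})+W_2(\widetilde{\eta(t_k)},\widetilde{\mu(t_k)})$, square, and bound the cross term by $2DW_2(\widetilde{\eta(t_k)},\widetilde{\mu(t_k)})$, where $D$ is the (finite, since $\td$ is compact) diameter of $\ptd$ in $W_2$. Proposition~\ref{prop:metrics}, inequality (\ref{ineq:wasser_R_d}), together with Lemma~\ref{lm:mu_eta_estiame} gives $W_2(\widetilde{\eta(t_k)},\widetilde{\mu(t_k)})\leq C_2\|\eta(t_k)-\mu(t_k)\|_2^{1/2}\leq C_2(C_5d(\Delta))^{1/2}$, so that $W_2^2(m(t_k),\widetilde{\mu(t_k)})\leq W_2^2(m(t_k),\widetilde{\eta(t_k)})+\varsigma_4(d(\Delta))$ for a function $\varsigma_4$ equal to a multiple of $\theta^{1/2}$ plus a multiple of $\theta$, hence $\varsigma_4(\theta)\to 0$ as $\theta\to 0$. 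Substituting this into the displayed inequality turns the coefficient of $W_2^2(m(t_k),\widetilde{\eta(t_k)})$ into $1+((3L+1)+L)(t_{k+1}-t_k)=1+(4L+1)(t_{k+1}-t_k)$, and, using $t_{k+1}-t_k\leq d(\Delta)$ and the monotonicity of $\varsigma_2$, the remaining lower-order terms collapse into $\varsigma_3(d(\Delta))(t_{k+1}-t_k)$ with $\varsigma_3\triangleq L\varsigma_4+\varsigma_2$, which depends only on $f$, $Q$, $\mathcal{S}$, $T$ and satisfies $\varsigma_3(\theta)\to 0$ as $\theta\to 0$. The computation is essentially bookkeeping; the only point requiring a little care is the $\|\cdot\|_2$-to-$W_2$ passage through Proposition~\ref{prop:metrics}, which costs a square root and is responsible for the $d(\Delta)^{1/2}$ term in $\varsigma_4$ — harmless, since it still tends to $0$.
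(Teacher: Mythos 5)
Your proposal is correct and follows essentially the same route as the paper: plug the two-point estimate of Lemma~\ref{lm:extremal_shift_two_point} into (\ref{ineq:distance_X_x_star_y_bar_eta_t_k_plus}), use optimality of $\pi_k$ to identify $\int\|x_*-\bar y_*\|^2\,d\pi_k$ with $W_2^2(m(t_k),\widetilde{\eta(t_k)})$, and then trade $W_2^2(m(t_k),\widetilde{\mu(t_k)})$ for $W_2^2(m(t_k),\widetilde{\eta(t_k)})$ via the triangle inequality, a diameter bound on the cross term, Proposition~\ref{prop:metrics} and Lemma~\ref{lm:mu_eta_estiame}. The resulting $\varsigma_3$ (your $L\varsigma_4+\varsigma_2$, including the $\sqrt{d(\Delta)}$ contribution from the $\|\cdot\|_2$-to-$W_2$ conversion) matches the paper's.
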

\begin{proof}  Plugging the estimate proved in Lemma~\ref{lm:extremal_shift_two_point} into (\ref{ineq:distance_X_x_star_y_bar_eta_t_k_plus}), we conclude that
	\[
	\begin{split}
	W_2^2(m(t_{k+1}),\widetilde{\eta(t_{k+1})}&)\\\leq (1+(3L+1)&(t_{k+1}-t_k))\int_{\td\times\mathcal{S}}\|x_*-\bar{y}_*\|^2\pi_k(d(x_*,\bar{y}_*))\\+L W_2^2(m&(t_{k}),\widetilde{\mu(t_k)})(t_{k+1}-t_k)+2\varepsilon^2 (t_{k+1}-t_k)+\varsigma_2(t_{k+1}-t_k)\cdot(t_{k+1}-t_k)\\ = (1+(3L+1)&(t_{k+1}-t_k))W_2^2(m(t_{k}),\widetilde{\eta(t_{k})})\\+L W_2^2(m&(t_{k}),\widetilde{\mu(t_k)})(t_{k+1}-t_k)+2\varepsilon^2 (t_{k+1}-t_k)+\varsigma_2(t_{k+1}-t_k)\cdot(t_{k+1}-t_k).
	\end{split}\] Here we used the fact that $\pi_k$ is an optimal plan between $m(t_{k})$ and $\widetilde{\eta(t_{k})}$ (see~\ref{intro:varsigma_2}). Then,  notice that $\varsigma_2(t_{k+1}-t_k)\leq \varsigma_2(d(\Delta))$.
	 Further, due to Proposition~\ref{prop:metrics} and Lemma~\ref{lm:mu_eta_estiame} we have that
	 \[\begin{split}
	 W_2^2&(m(t_{k}),\widetilde{\mu(t_{k})})\\ &\leq W_2^2(m(t_{k}),\widetilde{\eta(t_{k})})+W_2^2(\widetilde{\mu(t_{k})},\widetilde{\eta(t_{k})})+2W_2(m(t_{k}),\widetilde{\eta(t_{k})})\cdot W_2^2(\widetilde{\mu(t_{k})},\widetilde{\eta(t_{k})})\\ &\leq 
	 W_2^2(m(t_{k}),\widetilde{\eta(t_{k})})+C_2C_5d(\Delta)+2\sqrt{dC_2C_5}\cdot\sqrt{d(\Delta)}.
	 \end{split}\] Hence we obtain the conclusion of the Lemma with
	\[\varsigma_3(\theta)\triangleq \varsigma_2(\theta)+L\left(C_2C_5\cdot\theta+2\sqrt{dC_2C_5\cdot \theta}\right).\]
\end{proof}

\begin{proof}[Proof of Theorem~\ref{th:existence_strategy}]
Given $m(\cdot)\in \mathcal{X}_1(t_0,m_0,\Delta,\mathfrak{u}^\Delta)$, we obtain the auxiliary motions of the Markov chain $\mu(\cdot)$ and $\eta(\cdot)$. Notice that $\mu(\cdot)$ satisfies
\[\varphi^+(t_{k+1},\mu(t_{k+1}))\leq \varphi^+(t_k,\mu(t_k)).\] Thus, \begin{equation}\label{ineq:varphi_mu}
\varphi^+(T,\mu(T))\leq \varphi^+(t_0,\mu(t_0)).
\end{equation} Applying Lemma~\ref{lm:extremal_shift_m_mu} sequentially, we obtain that
\begin{equation*}\label{ineq:m_eta_0_T}
W_2^2(m(T),\widetilde{\eta(T)})\leq e^{(4L+1)(T-t_0)}[W_2^2(m(t_0),\widetilde{\eta(t_0)})+2\varepsilon^2(T-t_0)+\varsigma_3(d(\Delta))\cdot(T-t_0)].
\end{equation*}
Recall that $\mu(t_0)=\eta(t_0)=\operatorname{pr}_{\mathcal{S}}(m_0)$ (see~\ref{strategy_intro:initial}). Therefore taking into account the fact that $\widetilde{\operatorname{pr}_{\mathcal{S}}(m_0)}$ is the nearest to $m_0$ element of $\mathcal{P}^2(\mathcal{S})$ and assumption (\ref{assumption:ineq:S_td}), we have that
\[W_2^2(m(t_0),\widetilde{\eta(t_0)})\leq \varepsilon^2.\] Therefore,
\begin{equation}\label{ineq:distance_m_eta_T}
\begin{split}
W_2(m(T),\widetilde{\eta(T)})&\leq e^{(2L+1/2)T}[(1+2T)\varepsilon^2+T\varsigma_3(d(\Delta))]^{1/2}\\ &\leq \sqrt{1+2T}e^{(2L+1/2)T}\varepsilon+\sqrt{T\varsigma_3(d(\Delta))}.
\end{split}
\end{equation}

Lemma~\ref{lm:mu_eta_estiame} and Proposition~\ref{prop:metrics} give that 
\[W_2(\widetilde{\eta(T)},\widetilde{\mu(T)})\leq \sqrt{C_2C_5d(\Delta)}.\]

Combining this with (\ref{ineq:distance_m_eta_T}), we arrive at the estimate
\begin{equation}\label{ineq:distance_m_mu_final}
W_2(m(T),\widetilde{\mu(T)})\leq C^*\varepsilon+\varsigma^*(d(\Delta)).
\end{equation} Here, we use the designation introduced in Section~\ref{sect:result}: \[C^*\triangleq\sqrt{1+2T}e^{(2L+1/2)T},\] and additionally put
\[\varsigma^*(\theta)\triangleq \sqrt{T\varsigma_3(\theta)}+\sqrt{C_2C_5\theta}.\] Notice that $\varsigma^*(\theta)\rightarrow 0$ as $\theta\rightarrow 0$. Recall that $\varsigma_g$ is a modulus of continuity of the function~$g$. Estimate (\ref{ineq:distance_m_mu_final}) implies that
\[g(m(T))\leq g(\widetilde{\mu(T)})+\varsigma_g(C^*\varepsilon+\varsigma^*(d(\Delta))).\] To complete the proof of the first part of the theorem it suffices to apply inequality (\ref{ineq:varphi_mu}) and condition~\ref{def:part:supersolution_boundary} that states that
\[g(\widetilde{\mu(T)})\leq \varphi^+(T,\mu(T)).\]

As it was mentioned above, the second part of Theorem~\ref{th:existence_strategy} can be proved by interchanging of the players.

\end{proof}


\section*{Appendix. A property of Wasserstein metric on a simplex}
The purpose of the Appendix is to prove the inequalities between the Wasserstein metric on $\mathcal{P}(\mathcal{S})$ and the $p$-th metric on $\Sigma$ formulated in Proposition~\ref{prop:metrics}. 

\begin{proof}[Proof of Proposition~\ref{prop:metrics}]
Since the set $\mathcal{S}$ is finite we have  that, for  $\pi\in \Pi(\widetilde{\mu^1},\widetilde{\mu^2})$,  there exist nonnegative numbers $b_{\bar{x},\bar{y}}[\pi]$, $\bar{x},\bar{y}\in\mathcal{S}$, such that	
\begin{equation}\label{equality:pi_S_delta}
\pi=\sum_{\bar{x},\bar{y}\in\mathcal{S}}b_{\bar{x},\bar{y}}[\pi]\delta_{\bar{x},\bar{y}}. 
\end{equation}
	
Now let us prove inequality (\ref{ineq:R_d_wasser}). Let $\pi_0\in\Pi(\widetilde{\mu^1},\widetilde{\mu^2})$ be such that
$$W_p^p(\widetilde{\mu^1},\widetilde{\mu^2})=\int_{\mathcal{S}\times\mathcal{S}}\|x-y\|^p\pi_0(d(x,y)). $$  From (\ref{equality:pi_S_delta}) it follows that  
$$W_p^p(\widetilde{\mu^1},\widetilde{\mu^2})=\sum_{\bar{x},\bar{y}\in\mathcal{S}}b_{\bar{x},\bar{y}}[\pi_0]\|\bar{x}-\bar{y}\|^p. $$ Notice that $$\sum_{\bar{y}\in\mathcal{S},\bar{y}\neq \bar{x}}b_{\bar{x},\bar{y}}[\pi_0]\geq (\mu_{\bar{x}}^2-\mu_{\bar{x}}^1)^+, $$ where $a^+$ is equal to $a$ provided that $a$ is positive, and $0$ otherwise. Using this estimate and the definition of the fineness of the lattice $\mathcal{S}$ (see (\ref{intro:fineness_S})), we conclude that
$$W_p^p(\widetilde{\mu^1},\widetilde{\mu^2})\geq \sum_{\bar{x}\in\mathcal{S}}\sum_{\bar{y}\in\mathcal{S}}b_{\bar{x},\bar{y}}[\pi_0](d(\mathcal{S}))^p\geq (d(\mathcal{S}))^p\sum_{\bar{x}\in \mathcal{S}}(\mu_{\bar{x}}^2-\mu_{\bar{x}}^1)^+.  $$ Since $\mu^1,\mu^2\in\Sigma$, we have that
\begin{equation}\label{equality:mu_mu_plus}
\sum_{\bar{x}\in \mathcal{S}}(\mu_{\bar{x}}^2-\mu_{\bar{x}}^1)^+=\frac{1}{2}\sum_{\bar{x}\in \mathcal{S}}|\mu_{\bar{x}}^2-\mu_{\bar{x}}^1|. 
\end{equation} Furthermore, for every $\bar{x}\in\mathcal{S}$, $|\mu_{\bar{x}}^2-\mu_{\bar{x}}^1|\leq 1$. Thus, we have that
$$W_p^p(\widetilde{\mu^1},\widetilde{\mu^2})\geq \frac{(d(\mathcal{S}))^p}{2}\sum_{\bar{x}\in \mathcal{S}}|\mu_{\bar{x}}^2-\mu_{\bar{x}}^1|\geq \frac{(d(\mathcal{S}))^p}{2}\sum_{\bar{x}\in \mathcal{S}}|\mu_{\bar{x}}^2-\mu_{\bar{x}}^1|^p. $$ This proves (\ref{ineq:R_d_wasser}).

To prove inequality (\ref{ineq:wasser_R_d}), choose a probability $\pi'\in\Pi(\widetilde{\mu^1},\widetilde{\mu^2})$ such that $$b_{\bar{x},\bar{x}}[\pi']\triangleq\mu^1_{\bar{x}}\wedge\mu_{\bar{x}}^2. $$ Here $b_{\bar{x},\bar{y}}[\pi']$ are given by representation (\ref{equality:pi_S_delta}). Therefore,
\begin{equation}\label{equality:sum_b_mu}
\sum_{\bar{y}\in\mathcal{S},\bar{y}\neq\bar{x}}b_{\bar{x},\bar{y}}[\pi']=(\mu_{\bar{x}}^2-\mu_{\bar{x}}^2)^+.
\end{equation}

The inclusion $\mathcal{S}\subset\td$ yields 
$$W_p^p(\widetilde{\mu^1},\widetilde{\mu^2})\leq\sum_{\bar{x},\bar{y}\in\mathcal{S}} \|\bar{x}-\bar{y}\|^pb_{\bar{x},\bar{y}}[\pi']\leq d^{p/2}\sum_{\bar{x}\in \mathcal{S}}\sum_{\bar{y}\in\mathcal{S},\bar{y}\neq \bar{x}}b_{\bar{x},\bar{y}}[\pi']. $$ This and equalities (\ref{equality:mu_mu_plus}), (\ref{equality:sum_b_mu}) imply that
$$W_p^p(\widetilde{\mu^1},\widetilde{\mu^2})\leq \frac{d^{p/2}}{2}\sum_{\bar{x}\in \mathcal{S}} |\mu_{\bar{x}}^2-\mu_{\bar{x}}^1|=\frac{d^{p/2}}{2}\|\mu^2-\mu^1\|_1. $$
The Holder's inequality gives that
$$W_p^p(\widetilde{\mu^1},\widetilde{\mu^2})\leq \frac{d^{p/2+p'}}{2}\|\mu^2-\mu^1\|_p, $$ where $p'$ is such that $1/p'+1/p=1$. This proves (\ref{ineq:wasser_R_d})
\end{proof}

\bibliography{mfdg}

\end{document}